\newcommand{\C}{\mathbb{C}}
\newcommand{\na}{\nabla}
\newcommand{\pa}{\partial}
\newcommand{\eps}{\varepsilon}
\newcommand{\Om}{\Omega}
\newcommand{\IOm}{I\times \Om}
\newcommand{\Id}{\operatorname{Id}}
\newcommand{\norm}[1]{\lVert#1\rVert}
\newcommand{\abs}[1]{\lvert#1\rvert}
\newcommand{\Ppol}[1]{\mathcal{P}_{#1}}
\newcommand{\sgn}{\operatorname{sgn}}
\newcommand{\lh}{\abs{\ln{h}}}
\newcommand{\lk}{\ln{\frac{T}{k}}}
\newcommand{\Xk}{X^0_k}
\newcommand{\Xkh}{X^{q,r}_{k,h}}
\newcommand{\vertiii}[1]{{\left\vert\kern-0.25ex\left\vert\kern-0.25ex\left\vert #1
    \right\vert\kern-0.25ex\right\vert\kern-0.25ex\right\vert}}
\definecolor{darkred}{rgb}{.7,0,0}
\definecolor{green}{rgb}{0,0.7,0}
\begin{document}

\title{Discrete maximal parabolic regularity  for  Galerkin finite element methods}

%\titlerunning{Discrete maximal parabolic regularity}        % if too long for running head

\author{Dmitriy Leykekhman         \and
        Boris Vexler %etc.
}

%\authorrunning{Short form of author list} % if too long for running head

\institute{D.~Leykekhman \at
              Department of Mathematics,
               University of Connecticut,
              Storrs,
              CT~06269, USA \\
              \email{leykekhman@math.uconn.edu}           %  \\
           \and
           B.~Vexler \at
              Lehrstuhl f\"ur Optimalsteuerung, Technische Universit\"at M\"unchen,
Fakult\"at f\"ur Mathematik, Boltzmannstra{\ss}e 3, 85748 Garching b. M\"unchen, Germany\\
\email{vexler@ma.tum.de}
}

\date{Received: date / Accepted: date}
% The correct dates will be entered by the editor

\maketitle

\begin{abstract}
The main goal of the paper is to establish time semidiscrete  and space-time fully discrete maximal parabolic regularity for the time discontinuous Galerkin solution  of linear parabolic equations. Such estimates have many applications. They are essential, for example, in establishing optimal a priori error estimates in non-Hilbertian norms  without unnatural coupling of spatial mesh sizes with time steps.
\keywords{maximal parabolic regularity \and finite elements \and maximum norm \and fully discrete \and resolvent estimates \and resolvent estimates \and optimal error estimates \and parabolic smoothing}
% \PACS{PACS code1 \and PACS code2 \and more}
% \subclass{MSC code1 \and MSC code2 \and more}
\end{abstract}

%%%%%%%%%%%%%%%%%%%%%%%%%%%%%%%%%%%%%%%%%%%%%%%%%%%%%%%%%%%%%%%%
\section{Introduction}
Let $\Om$ be a Lipschitz domain in $\mathbb{R}^d$, $d=2,3$ and $I=(0,T)$.
We consider the heat equation as a model of a parabolic second order partial differential equation,
\begin{equation}\label{eq: heat equation}
\begin{aligned}
\pa_tu(t,x)-\Delta u(t,x) &= f(t,x), & (t,x) &\in \IOm,\;  \\
    u(t,x) &= 0,    & (t,x) &\in I\times\pa\Omega, \\
   u(0,x) &= u_0(x),    & x &\in \Omega
\end{aligned}
\end{equation}
with a right-hand side $f \in L^s(I;L^p(\Omega))$ for some $1\le p,s\le \infty$ and $u_0\in L^p(\Om)$,  $1\le p\le \infty$.

The maximal parabolic regularity for $u_0\equiv 0$ says that there exists a constant $C$ such that,
\[
\norm{\pa_t u}_{L^s(I;L^p(\Omega))} + \norm{\Delta u }_{L^s(I;L^p(\Omega))} \le C \, \norm{f}_{L^s(I;L^p(\Omega))}, \quad 1<p,s<\infty,\quad \text{for all}\quad f\in L^s(I;L^p(\Omega)),
\]
(see, e.g.,~\cite{CoulhonT_DuongXT_2000,Haller-DintelmannR_RehbergJ_2009,HieberM_PrussJ_1997}).
The maximal parabolic regularity is an important analytical tool and has a number of applications, especially to nonlinear problems and/or optimal control problems when sharp regularity results are required (cf.~\cite{HombergD_MeyerC_RehbergJ_RingW_2009,KrumbiegelK_RehbergJ_2013a,PieperK_KunischK_VexlerB_2014,LeykekhmanD_VexlerB_2013a}).
Our aim in this paper is to establish similar maximal parabolic regularity results for time discrete discontinuous Galerkin solutions as well as for the fully discrete Galerkin approximations. Such results are very useful, for example, in  fully discrete a  priori error estimates  and are essential in order to keep the spatial mesh size $h$ and the time steps $k$ independent of each other (cf.~\cite{LeykekhmanD_VexlerB_2015a}). In~\cite{LeykekhmanD_VexlerB_2015d} we apply the results of this paper to establish pointwise best approximation estimates for fully discrete Galerkin solutions.

Maximal parabolic regularity with applications to semidiscrete finite element Galerkin solutions in space were analyzed for smooth domains in \cite{MGeissert_2006a,MGeissert_2007a} and for convex polyhedra in \cite{LiB_2015a}. Time discrete results are much less known in the finite element community. Explicit methods are treated in \cite{BlunckS_2001a,BlunckS_2001b,PortalP_2005}. Implicit Euler methods with pointwise norms in time are considered in \cite{GuidettiD_2007b,GuidettiD_2007a}. A more systematic investigation of discrete maximal parabolic regularity for various time schemes was carried out by  Sobolevski\u{i} and Ashyralyev and  summarized in the book \cite{AshyralyevA_SobolevskiuiPE_1994}.

In this paper, we investigate maximal parabolic regularity for a family of time discontinuous Galerkin (dG) methods, which were first deeply analyzed for linear second order parabolic problems in \cite{ErikssonK_JohnsonC_ThomeeV_1985}.
There is a number of important properties that make the dG schemes attractive for temporal discretization of parabolic problems. For example,  such schemes allow for a priori error estimates of optimal order with respect to discretization parameters, such as the size of time steps and the mesh width, as well as with respect to the regularity requirements for the solution (see, e.g.,~\cite{ErikssonK_JohnsonC_1991a,ErikssonK_JohnsonC_1995a}). Different systematic approaches for a posteriori error estimation and adaptivity developed for finite element discretizations can be adapted for dG temporal discretization of parabolic equations, (see, e.g.,~\cite{SchmichM_VexlerB_2008,SchotzauD_WihlerTP_2010}).   Since the trial space allows for discontinuities at the time nodes, the use of different spatial discretizations for each time step can be directly incorporated into the discrete formulation,  (see, e.g.,~\cite{SchmichM_VexlerB_2008}). Compared to the continuous Galerkin methods, dG schemes are not only A-stable but also strongly A-stable, (see, e.g.,~\cite{LasaintP_RaviartPA_1974}). An efficient and easy to implement approach that avoids complex coefficients, which arise in the equations obtained by a direct decoupling for high order dG schemes, was developed in \cite{RichterT_SpringerA_VexlerB_2013}. For the treatment of optimal control problems, Galerkin methods are particularly suitable since they expose an important property that the two approaches optimize-then-discretize, i.e., the discretization of the optimality system built up on the continuous level, and discretize-then-optimize,
i.e., discretization of the state equation and subsequent construction of the optimality system on the discrete level, lead to the same discretization scheme, (see, e.g.,~\cite{BeckerR_MeidnerD_VexlerB_2007a}). Compared to continuous Petrov-Galerkin time-stepping schemes
(see \cite{DMeidner_BVexler_2011a} for details), dG schemes also have the advantage that the adjoint state can use the same discretization as the state variable. This allows for unified numerical treatment and simplifies a priori and a posteriori error analysis, (see, e.g.,~\cite{ChrysafinosK_2007,DMeidner_BVexler_2007a,DMeidner_BVexler_2008a,DMeidner_BVexler_2008b}).

The main results of this paper for the time semidiscrete  discontinuous Galerkin $u_k$ solution consist roughly of two parts. First, for the homogeneous problem (i.e. $f=0$) with $u_0 \in L^p(\Omega)$, $1\le p\le \infty$ we show
\begin{equation}\label{eq: smoothing dgr intro combined}
\|\pa_tu_{k} \|_{L^\infty(I_m; L^p(\Om))}+\|\Delta u_{k}\|_{L^\infty(I_m; L^p(\Om))}+\left\|
\frac{[u_{k}]_{m-1}}{k_m}\right\|_{L^p(\Om)}\le \frac{C}{t_m}\|u_0\|_{L^p(\Om)},
\end{equation}
for $m=1,2,\dots,M$. Then, using this smoothing result, we also establish discrete maximal parabolic regularity for  the inhomogeneous problem when $u_0=0$. We show,
\begin{equation}\label{eq: maximal parabolic dgr intro combined}
\left(\sum_{m=1}^M\|\pa_t u_{k}\|^s_{L^s(I_m;L^p(\Om))}\right)^{\frac{1}{s}}+\|\Delta u_{k}\|_{L^s(I;L^p(\Om))}+\left(\sum_{m=1}^Mk_m\left\|\frac{[u_{k}]_{m-1}}{k_m} \right\|^s_{L^p(\Om)}\right)^{\frac{1}{s}}\le C\lk\|f\|_{L^s(I;L^p(\Om))},
\end{equation}
for  $1\le s\le \infty$ and  $1\le p\le \infty$, with obvious notation changes in the case of $s=\infty.$ In the case of the lowest order piecewise constant method, i.e., $q=0$, the first terms on the left-hand side of the above estimates vanish. In contrast to the continuous case, the limiting cases $s,p \in \{1,\infty\}$ are allowed, which explains the logarithmic factor in~\eqref{eq: maximal parabolic dgr intro combined}.
 We also provide the fully discrete analog of \eqref{eq: smoothing dgr intro combined} and \eqref{eq: maximal parabolic dgr intro combined}.

The rest of the paper is organized as follows. In the next section we introduce the discretization method and the resolvent estimates, which build the main analytical tool of the paper. For better communication of  the ideas we first analyze the dG($0$) method, which is technically much simpler, and in the following section we analyze the general dG($q$) case. That is done  in Sections \ref{sec: dG0} and \ref{sec: dGq}, respectively. At the end of Section \ref{sec: dGq} we provide an example of how such maximal parabolic regularity results can rather easily lead to optimal order error estimates. Finally, Section \ref{sec: fully discrete} is devoted to fully discrete Galerkin solutions. In Section~\ref{sec:general_norms} we provide an extension of our results to the case of a general norm fulfilling a resolvent estimate. This generalization, being of an independent interest, is used, for example, in~\cite{LeykekhmanD_VexlerB_2015d} for derivation of pointwise interior (local) error estimates of fully discrete Galerkin solutions.

\section{Preliminaries} \label{sec: preliminaries}
To introduce the time discontinuous Galerkin discretization for the problem,
 we partition  $I =(0,T)$ into subintervals $I_m = (t_{m-1}, t_m]$ of length $k_m = t_m-t_{m-1}$, where $0 = t_0 < t_1 <\cdots < t_{M-1} < t_M =T$. The maximal and minimal time steps are denoted by $k =\max_{m} k_m$ and $k_{\min}=\min_{m} k_m$, respectively.
We impose the following conditions on the time mesh (as in ~\cite{DMeidner_RRannacher_BVexler_2011a}):
\begin{enumerate}[(i)]
  \item There are constants $c,\beta>0$ independent on $k$ such that
    \[
      k_{\min}\ge ck^\beta.
    \]
  \item There is a constant $\kappa>0$ independent on $k$ such that for all $m=1,2,\dots,M-1$
    \[
    \kappa^{-1}\le\frac{k_m}{k_{m+1}}\le \kappa.
    \]
  \item It holds $k\le\frac{1}{4}T$.
\end{enumerate}
The semidiscrete space $X_k^q$ of piecewise polynomial functions in time is defined by
\[
X_k^q=\{u_{k}\in L^2(I;H^1_0(\Om)) :\ u_{k}|_{I_m}\in \Ppol{q}(H^1_0(\Om)), \ m=1,2,\dots,M \},
\]
where $\Ppol{q}(V)$ is the space of polynomial functions of degree $q$ in time with values in a Banach space $V$.
We will employ the following notation for functions in $X_k^q$
\begin{equation}\label{def: time jumps}
u^+_m=\lim_{\eps\to 0^+}u(t_m+\eps), \quad u^-_m=\lim_{\eps\to 0^+}u(t_m-\eps), \quad [u]_m=u^+_m-u^-_m.
\end{equation}
Next we define the following bilinear form
\begin{equation}\label{eq: bilinear form B}
 B(u,\varphi)=\sum_{m=1}^M \langle \partial_t u,\varphi \rangle_{I_m \times \Omega} + (\na u,\na \varphi)_{\IOm}+\sum_{m=2}^M([u]_{m-1},\varphi_{m-1}^+)_\Om+(u_{0}^+,\varphi_{0}^+)_\Om,
\end{equation}
where $(\cdot,\cdot)_{\Omega}$ and $(\cdot,\cdot)_{I_m \times \Omega}$ are the usual $L^2$ space and space-time inner-products,
$\langle \cdot,\cdot \rangle_{I_m \times \Omega}$ is the duality product between $ L^2(I_m;H^{-1}(\Omega))$ and $ L^2(I_m;H^{1}_0(\Omega))$. We note, that the first sum vanishes for $u \in \Xk$. The dG($q$) semidiscrete (in time) approximation $u_k\in X_k^q$ of \eqref{eq: heat equation} is defined as
\begin{equation}\label{eq: semidiscrete heat with RHS}
B(u_k,\varphi_k)=(f,\varphi_k)_{\IOm}+(u_0,\varphi_{k,0}^+)_\Om \quad \text{for all }\; \varphi_k\in X_k^q.
\end{equation}
Rearranging the terms in \eqref{eq: bilinear form B}, we obtain an equivalent (dual) expression of $B$:
\begin{equation}\label{eq:B_dual}
 B(u,\varphi)= - \sum_{m=1}^M \langle u,\partial_t \varphi \rangle_{I_m \times \Omega} + (\na u,\na \varphi)_{\IOm}-\sum_{m=1}^{M-1} (u_m^-,[\varphi]_m)_\Om + (u_M^-,\varphi_M^-)_\Om.
\end{equation}

The analysis of such schemes in non-Hilbertian setting is usually done by using a semigroup approach that represents time stepping formulas as a Dunford-Taylor integral in the complex plane \cite[~Ch. 9]{ThomeeV_2006}. This approach requires certain resolvent estimates.
For Lipschitz domains and a given $\gamma\in (0,\pi/2)$, the resolvent estimate (see \cite{ShenZW_1995a})  guarantees the existence of a constant $C$ such that for all $u\in L^p(\Om)$, $1\le p\le \infty$, and any $z \in \C \setminus \Sigma_{\gamma}$ the following estimate holds:
\begin{equation}\label{eq: continuous resolvent}
\|(z+\Delta)^{-1}u\|_{L^p(\Om)}\le  \frac{C}{1+\abs{z}}\|u\|_{L^p(\Om)},
\end{equation}
where the Laplace operator $-\Delta$ is supplemented with homogeneous Dirichlet boundary conditions, and
\begin{equation}\label{eq: definition of sigma}
{\Sigma_{\gamma}}= \{z \in \mathbb{C} : \abs{\arg{(z)}} \le \gamma\}.
\end{equation}
Using the identity $\Delta(z+\Delta)^{-1}=\Id-z(z+\Delta)^{-1}$, one immediately obtains,
\begin{equation}\label{eq: continuous resolvent with laplace}
\|\Delta(z+\Delta)^{-1}u\|_{L^p(\Om)}\le  C\|u\|_{L^p(\Om)},\quad  z \in \C \setminus\Sigma_{\gamma}, \quad 1\le p\le \infty,\quad u\in L^p(\Om).
\end{equation}
We note, that all our results for semidiscrete solutions hold if we replace the Laplace operator $-\Delta$ with a more general self-adjoint second order elliptic operator $A$ provided it satisfies \eqref{eq: continuous resolvent}.

%%%%%%%%%%%%%%%%%%%%%%%%%%%%%%%%%%%%%%%%%%%%%%%%%%%%%%%%%%%%%%%%%%%%%%%%%%%%%%%%%%%%%%%%%%%%%%%%%%%%%%%%%%%%%%%%%%%%%%%%%%%%

\section{Estimates for dG($0$)} \label{sec: dG0}

For the ease of the presentation, we first establish the results for the lowest order piecewise constant discretization dG($0$). In this case, we use the following notation,
\begin{equation}\label{eq: dG0 + -}
u_{k,m}=u_k|_{I_m}, \quad u_{k,m}^+=u_{k,m+1},\quad u_{k,m}^-=u_{k,m},\quad m=1,2,\dots,M-1.
\end{equation}
First, we establish results for the homogeneous problem. In this case the dG($0$) method is equivalent to the Backward Euler method.
\subsection{Results for the homogeneous problem}
Let $f=0$, $u_0\in L^p(\Om)$ and let $u_k\in X_k^0$ be the semidiscrete approximation  of \eqref{eq: heat equation} defined by
\begin{equation}\label{eq: homogeneous dg parabolic}
B(u_k,\chi_k)=(u_{0},\chi_{k,1}), \quad \forall \chi_k\in X^0_k,
\end{equation}
i.e., the dG($0$) solution $u_k$ satisfies
\begin{equation}\label{eq: homogeneous dg0 parabolic one step}
\begin{aligned}
u_{k,1}-k_1\Delta u_{k,1}&=u_0,\\
u_{k,m}-k_m\Delta u_{k,m}&=u_{k,m-1}, \quad m=2,3,\dots,M.
\end{aligned}
\end{equation}
The first result shows that the solution can not grow from one time step to the next one.
\begin{lemma}\label{lemma: monotonicity}
Let $u_k$ be the solution of \eqref{eq: homogeneous dg parabolic}. Then, for $u_0 \in L^p(\Om)$, $1\le p\le \infty$ there holds
\[
\|u_{k,m}\|_{L^p(\Om)} \le \|u_0\|_{L^p(\Om)}  \quad \forall m = 1,2,\dots,M.
\]
\end{lemma}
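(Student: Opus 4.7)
The plan is to reduce the lemma by induction on $m$ to a one-step contractivity statement: if $v \in H^1_0(\Omega)$ solves $v - k \Delta v = g$ weakly in $\Omega$, then
\[
\|v\|_{L^p(\Omega)} \le \|g\|_{L^p(\Omega)}
\]
for every $k > 0$ and every $1 \le p \le \infty$. Applying this iteratively along the recursion \eqref{eq: homogeneous dg0 parabolic one step} with $(v,g) = (u_{k,m}, u_{k,m-1})$ (and initially $(u_{k,1}, u_0)$) immediately yields the claim.

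For $1 < p < \infty$ I would test the equation against $|v|^{p-2} v$ and use the pointwise identity $\nabla(|v|^{p-2} v) = (p-1) |v|^{p-2} \nabla v$ to obtain
\[
\int_\Omega |v|^p \, dx + k(p-1) \int_\Omega |v|^{p-2} |\nabla v|^2 \, dx = \int_\Omega g \, |v|^{p-2} v \, dx.
\]
Dropping the nonnegative gradient term and applying H\"older's inequality on the right yields $\|v\|_{L^p}^p \le \|g\|_{L^p} \|v\|_{L^p}^{p-1}$, which gives the desired bound.

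For the endpoint $p = \infty$ I would invoke the weak maximum principle: with $M = \|g\|_{L^\infty}$, test the equation against $w := (v - M)_+ \in H^1_0(\Omega)$ (which vanishes on $\partial \Omega$ since $v|_{\partial \Omega} = 0 \le M$). On $\{v > M\}$ one has $g \le M$ and $\nabla v = \nabla w$, hence
\[
\int_\Omega w^2 \, dx + k \int_\Omega |\nabla w|^2 \, dx \le 0,
\]
forcing $w \equiv 0$; a symmetric argument gives $v \ge -M$. For $p = 1$ one uses self-adjointness of the Dirichlet Laplacian, $R_k := (I - k\Delta)^{-1} = R_k^*$, together with the $p = \infty$ bound just proved, via
\[
\|R_k g\|_{L^1(\Omega)} = \sup_{\|\varphi\|_{L^\infty(\Omega)} \le 1} (g, R_k \varphi)_\Omega \le \|g\|_{L^1(\Omega)}.
\]

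I do not anticipate a real obstacle. The mild technical point is justifying the test function $|v|^{p-2} v$ for large $p$ on a merely Lipschitz domain, handled by a standard truncation-and-limit argument (one can first establish the $p = \infty$ bound to ensure $v \in L^\infty$). Alternatively, one can prove only the endpoints $p = 1$ and $p = \infty$ as above and recover all intermediate $p$ by Riesz--Thorin interpolation, sidestepping the $|v|^{p-2} v$ computation entirely. Note that the sector resolvent estimate \eqref{eq: continuous resolvent} is not required here: the lemma reflects a sharper monotonicity property of the backward Euler step that uses the real positivity of $-\Delta$ rather than any complex sector bound.
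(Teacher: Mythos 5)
Your proof is correct, and its overall skeleton (reduce to one-step contractivity, prove the endpoint $p=\infty$, get $p=1$ by duality/self-adjointness, interpolate for intermediate $p$) coincides with the paper's. The genuine difference is in how the $L^\infty$ one-step bound is obtained: the paper argues by contradiction at an interior maximum point, which requires first knowing $u_{k,1}\in C_0(\Om)$ (it cites a regularity theorem for Lipschitz domains) and then invoking the classical maximum principle for $-\Delta$ on a small ball; you instead use the Stampacchia truncation $w=(v-M)_+$ and a purely variational energy identity, which needs no continuity of the solution and is self-contained at the level of $H^1_0$. Your optional direct route for $1<p<\infty$ via testing with $\abs{v}^{p-2}v$ is also sound (the identity $\int_\Om \abs{v}^p\,dx + k(p-1)\int_\Om \abs{v}^{p-2}\abs{\nabla v}^2\,dx = \int_\Om g\,\abs{v}^{p-2}v\,dx$ gives the bound after H\"older), and you correctly flag the only technical issue, the admissibility of that test function for $p$ near $1$ or large $p$, together with the clean workaround of using only the endpoints plus Riesz--Thorin, which is exactly what the paper does. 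Your closing remark that the lemma rests on real positivity of $-\Delta$ rather than the sectorial resolvent estimate \eqref{eq: continuous resolvent} is accurate and consistent with the paper, which reserves the resolvent estimate for the smoothing results (Theorem~\ref{thm: homogeneous laplace} onward).
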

\begin{proof}
First, we assume $u_0 \in L^\infty(\Omega)$ and establish
\begin{equation}\label{eq:Linfty maximamum}
\|u_{k,m}\|_{L^\infty(\Om)} \le \|u_0\|_{L^\infty(\Om)}  \quad m = 1,2,\dots,M.
\end{equation}
It is sufficient  to consider only a single time step,
\begin{equation}
u_{k,1}-k_1 \Delta u_{k,1} = u_{0}.
\end{equation}
We want to show that $\|u_{k,1}\|_{L^\infty(\Om)}\le \|u_{0}\|_{L^\infty(\Om)}$. Assume it is false. Let $x_0\in \Om$ be a point where $u_{k,1}$ attains a maximum. By~\cite[Theorem 3.3]{Haller-DintelmannR_MeyerC_RehbergJ_SchielaA_2009}, we know that $u_{k,1}\in C_0(\Om)$, hence, there exists an open ball $B_\delta(x_0)$ of radius $\delta>0$ centered at $x_0$ with $\overline{B_\delta(x_0)} \subset \Omega$ such that
$$
u_{k,1}(x)>\|u_0\|_{L^\infty(\Om)} \quad \text{for all } \; x\in B_\delta(x_0).
$$
Hence,
$$
u_{k,1}(x)-u_{0}(x)>0 \quad \text{on}\ B_\delta(x_0).
$$
By the maximum principle, from
$$
-\Delta u_{k,1} = \frac{1}{k_1}\left(u_{0}-u_{k,1}\right)<0 \quad \text{on}\ B_\delta(x_0),
$$
we obtain a contradiction to the assumption that $u_{k,1}$ has a maximum at the interior point $x_0$. This contradiction  establishes \eqref{eq:Linfty maximamum}.
Next, using a duality argument, we will show
\begin{equation}\label{eq:L1 maximamum}
\|u_{k,1}\|_{L^1(\Om)}\le \|u_0\|_{L^1(\Om)}.
\end{equation}
Consider the problem, to find $z_{k,1}\in H^1_0(\Om)$ that satisfies,
$$
z_{k,1}-k_1\Delta z_{k,1} =z_0, \quad \text{with $z_0=\sgn u_{k,1}$}.
$$
The solution $z_{k,1}$ can be thought of as a single step of the dG($0$) method to a parabolic problem with initial condition $\sgn u_{k,1}$.
Thus,
$$
\|u_{k,1}\|_{L^1(\Om)}=(u_{k,1},z_{0})=(z_{k,1},u_{k,1})+k_1(\na z_{k,1}, \na u_{k,1})=(u_{0},z_{k,1})\le \|u_{0}\|_{L^1(\Om)}\|z_{0}\|_{L^\infty(\Om)}\le \|u_{0}\|_{L^1(\Om)},
$$
where  we have used \eqref{eq:Linfty maximamum} for $z_k$ and the fact that $\|z_{0}\|_{L^\infty(\Om)}=\norm{\sgn{u}_{k,1}}_{L^\infty(\Om)}=1$.
This establishes \eqref{eq:L1 maximamum}. Interpolating, we obtain the lemma for $1\le p\le \infty$.
\end{proof}

Next we will establish a smoothing result.
\begin{theorem}[Homogeneous smoothing estimate]\label{thm: homogeneous laplace}
Let $u_k\in X_k^0$ be the solution of \eqref{eq: homogeneous dg parabolic} with $u_0 \in L^p(\Omega)$, $ 1\le p\le \infty$. Then there exists a constant $C$ independent of $k$ such that
$$
\|\Delta u_{k,m}\|_{L^p(\Om)}\le \frac{C}{t_m}\|u_0\|_{L^p(\Om)},  \quad m = 1,2,\dots,M.
$$
\end{theorem}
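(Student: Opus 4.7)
I would separate the cases $m=1$ and $m\ge 2$.

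For $m=1$, the defining equation $(I-k_1\Delta)u_{k,1}=u_0$ yields $\Delta u_{k,1}=k_1^{-1}(u_{k,1}-u_0)$. Combined with $\|u_{k,1}\|_{L^p(\Om)}\le\|u_0\|_{L^p(\Om)}$ from Lemma~\ref{lemma: monotonicity}, the triangle inequality gives $\|\Delta u_{k,1}\|_{L^p(\Om)}\le 2k_1^{-1}\|u_0\|_{L^p(\Om)}=2t_1^{-1}\|u_0\|_{L^p(\Om)}$, which is the desired bound at $m=1$.

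For $m\ge 2$ I would use the Dunford--Taylor functional calculus. Writing $u_{k,m}=R_m(-\Delta)u_0$ with $R_m(\mu)=\prod_{j=1}^m(1+k_j\mu)^{-1}$, and observing that $\lambda R_m(\lambda)$ decays like $|\lambda|^{1-m}$ at infinity for $m\ge 2$, we have
$\Delta u_{k,m}=-\frac{1}{2\pi i}\int_\Gamma \lambda R_m(\lambda)(\lambda+\Delta)^{-1}u_0\,d\lambda$
for a Hankel-type contour $\Gamma\subset\mathbb{C}\setminus\Sigma_\gamma$ enclosing the spectrum of $-\Delta$. Applying the resolvent estimate~\eqref{eq: continuous resolvent} then reduces the theorem to the contour bound $\int_\Gamma|\lambda||R_m(\lambda)|/(1+|\lambda|)\,|d\lambda|\le C/t_m$.

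To prove this bound I would parametrize $\Gamma$ by two rays $\lambda=\rho e^{\pm i\theta}$ for $\rho\ge\rho_0$, joined by an arc $|\lambda|=\rho_0$, $|\arg\lambda|\le\theta$, with $\theta\in(\gamma,\pi/2]$ and $\rho_0$ of order $1/k_m$ (matched to the time-step scale), and then perform a single integration by parts in $\lambda$. In the uniform-step case this uses the antiderivative identity $\tfrac{d}{d\lambda}(1+k\lambda)^{-(m-1)}=-(m-1)k(1+k\lambda)^{-m}$, trading one factor of $R_m$ for the explicit scaling $1/((m-1)k)$ while lowering the power of $|1+k\lambda|$ by one (the derivative of the resolvent that appears, $\Delta(\lambda+\Delta)^{-2}$, still has operator norm $O(1/(1+|\lambda|))$). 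On the rays the inequality $|1+k_j\lambda|\ge k_j|\lambda|$ makes the remaining $\lambda$-integral bounded by an $m,k$-independent constant, and on the arc each $|1+k_j\lambda|$ is bounded below by an absolute constant strictly greater than $1$, producing exponential smallness in $m$ that is absorbed. Combined with the prefactor $1/((m-1)k)$ this delivers the $C/t_m$ bound.

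I expect the main obstacle to be the treatment of $\prod_j|1+k_j\lambda|$ for non-uniform time steps, since the clean single integration by parts of the uniform case does not transfer directly -- the product $R_m$ has no explicit antiderivative when the $k_j$ vary, and the naive factor-wise contour bound can blow up with $m$. Exploiting the mesh quasi-uniformity condition~(ii), which keeps consecutive $k_j$ within a bounded ratio, is essential to either reduce the non-uniform estimate to the uniform one or to close the argument inductively while controlling the growth of the constant.
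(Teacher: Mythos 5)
Your overall strategy (Dunford--Taylor representation plus the resolvent estimate~\eqref{eq: continuous resolvent}) is exactly the one the paper relies on: its ``proof'' is a citation to Eriksson--Johnson--Larsson, p.~1321, together with the remark that the $L^2$ argument carries over to $L^p$ once \eqref{eq: continuous resolvent} is available. Your treatment of $m=1$ via Lemma~\ref{lemma: monotonicity} is also correct. The problem is the contour. You place the connecting arc at radius $\rho_0\sim 1/k_m$ on the side $\abs{\arg\lambda}\le\theta$, i.e.\ crossing the positive real axis. This breaks the argument twice over: first, the region enclosed by your $\Gamma$ is $\{\abs{\arg\lambda}\le\theta,\ \abs{\lambda}\ge\rho_0\}$, so all eigenvalues of $-\Delta$ below $\rho_0$ are left outside and the representation $\Delta u_{k,m}=-\frac{1}{2\pi i}\int_\Gamma \lambda R_m(\lambda)(\lambda+\Delta)^{-1}u_0\,d\lambda$ is simply false for $k_m$ small; second, the portion of the arc with $\abs{\arg\lambda}\le\gamma$ lies inside $\Sigma_\gamma$, where \eqref{eq: continuous resolvent} gives no control and the resolvent genuinely blows up near eigenvalues, so your claimed arc bound (which only addresses $\abs{R_m}$, not the resolvent) cannot be closed. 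Your statement that $\Gamma\subset\C\setminus\Sigma_\gamma$ is inconsistent with the arc you describe. Routing the arc the other way, through $\abs{\arg\lambda}\ge\theta$ at radius $\sim 1/k_m$, is no better for a rational symbol: it would enclose poles of $R_m$ at $\lambda=-1/k_j$ for $k_j\ge k_m$.

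The fix, which is the argument in the cited reference, is to let the two rays meet at the origin (the integrand is integrable there because $\abs{\lambda R_m(\lambda)}\le\abs{\lambda}$ and the resolvent is bounded near $0$ by \eqref{eq: continuous resolvent}), and to extract the factor $1/t_m$ not from the location of the arc but from the scalar estimate
\begin{equation*}
\int_0^\infty \prod_{j=1}^m\bigl|1+k_j\rho e^{i\theta}\bigr|^{-1}\,d\rho\ \le\ \frac{C}{t_m},\qquad m\ge 2,
\end{equation*}
combined with $\|\Delta u_{k,m}\|_{L^p(\Om)}\le C\int_\Gamma \abs{R_m(\lambda)}\,\abs{d\lambda}\,\|u_0\|_{L^p(\Om)}$, which follows from $\abs{\lambda}/(1+\abs{\lambda})\le 1$. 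This displayed bound is elementary and needs no integration by parts: since $\theta<\pi/2$, one has $\abs{1+k_j\rho e^{i\theta}}\ge 1+k_j\rho\cos\theta$, hence $\prod_j\abs{1+k_j\rho e^{i\theta}}\ge 1+\rho^2\cos^2\theta\sum_{i<j}k_ik_j$, and mesh condition (ii) gives $\max_j k_j\le\tfrac{\kappa}{1+\kappa}t_m$, whence $\sum_{i<j}k_ik_j=\tfrac12\bigl(t_m^2-\sum_j k_j^2\bigr)\ge c_\kappa t_m^2$ and the integral is $O(1/t_m)$. This both removes the need for your integration by parts (which, as you note yourself, has no clean analogue for variable steps because $\prod_j(1+k_j\lambda)^{-1}$ has no explicit antiderivative) and resolves the non-uniform-step obstacle you flag at the end: condition (ii) enters precisely to guarantee that no single step carries almost all of $t_m$, so that the second elementary symmetric function of the $k_j$ is comparable to $t_m^2$. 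As written, your proposal does not constitute a proof for $m\ge 2$.
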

\begin{proof}
The proof is given on page 1321 in \cite{ErikssonK_JohnsonC_LarssonS_1998a} for the $L^2(\Om)$ norm, but the proof is valid for the $L^p(\Om)$ norm as well by using the resolvent estimate \eqref{eq: continuous resolvent} with respect to the $L^p(\Om)$ norm.
\end{proof}

\begin{remark}\label{remark: homogenous laplaca}
Let $u_k\in X_k^0$ be the solution of \eqref{eq: homogeneous dg parabolic} with $u_0 \in L^p(\Omega)$, $ 1\le p\le \infty$. Then there exists a constant $C$ independent of $k$ such that
$$
\|u_{k,m}\|_{L^p(\Om)} + (t_m-t_l)\|\Delta u_{k,m}\|_{L^p(\Om)}\le C\|u_{k,l}\|_{L^p(\Om)},\quad \forall m>l\ge 1.
$$
\end{remark}
From \eqref{eq: homogeneous dg0 parabolic one step}, we immediately obtain the following result.
\begin{corollary}\label{cor: homogeneous jumps}
Let $u_k\in X_k^0$ be the solution of \eqref{eq: homogeneous dg parabolic} with $u_0 \in L^p(\Omega)$, $ 1\le p\le \infty$. Then there exists a constant $C$ independent of $k$ such that
$$
\left\|
\frac{[u_{k}]_{m-1}}{k_m}\right\|_{L^p(\Om)}\le \frac{C}{t_m}\|u_0\|_{L^p(\Om)},  \quad m = 1,2,\dots,M.
$$
\end{corollary}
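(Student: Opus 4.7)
The plan is very short because the corollary is essentially a one-line consequence of the scheme together with Theorem \ref{thm: homogeneous laplace}. The key observation is that for the dG($0$) scheme each time slab is a single Backward Euler step, and the jump at $t_{m-1}$ encodes exactly the discrete Laplacian of the new iterate.

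First I would rewrite the scheme \eqref{eq: homogeneous dg0 parabolic one step} in the form
\[
u_{k,m} - u_{k,m-1} = k_m \Delta u_{k,m}, \qquad m = 2,3,\dots,M,
\]
and for $m=1$, using the convention $u_{k,0}^{-} = u_0$, we similarly have $u_{k,1} - u_0 = k_1 \Delta u_{k,1}$. Combined with the notation \eqref{eq: dG0 + -} and the definition of jumps \eqref{def: time jumps}, this gives the identity
\[
\frac{[u_k]_{m-1}}{k_m} = \Delta u_{k,m}, \qquad m = 1,2,\dots,M.
\]

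Then I would simply take $L^p(\Omega)$ norms on both sides and invoke Theorem \ref{thm: homogeneous laplace}, which yields
\[
\left\| \frac{[u_k]_{m-1}}{k_m} \right\|_{L^p(\Omega)} = \| \Delta u_{k,m} \|_{L^p(\Omega)} \le \frac{C}{t_m} \| u_0 \|_{L^p(\Omega)},
\]
exactly as claimed. There is no genuine obstacle; the only thing to be careful about is the first step $m=1$, where one has to note that the bilinear form $B$ treats $u_0$ as the ``previous value'' $u_{k,0}^{-}$ so that the identification of the jump with $k_1 \Delta u_{k,1}$ is consistent with \eqref{eq: homogeneous dg parabolic}.
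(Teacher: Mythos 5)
Your proof is correct and matches the paper's intent exactly: the paper states that the corollary follows ``immediately'' from \eqref{eq: homogeneous dg0 parabolic one step}, and the identity $[u_k]_{m-1}/k_m = \Delta u_{k,m}$ you derive (including the $m=1$ case with $u_{k,0}^-=u_0$) together with Theorem \ref{thm: homogeneous laplace} is precisely that one-line argument.
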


%%%%%%%%%%%%%%%%%%%%%%%%%%%%%%%%%%%%%%%%%%%%%%%%%%%%%%%%%%%%%%%%%%%%%%%%%%%%%%%%%%%%%%%%%%%%%%%%%%%%%%%%%%%%%%%%%%%%%%%%%%%%
\subsection{Results for the inhomogeneous problem}

Now we consider $u_{k}\in X_k^0$ to be the dG($0$) solution to the parabolic equation with $u_0=0$, i.e., $u_k$ satisfies,
\begin{equation}\label{eq: dG nonhomogeneous equation}
B(u_k,\varphi_k)=(f,\varphi_k)_{I\times \Om},\quad \forall \varphi_k\in X_k^0.
\end{equation}
Thus, the dG($0$) solution satisfies
\begin{equation}\label{eq: one step dG0 inhomogeneous}
\begin{aligned}
u_{k,1}-k_1\Delta u_ {k,1} &= k_1f_1,\\
u_{k,m}-k_m\Delta u_ {k,m} &= u_{k,m-1}+k_mf_m,\quad m=2,3,\dots,M,
\end{aligned}
\end{equation}
where
$$
f_m(\cdot)=\frac{1}{k_m}\int_{I_m}f(t,\cdot)dt.
$$
Since $f_m$  is  the $L^2$ projection  of $f$ onto the piecewise constant functions on each subinterval $I_m$, we have
\begin{subequations} \label{eq: estimate for fm in lp}
\begin{align}
\max_{1\le m\le M}\|f_m\|_{L^p(\Om)}&\le C\|f\|_{L^\infty(I;L^p(\Om))}, \quad 1\le p\le\infty,\\
 \sum_{m=1}^M k_m\|f_m\|^r_{L^p(\Om)}&\le C\|f\|^r_{L^r(I;L^p(\Om))},\quad 1\le p\le\infty, \quad 1\le r<\infty.
\end{align}
\end{subequations}

We now state our main result for the dG($0$) approximations.
\begin{theorem}[Maximal parabolic regularity]\label{thm: maximal parabolic regularity}
Let $1\le s,p\le \infty$ and $u_0=0$. Then, there exists a constant $C$ independent of $k$ such that for every
 $f\in L^s(I;L^p(\Om))$ and $u_k$ satisfying \eqref{eq: dG nonhomogeneous equation}, the following estimate holds:
$$
\|\Delta u_k\|_{L^s(I;L^p(\Om))}\le C\lk\| f\|_{L^s(I;L^p(\Om))}, \quad 1\le s\le \infty, \quad 1\le p\le \infty.
$$
\end{theorem}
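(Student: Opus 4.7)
The plan is to follow the discrete analogue of the standard continuous Duhamel-based proof of maximal parabolic regularity: unroll the one-step recursion, bound each resolvent block using \eqref{eq: continuous resolvent with laplace} and the smoothing estimate already established in Theorem~\ref{thm: homogeneous laplace}, and then reduce the $L^s$-in-time estimate to a Schur-test bound on a weakly singular discrete convolution, picking up the $\lk$ factor at both endpoints.

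Concretely, set $T_l := (I - k_l\Delta)^{-1}$ and $E^{(m)}_j := T_m T_{m-1}\cdots T_j$ for $j \le m$. Iterating~\eqref{eq: one step dG0 inhomogeneous} (using $u_{k,0}=0$) gives the discrete Duhamel formula
$$
\Delta u_{k,m} \;=\; \sum_{j=1}^{m} k_j\, \Delta E^{(m)}_j f_j.
$$
For the diagonal term $j=m$ I would write $T_m = -k_m^{-1}(z+\Delta)^{-1}$ with $z = -1/k_m \notin \Sigma_\gamma$, so that~\eqref{eq: continuous resolvent with laplace} yields $\|k_m \Delta T_m\|_{L^p(\Om) \to L^p(\Om)} \le C$ uniformly in $k_m$. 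For $j<m$ I factor $\Delta E^{(m)}_j = (\Delta E^{(m)}_{j+1})\, T_j$; since $E^{(m)}_{j+1} v$ is precisely the dG$(0)$ solution at step $m$ of the homogeneous problem starting from $v$ at time $t_j$, Remark~\ref{remark: homogenous laplaca} gives $\|\Delta E^{(m)}_{j+1}\|_{L^p(\Om) \to L^p(\Om)} \le C/(t_m-t_j)$, while Lemma~\ref{lemma: monotonicity} gives $\|T_j\|_{L^p(\Om) \to L^p(\Om)} \le 1$. Combining,
$$
\|\Delta u_{k,m}\|_{L^p(\Om)} \;\le\; C\|f_m\|_{L^p(\Om)} + C\sum_{j=1}^{m-1} \frac{k_j}{t_m - t_j}\,\|f_j\|_{L^p(\Om)}.
$$

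Since $u_k$ is piecewise constant in time, $\|\Delta u_k\|_{L^s(I;L^p(\Om))}^s = \sum_m k_m\|\Delta u_{k,m}\|_{L^p(\Om)}^s$, and \eqref{eq: estimate for fm in lp} absorbs the diagonal contribution. For the off-diagonal piece I view the kernel $K(m,j) = 1/(t_m-t_j)$ $(j<m)$ as an operator on the measure space with masses $\mu(\{j\})=k_j$ and apply Schur's test. Using the mesh conditions, the elementary bound $k_j/(t_m-t_j) \le \kappa \int_{t_j}^{t_{j+1}}(t_m-\tau)^{-1}\,d\tau$ for $j \le m-2$, together with the trivial bound $k_{m-1}/(t_m-t_{m-1}) \le \kappa$, delivers the row-sum estimate
$$
\sup_m \sum_{j<m}\frac{k_j}{t_m-t_j}\;\le\;C\lk,
$$
and a symmetric argument in $m$ gives the analogous column-sum bound. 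These yield the desired estimate directly at $s=1$ and $s=\infty$, and Riesz--Thorin interpolation on $L^s(\mu;L^p(\Om))$ extends it to all $1<s<\infty$ with the same logarithmic constant.

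The main obstacle will be extracting precisely the $\lk$ factor in the Schur sums under the graded-mesh assumptions of Section~\ref{sec: preliminaries}: the naive continuous analogue $\int_0^{t_m} d\tau/(t_m-\tau)$ diverges, so one must separate off the $O(1)$ near-diagonal terms using (ii), apply the integral comparison on the remaining terms with the divergent upper limit truncated at $\sim k_m$, and invoke (i) to pass from $\log(T/k_{\min})$ to $\lk$ up to a multiplicative constant depending on $\beta$. The passage from the two endpoint estimates to the full range $1<s<\infty$ by interpolation is routine.
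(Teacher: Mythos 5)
Your proposal is correct and follows essentially the same route as the paper: the same discrete Duhamel representation, the same reduction of each off-diagonal block to the homogeneous smoothing estimate of Remark~\ref{remark: homogenous laplaca} giving the kernel bound $C/(t_m-t_{j})$, and the same endpoint-plus-interpolation structure (your Schur row/column sums are exactly the paper's $s=\infty$ bound \eqref{eq: estimating sum by integral for log} and its $s=1$ change-of-summation-order counterpart). The only cosmetic difference is that you peel off the diagonal term and treat it via the resolvent estimate, whereas the paper keeps $l=m$ inside the sum where it contributes $k_m/(t_m-t_{m-1})=1$.
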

\begin{proof}
Using \eqref{eq: one step dG0 inhomogeneous}, we can write the dG($0$) solution as
$$
u_{k,m}=\sum_{l=1}^m k_l\left(\prod_{j=1}^{m-l+1}r(-k_{m-j+1}\Delta)\right)f_l,\quad m=1,2,\dots,M,
$$
where $r(z)=(1+z)^{-1}.$
Then,
$$
\Delta u_{k,m}=\sum_{l=1}^m k_l\left(\Delta\prod_{j=1}^{m-l+1} r(-k_{m-j+1}\Delta)\right)f_l,\quad m=1,2,\dots,M.
$$
Hence
$$
\|\Delta u_{k,m}\|_{L^p(\Om)}\le\sum_{l=1}^m k_l\left\|\left(\Delta\prod_{j=1}^{m-l+1} r(-k_{m-j+1}\Delta)\right)f_l\right\|_{L^p(\Om)},\quad m=1,2,\dots,M.
$$
From Remark \ref{remark: homogenous laplaca}, since each term in the sum on the right-hand side can be thought of as a homogeneous solution with initial condition $f_l$ at $t=t_{l-1}$, we have
$$
\left\|\left(\Delta\prod_{j=1}^{m-l+1} r(-k_{m-j+1}\Delta)\right)f_l\right\|_{L^p(\Om)}\le \frac{C}{t_m-t_{l-1}}\|f_l\|_{L^p(\Om)}.
$$
Thus, we obtain
\begin{equation}\label{eq: before Holder}
\|\Delta u_{k,m}\|_{L^p(\Om)}\le C\sum_{l=1}^m  \frac{k_l}{t_m-t_{l-1}}\|f_l\|_{L^p(\Om)},\quad m=1,2,\dots,M.
\end{equation}
For $s=\infty$, we obtain from the above estimate and using \eqref{eq: estimate for fm in lp},
\begin{equation*}
\begin{aligned}
\|\Delta u_{k}\|_{L^\infty(I;L^p(\Om))}&=\max_{1\le m\le M}\|\Delta u_{k,m}\|_{L^p(\Om)}\le C\max_{1\le m\le M}\sum_{l=1}^m \frac{k_l}{t_m-t_{l-1}}\|f_l\|_{L^p(\Om)}\\
&\le C\max_{1\le l\le M}\|f_l\|_{L^p(\Om)}\max_{1\le m\le M}\sum_{l=1}^m  \frac{k_l}{t_m-t_{l-1}}\le C\lk \|f\|_{L^\infty(I;L^p(\Om))},
\end{aligned}
\end{equation*}
where in the last step we used that
\begin{equation}\label{eq: estimating sum by integral for log}
\sum_{l=1}^m  \frac{k_l}{t_m-t_{l-1}}\le 1+\int_0^{t_{m-1}}\frac{dt}{t_m-t}=1+\ln{\frac{t_m}{k_m}}\le C\lk,
\end{equation}
by using the assumption $k_{\min}\geq Ck^\beta$ and $k\le \frac{T}{4}$.

For $s=1$, we have
$$
\|\Delta u_{k}\|_{L^1(I;L^p(\Om))}=\sum_{m=1}^Mk_m\|\Delta u_{k,m}\|_{L^p(\Om)}\le C\sum_{m=1}^Mk_m\sum_{l=1}^m \frac{k_l}{t_m-t_{l-1}}\|f_l\|_{L^p(\Om)}.
$$
Changing the order of summation and using \eqref{eq: estimate for fm in lp}, we obtain
$$
\begin{aligned}
\|\Delta u_{k}\|_{L^1(I;L^p(\Om))}&\le C\sum_{l=1}^M k_l\|f_l\|_{L^p(\Om)} \sum_{m=l}^M \frac{k_m}{t_m-t_{l-1}}\\&\le C\lk\sum_{l=1}^M k_l\|f_l\|_{L^p(\Om)}\le C\lk\|f\|_{L^1(I;L^p(\Om))},
\end{aligned}
$$
where we used again that
$$
\sum_{m=l}^M \frac{k_m}{t_m-t_{l-1}}\le C\lk.
$$
Interpolating between $s=1$ and $s=\infty$, we obtain the result for any $1\le s\le \infty$.
\end{proof}
\begin{remark}
The appearance of the logarithmic term is natural for the critical values $s=1$, $p=1$, $s=\infty$, or $p=\infty$, since the corresponding maximal parabolic regularity results for the continuous problem  hold only for $1<s,p<\infty$. For $s=2$ or $p=2$,
the power of the logarithm can be lowered. Thus, for  $p=2$, from \cite{DMeidner_BVexler_2008a} we know,
$$
\|\Delta u_{k}\|_{L^2(I;L^2(\Om))}\le C\|f\|_{L^2(I;L^2(\Om))}
$$
and from \eqref{eq: before Holder}, we have
$$
\begin{aligned}
\|\Delta u_{k}\|_{L^s(I;L^2(\Om))}\le C\lk\|f\|_{L^s(I;L^2(\Om))}, \quad 1\le s\le\infty.
\end{aligned}
$$
Interpolating between $s=2$ and $s=\infty$ and between $s=2$ and $s=1$, we obtain
$$
\|\Delta u_{k}\|_{L^s(I;L^2(\Om))}\le C\left(\lk\right)^{\frac{\abs{s-2}}{s}}\|f\|_{L^s(I;L^2(\Om))},  \quad \text{for any $1\le s\le\infty$}.
$$
Similarly, we can obtain,
$$
\|\Delta u_{k}\|_{L^2(I;L^p(\Om))}\le C\left(\lk\right)^{\frac{\abs{p-2}}{p}}\|f\|_{L^2(I;L^p(\Om))},  \quad \text{for any $1\le p\le\infty$}.
$$
\end{remark}

\begin{corollary}[Maximal parabolic regularity for jumps]\label{cor: maximal parabolic regularity jumps}
Let $1\le s,p\le \infty$ and $u_0=0$. Then, there exists a constant $C$ independent of $k$ such that for every
 $f\in L^s(I;L^p(\Om))$ and $u_k$ satisfying \eqref{eq: dG nonhomogeneous equation}, the following estimate holds,
\begin{align*}
\max_{1\le m \le M}\left\|\frac{[u_k]_{m-1}}{k_m}\right\|_{L^p(\Om)}&\le C\lk\| f\|_{L^\infty(I;L^p(\Om))}, \quad 1\le p\le \infty,\\
\left(\sum_{m=1}^Mk_m\left\|\frac{[u_k]_{m-1}}{k_m}\right\|^s_{L^p(\Om)}\right)^{\frac{1}{s}}&\le C\lk\| f\|_{L^s(I;L^p(\Om))}, \quad 1\le s< \infty, \quad 1\le p\le \infty,
\end{align*}
where the jump term $[u_k]_0$ at $t = 0$ is defined as $u_{k,1}$.
\end{corollary}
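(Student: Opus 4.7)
The plan is to derive both inequalities directly from Theorem~\ref{thm: maximal parabolic regularity} by expressing the jump term through the equation itself. Reading off the one-step form \eqref{eq: one step dG0 inhomogeneous}, for every $m \ge 2$ one has
\[
\frac{u_{k,m}-u_{k,m-1}}{k_m} \;=\; \Delta u_{k,m}+f_m,
\]
and since $[u_k]_0$ is defined as $u_{k,1}$, the same identity holds at $m=1$ because $u_{k,1}/k_1 = \Delta u_{k,1}+f_1$. Hence, uniformly in $m = 1,\ldots,M$,
\[
\left\|\frac{[u_k]_{m-1}}{k_m}\right\|_{L^p(\Om)} \;\le\; \|\Delta u_{k,m}\|_{L^p(\Om)} + \|f_m\|_{L^p(\Om)}.
\]

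For the first estimate I would take the maximum in $m$ on both sides. The first term is bounded by $\|\Delta u_k\|_{L^\infty(I;L^p(\Om))} \le C\lk \|f\|_{L^\infty(I;L^p(\Om))}$ by Theorem~\ref{thm: maximal parabolic regularity}, and the second by $\max_m \|f_m\|_{L^p(\Om)} \le C\|f\|_{L^\infty(I;L^p(\Om))}$ thanks to the projection estimate \eqref{eq: estimate for fm in lp}. Absorbing the non-logarithmic constant into the $\lk$ factor (which is bounded below under the mesh hypothesis $k \le T/4$) yields the bound.

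For the second estimate I would multiply the pointwise inequality by $k_m^{1/s}$, use the discrete triangle inequality in $\ell^s$, and exploit the fact that $u_k$ and $\Delta u_k$ are piecewise constant in time, so that
\[
\left(\sum_{m=1}^M k_m \|\Delta u_{k,m}\|_{L^p(\Om)}^s\right)^{1/s} = \|\Delta u_k\|_{L^s(I;L^p(\Om))} \le C\lk\|f\|_{L^s(I;L^p(\Om))}
\]
again by Theorem~\ref{thm: maximal parabolic regularity}. The remaining sum involving $f_m$ is controlled by $C\|f\|_{L^s(I;L^p(\Om))}$ using \eqref{eq: estimate for fm in lp}. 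No genuine obstacle arises here — the whole content is the algebraic identification of the jump with $\Delta u_{k,m}+f_m$, which reduces the corollary to the already-proved Theorem~\ref{thm: maximal parabolic regularity} plus a routine bound on the $L^2$-projection $f_m$.
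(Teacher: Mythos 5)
Your proposal is correct and follows essentially the same route as the paper: both identify $k_m^{-1}[u_k]_{m-1}=\Delta u_{k,m}+f_m$ from the one-step form \eqref{eq: one step dG0 inhomogeneous} and then invoke Theorem~\ref{thm: maximal parabolic regularity} together with the projection stability estimates \eqref{eq: estimate for fm in lp}. The only cosmetic difference is that you spell out the $\ell^s$ triangle inequality and the piecewise-constant identification of the sum with $\|\Delta u_k\|_{L^s(I;L^p(\Om))}$, which the paper compresses into a single line with a constant $C_s$.
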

\begin{proof}
Since by \eqref{eq: one step dG0 inhomogeneous} on each time subinterval $I_m$ we have
$$
k_m^{-1}[u_k]_{m-1}=\Delta u_{k,m}+f_m, \quad m=1,2,\dots,M,
$$
by using Theorem \ref{thm: maximal parabolic regularity}, we have
$$
\max_{1\le m \le M}k_m^{-1}\left\|[u_k]_{m-1}\right\|_{L^p(\Om)}\le \max_{1\le m \le M}\left(\|\Delta u_{k,m}\|_{L^p(\Om)}+\|f_m\|_{L^p(\Om)}\right)\le C\lk\| f\|_{L^\infty(I;L^p(\Om))}.
$$
Similarly, using Theorem \ref{thm: maximal parabolic regularity}, for $1\le s<\infty$ we have
$$
\sum_{m=1}^Mk_m\left\|\frac{[u_k]_{m-1}}{k_m}\right\|^s_{L^p(\Om)}\le C_s\sum_{m=1}^Mk_m\left(\left\|\Delta u_{k,m}\right\|^s_{L^p(\Om)}+\left\| f_{m}\right\|^s_{L^p(\Om)}\right)\le C_s\left(\lk\right)^s\| f\|^s_{L^s(I;L^p(\Om))},
$$
where the constant $C_s$ depends on $s$.
By taking the $s$-root we obtain the corollary.
\end{proof}

%%%%%%%%%%%%%%%%%%%%%%%%%%%%%%%%%%%%%%%%%%%%%%%%%%%%%%%%%%%%%%%%%%%%%%%%%%%%%%%%%%%%%%%%%%%%%%%%%%%%%%%%%%%%%%%%%%%%%%%%%%%%%%%%
\section{Estimates for dG($q$)} \label{sec: dGq}

In this section we will establish the dG($q$) version of the results from the previous section. It is convenient to introduce some additional notation. Let $q\geq 1$ and $\psi_l(t)\in P_q([0,1])$, $l=0,1,\dots,q$ be the standard Lagrange basis functions  on the interval $[0,1]$, i.e., $\psi_l\left(\frac{j}{q}\right)=\delta_{lj}$, where $\delta_{lj}$ is the Kronecker symbol. Then for any $u_k\in X_k^q$ on the time interval $I_m=(t_{m-1},t_m]$ we have
\begin{equation}\label{eq: formular of dGq of u_k on I_m}
u_k|_{I_m}=\sum_{l=0}^q U^m_{l}(x)\psi_l\left(\frac{t-t_{m-1}}{k_m}\right),
\end{equation}
with $U^m_l\in H^1_0(\Om)$ independent of $t$.
In this notation, we have
$$
u_{k,m}^+=U^{m+1}_{0}\quad\text{and}\quad u_{k,m}^-=U^{m}_q.
$$

%%%%%%%%%%%%%%%%%%%%%%%%%%%%%%%%%%%%%%%%%%%%%%%%%%%%%%%%%%%%%%%%%%%%%%%%%%%%%%%%%%%%%%%%%%%%%%%%%%%%%%%%%%%%%%%%%%%%%%%%%%%%%%%%%%%%

\subsection{Results for the homogeneous problem}

Let $u_{k}\in X^q_k$ be the semidiscrete in time solution to the parabolic equation with $f\equiv 0$, i.e.,
\begin{equation}\label{eq: dGr homogeneous equation}
B(u_k,\varphi_k)=(u_0,\varphi^+_{k,0}),\quad \forall \varphi_k\in X_k^q.
\end{equation}
Alternatively, on a single interval $I_m$, we have
\begin{equation}\label{eq: one step dGr homogenesous}
\begin{aligned}
U^1_l&=r_{l,0}(-k_1\Delta )u_0,\quad l=0,1,\dots,q,\\
U^m_l&=r_{l,0}(-k_m\Delta )U^{m-1}_q,\quad l=0,1,\dots,q,\quad m=2,3,\dots,M,
\end{aligned}
\end{equation}
where the rational functions $r_{l,0}$ are  of the form,
\begin{equation}\label{eq: rational function rlj}
r_{l,0}(\lambda)=\frac{p_{l,0}(\lambda)}{\hat{p}(\lambda)},\quad l=0,1,\dots,q,
\end{equation}
with $\hat{p}$ being a polynomial of degree $q+1$ with no roots on the right-half complex plane  and $p_{l,0}$, $l=0,1,\dots,q$ being polynomials of degree $q$ (cf.~\cite{ErikssonK_JohnsonC_LarssonS_1998a}, page 1322). Since $r_{q,0}(\lambda)$ is  a subdiagonal Pad\'{e} approximation of $e^{-\lambda}$, we also have (cf.~\cite{EgertM_RozendaalJ_2013})
\begin{equation}\label{eq: proprties of r_00}
r_{q,0}(0)=p_{q,0}(0)=\hat{p}(0)=1\quad \text{and}\quad |r_{q,0}(\lambda)-e^{-\lambda} |=O(|\lambda|^{2q+2}),
\end{equation}
as $\lambda\to 0$. The rational functions $r_{l,0}$ satisfy  the following properties, which we will often use
\begin{equation}\label{eq: proprties of r_l0}
r_{l,0}(0)=1, \quad\text{and}\quad r_{l,0}(\lambda)-1=\frac{\lambda \tilde{p}_{l}(\lambda)}{\hat{p}(\lambda)}, \quad l=0,1,\dots,q,
\end{equation}
where $\tilde{p}_l(\lambda)$ are some polynomials of degree $q$.
The first property follows, for example, by considering the homogeneous Neumann problem with initial condition $u_0=1$. Then the exact solution $u$ and the dG($q$) solution $u_k$ are the same and equal to 1, i.e., $u=u_k=1$. Hence, all nodal values $U^m_l=1$ for all $m=1,2,\dots,M$ and $l=0,1,\dots,q$.  For example for $m=1$, we have
$$
1 = U^1_l = r_{l,0}(-k_1\Delta )u_0 = r_{l,0}(-k_1\Delta )1 = r_{l,0}(0),
$$
and as a result $r_{l,0}(0)=1$.
The second property in \eqref{eq: proprties of r_l0} is just a consequence of the first one.

\begin{remark}
The dG($1$) solution $u_k$ on each subinterval $I_m$ is of the form
$$
U^m_0\left(\frac{t_m-t}{k_m}\right)+U^m_1\left(\frac{t-t_{m-1}}{k_m}\right)
$$
and the rational functions are
$\hat{p}(\lambda)=1+\frac{2}{3}\lambda+\frac{\lambda^2}{6}$, $r_{0,0}(\lambda)=1+\frac{2}{3}\lambda$, and $r_{1,0}(\lambda)=1-\frac{\lambda}{3}$.
\end{remark}

For later proof we require two supplementary results.
%%%%%%%%%%%%%%%%%%%%%%%%%%%%%%%%%%%%%%%%%%%%%%%%%%%%%%%%%%%%%%%%%%%%%%%%%%%%%
\begin{lemma}\label{lem: raional function estimates}
Let the rational function $r(z)$ be of the form
$r(z)= \frac{p(z)}{\hat{p}(z)},$
where $\hat{p}(z)$ is a polynomial of degree $q+1$ with no roots on the right half complex plane and $p(z)$ is a polynomial of degree $q$, for some $q\geq 0$.
 Then, there exists a constant $C$ independent of $k>0$, such that for any $g\in L^p(\Om)$
\begin{equation}
\|r(-k\Delta) g\|_{L^p(\Om)}\le C\|g\|_{L^p(\Om)}.
\end{equation}
\end{lemma}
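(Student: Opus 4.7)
The plan is to reduce the claim to the continuous resolvent estimate \eqref{eq: continuous resolvent} by means of a partial fraction decomposition of $r$. Since $\deg p = q < q+1 = \deg \hat p$, the rational function $r$ has no polynomial part, and denoting the distinct roots of $\hat p$ by $\lambda_1,\dots,\lambda_N$ (with respective multiplicities $m_j$ satisfying $\sum_{j} m_j = q+1$), one may write
\[
r(z) = \sum_{j=1}^N \sum_{\ell=1}^{m_j} \frac{c_{j\ell}}{(z - \lambda_j)^\ell},
\]
with constants $c_{j\ell}$ depending only on $r$. By the hypothesis on $\hat p$, every $\lambda_j$ satisfies $\operatorname{Re}(\lambda_j)<0$; in particular $|\arg \lambda_j|>\pi/2>\gamma$ and $|\lambda_j|\ge\delta>0$ for some $\delta>0$ depending only on $r$.

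Inserting $-k\Delta$ through the holomorphic functional calculus yields
\[
r(-k\Delta)g = \sum_{j=1}^N\sum_{\ell=1}^{m_j} c_{j\ell}\,(-k\Delta - \lambda_j)^{-\ell} g,
\]
so it suffices to bound each $(-k\Delta - \lambda_j)^{-\ell}$ on $L^p(\Om)$ uniformly in $k$. The key case is $\ell=1$: rewriting
\[
(-k\Delta - \lambda_j)^{-1} = -\tfrac{1}{k}\bigl(\lambda_j/k + \Delta\bigr)^{-1}
\]
and applying \eqref{eq: continuous resolvent} at $\zeta=\lambda_j/k\in\C\setminus\Sigma_\gamma$ (valid since $\operatorname{Re}(\lambda_j)/k<0$) produces
\[
\|(-k\Delta - \lambda_j)^{-1}g\|_{L^p(\Om)} \le \frac{C}{k+|\lambda_j|}\,\|g\|_{L^p(\Om)} \le \frac{C}{\delta}\,\|g\|_{L^p(\Om)},
\]
with $C$ independent of $k$. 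Iterating this inequality on $L^p(\Om)$ disposes of the higher powers $\ell\ge 2$, and summing the finitely many terms of the partial fraction expansion, whose coefficients depend only on $r$, delivers the desired uniform bound.

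The only obstacle worth naming is checking that the finitely many, $r$-dependent, poles $\lambda_j$ are quantitatively separated from the sector $\Sigma_\gamma$. This reduces to reading the hypothesis ``no roots on the right-half complex plane'' as $\operatorname{Re}(\lambda_j)<0$ strictly, which in particular forbids $0$ as a pole of $r$ and supplies the $k$-independent lower bound $|\lambda_j|\ge\delta$ needed to pass from $C/(k+|\lambda_j|)$ to a $k$-free constant. For the concrete rational functions $r_{l,0}$ of interest, property \eqref{eq: proprties of r_00} already gives $\hat p(0)=1\ne 0$, so this qualification is automatic in the intended applications.
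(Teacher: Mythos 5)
Your proposal is correct and takes essentially the same route as the paper: a partial fraction decomposition of $r$ followed by rewriting each term as $-\tfrac{1}{k}\bigl(\lambda_j/k+\Delta\bigr)^{-1}$ and invoking the continuous resolvent estimate \eqref{eq: continuous resolvent}, which yields the $k$-independent bound $C/\abs{\lambda_j}$. The only difference is that you treat the repeated-root case explicitly by iterating the first-order resolvent bound, whereas the paper assumes distinct roots and remarks that the argument ``can be slightly modified'' otherwise; your closing observation that the poles must be bounded away from zero (so that ``no roots on the right half plane'' must be read as excluding $z=0$) is a legitimate and worthwhile clarification of the hypothesis.
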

\begin{proof}
For simplicity we assume that the roots $z_1,z_2,\dots,z_q$ of $\hat{p}$ are pairwise distinct. If it is not the case, the argument can be slightly modified.
For $q=0$ we have $r(z) = \frac{c_0}{z-z_0}$ and the desired estimate follows directly by the resolvent estimate~\eqref{eq: continuous resolvent}, since
\[
r(-k\Delta)g = -\frac{c_0}{k} \left(\frac{z_0}{k}+\Delta\right)^{-1} g
\]
and therefore by~\eqref{eq: continuous resolvent}
\[
\|r(-k\Delta) g\|_{L^p(\Om)}\le \frac{\abs{c_0}}{k} \frac{C}{1+\frac{\abs{z_0}}{k}}\|g\|_{L^p(\Om)} \le \frac{C \abs{c_0}}{\abs{z_0}} \|g\|_{L^p(\Om)}.
\]
For $q>0$ we use the partial fraction decomposition
\[
r(z) = \sum_{i=0}^q \frac{c_i}{z-z_i}
\]
with some $c_i \in \mathbb{C}$. Applying the estimate for $q_0$ to each summand we obtain
\[
\|r(-k\Delta) g\|_{L^p(\Om)}\le C \left( \sum_{i=0}^q  \frac{\abs{c_i}}{\abs{z_i}}\right) \|g\|_{L^p(\Om)},
\]
which completes the proof.
\end{proof}
%%%%%%%%%%%%%%%%%%%%%%%%%%
\begin{lemma}\label{lemma: rational with z}
Let the rational function $r(z)$ be of the form
$r(z)= \frac{zp(z)}{\hat{p}(z)}$,
where $\hat{p}(z)$ is a polynomial of degree $q+1$ with no roots on the right-half complex plane and $p(z)$ is a polynomial of degree $q$, for some $q\geq 0$.
Then for any $g\in L^p(\Om)$ with $\Delta g\in L^p(\Om)$, $1\le p\le \infty$, there exists a constant $C$ independent of $k$ such that
$$
\|r(-k\Delta)g\|_{L^p(\Om)} \le Ck\|\Delta g\|_{L^p(\Om)}.
$$
\end{lemma}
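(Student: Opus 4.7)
The plan is to factor the rational function so that Lemma \ref{lem: raional function estimates} can be applied directly. Writing $r(z) = z\,\tilde r(z)$ with $\tilde r(z) = p(z)/\hat p(z)$, the rational function $\tilde r$ now satisfies exactly the hypotheses of Lemma \ref{lem: raional function estimates}: its numerator has degree $q$ and its denominator has degree $q+1$, with $\hat p$ having no roots in the right half-plane. Hence
\[
\|\tilde r(-k\Delta) h\|_{L^p(\Om)} \le C\,\|h\|_{L^p(\Om)} \qquad \forall\, h\in L^p(\Om),
\]
uniformly in $k>0$.

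Next I would argue, via the functional calculus, that on elements $g$ with $\Delta g\in L^p(\Om)$ one has the operator identity
\[
r(-k\Delta)\,g \;=\; -k\,\tilde r(-k\Delta)\,\Delta g.
\]
The cleanest way is to use the partial fraction decomposition $\tilde r(z) = \sum_{i=0}^{q} \frac{c_i}{z-z_i}$ (treating repeated roots by a small perturbation argument if necessary, exactly as in the proof of Lemma \ref{lem: raional function estimates}). For each simple factor one has
\[
(-k\Delta - z_i)^{-1}\Delta g \;=\; \Delta\,(-k\Delta - z_i)^{-1} g,
\]
because the resolvent commutes with $\Delta$ on its domain: if $h=(-k\Delta-z_i)^{-1}g$ then $h\in D(\Delta)$ and applying $\Delta$ to $(-k\Delta-z_i)h=g$ (which is licit since $\Delta g\in L^p(\Om)$ gives the required regularity) yields $(-k\Delta-z_i)\Delta h = \Delta g$, i.e.\ $\Delta h = (-k\Delta-z_i)^{-1}\Delta g$. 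Summing over $i$ with weights $c_i$ and multiplying by $-k\Delta$ (which pulls through as $-k$ on one side and as the factor $z$ from $r(z)=z\tilde r(z)$ on the other) gives the asserted identity.

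With that identity in hand the estimate follows at once:
\[
\|r(-k\Delta)g\|_{L^p(\Om)}
\;=\; k\,\|\tilde r(-k\Delta)\,\Delta g\|_{L^p(\Om)}
\;\le\; Ck\,\|\Delta g\|_{L^p(\Om)},
\]
where the last inequality is Lemma \ref{lem: raional function estimates} applied to $h=\Delta g\in L^p(\Om)$.

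The only genuine obstacle is the functional-calculus bookkeeping in the second step: one must verify that pulling $\Delta$ past $\tilde r(-k\Delta)$ is legitimate on the class of $g$ considered, and handle possible multiplicities in the roots of $\hat p$. Both are standard resolvent-commutation arguments (essentially reducing to the case of a single resolvent $(-k\Delta-z_i)^{-1}$ via partial fractions), so no new analytic tool beyond \eqref{eq: continuous resolvent} and Lemma \ref{lem: raional function estimates} is required.
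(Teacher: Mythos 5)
Your proposal is correct and follows essentially the same route as the paper: factor $r(z)=z\,\tilde r(z)$ with $\tilde r=p/\hat p$, commute $\Delta$ through $\tilde r(-k\Delta)$ to get $r(-k\Delta)g=-k\,\tilde r(-k\Delta)\Delta g$, and apply Lemma~\ref{lem: raional function estimates}. The extra care you take with the resolvent-commutation bookkeeping is fine but not a different argument.
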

\begin{proof}
This lemma is just a consequence of the previous one. We set $\tilde r(z)=\frac{p(z)}{\hat{p}(z)}$ and obtain:
\[
r(-k\Delta)g = -k \Delta \, \tilde r(-k\Delta) g = -k \, \tilde r(-k\Delta) \Delta g.
\]
The the result follows by Lemma~\ref{lem: raional function estimates}.
\end{proof}
%%%%%%%%%%%%%%%%%%%%%%%%%%
\begin{lemma}\label{lem: raional function estimates one more}
Let the rational function $r(z)$ be of the form
$r(z)= \frac{zp(z)}{\hat{p}(z)},$
where $\hat{p}(z)$ is a polynomial of degree $q+1$ with no roots on the right half complex plane and $p(z)$ is a polynomial of degree $q$, for some $q\geq 1$.
 Then, there exists a constant $C$ independent of $k$, such that for any $g\in L^p(\Om)$
\begin{equation}
\|r(-k\Delta) g\|_{L^p(\Om)}\le C\|g\|_{L^p(\Om)}.
\end{equation}
\end{lemma}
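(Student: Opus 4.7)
The plan is to reduce this to Lemma \ref{lem: raional function estimates} by splitting off the part of $r$ that survives at infinity. The key observation is that the numerator $zp(z)$ has degree $q+1$, which matches the degree $q+1$ of the denominator $\hat p(z)$, so $r(z)$ is bounded at infinity (unlike the setting of Lemma \ref{lem: raional function estimates}, where the numerator degree was strictly less). Performing a polynomial division, I would write
\[
r(z) = \frac{zp(z)}{\hat p(z)} = c + \frac{s(z)}{\hat p(z)},
\]
where $c \in \mathbb{C}$ is the ratio of leading coefficients and $s(z)$ is a polynomial of degree at most $q$.

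With this decomposition, the operator splits as
\[
r(-k\Delta) g = c\,g + \tilde r(-k\Delta) g, \qquad \tilde r(z) := \frac{s(z)}{\hat p(z)}.
\]
The rational function $\tilde r$ has numerator of degree at most $q$ and denominator of degree $q+1$ with no roots in the right half-plane, so it falls exactly within the hypotheses of Lemma \ref{lem: raional function estimates}. Applying that lemma yields $\|\tilde r(-k\Delta) g\|_{L^p(\Om)} \le C\|g\|_{L^p(\Om)}$ with a constant independent of $k$, and the constant term contributes trivially $\|c\,g\|_{L^p(\Om)} = |c|\,\|g\|_{L^p(\Om)}$. Combining these by the triangle inequality gives the claimed bound.

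I do not expect any real obstacle here; the only thing to check is that the constant $c$ is indeed independent of $k$, which is clear since $r$ is a fixed rational function and the splitting $r = c + s/\hat p$ does not depend on $k$ (the scaling $z \mapsto -k\Delta$ is applied after the decomposition). It is worth noting why the assumption $q \ge 1$ in this lemma (versus $q \ge 0$ in Lemma \ref{lemma: rational with z}) is compatible: we still only need $\hat p$ of degree $q+1 \ge 2$ and $p$ of degree $q \ge 1$, so the auxiliary polynomial $s$ has degree at most $q$ and the previous lemma applies verbatim.
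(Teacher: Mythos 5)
Your proof is correct, but it takes a genuinely different and more elementary route than the paper. You exploit that $zp(z)$ and $\hat p(z)$ have the same degree $q+1$, perform the polynomial division $zp(z)=c\,\hat p(z)+s(z)$ with $\deg s\le q$, and reduce to Lemma~\ref{lem: raional function estimates} plus the trivial bound for the identity; the only thing to verify is that the partial-fraction argument in that lemma tolerates a numerator of degree at most $q$ rather than exactly $q$, which it does. The paper instead writes $r(-k\Delta)g=-k\Delta\,\tilde r(-k\Delta)g$ with $\tilde r=p/\hat p$ and invokes the smoothing estimate $\|\Delta \tilde r(-k\Delta)g\|_{L^p(\Om)}\le Ck^{-1}\|g\|_{L^p(\Om)}$ from Eriksson--Johnson--Larsson, whose proof uses a different splitting (peeling off a simple pole so that the remainder has numerator degree $\le q-1$) together with the Dunford--Taylor formula. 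Your argument is self-contained within the paper (it needs only the resolvent estimate~\eqref{eq: continuous resolvent} through Lemma~\ref{lem: raional function estimates}) and avoids the external smoothing estimate entirely; as a side effect it does not actually require $q\ge 1$, so your closing remark about why $q\ge 1$ is ``compatible'' is unnecessary rather than load-bearing. The paper's route buys nothing extra for this particular lemma, since the cited smoothing estimate is a strictly stronger statement that is invoked elsewhere anyway; your decomposition is the cheaper way to get exactly what is claimed.
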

\begin{proof}
We set $\tilde r(z)=\frac{p(z)}{\hat{p}(z)}$ and obtain:
\[
\|r(-k\Delta) g\|_{L^p(\Om)} \le k\|\Delta \tilde r(-k\Delta) g\|_{L^p(\Om)}.
\]
The estimate
\[
\|\Delta \tilde r(-k\Delta) g\|_{L^p(\Om)} \le \frac{C}{k} \|g\|_{L^p(\Om)}
\]
is provided on the top of page 1322 in \cite{ErikssonK_JohnsonC_LarssonS_1998a} using a decomposition $r(z)=r_1(z)+r_2(z)$, where $r_1(z)=\frac{c}{z-z_0}$, with $z_0$ being a root of $\hat{p}(z)$ and $c$ such that the degree of the polynomial in the numerator of $r_2(z)$ is less or equal $q-1$. Then the estimate for $\Delta \tilde r_1(-k\Delta) g$ follows directly by applying a dG($0$) type argument and the term $\Delta \tilde r_2(-k\Delta) g$ is estimated using the Dunford-Taylor formula.
\end{proof}
%Now we are ready to prove
Next we provide some properties of the dG($q$) solutions of the homogeneous problem.
\begin{lemma}\label{lemma: monotonicity dG_r interior}
Let $u_k$ be the solution of \eqref{eq: dGr homogeneous equation} with $u_0 \in L^p(\Omega)$, $1\le p\le \infty$.  Then,
$$
\|u_{k}\|_{L^\infty(I_m;L^p(\Om))}\le C\|u_{0}\|_{L^p(\Om)}, \quad \forall m = 1,2,\dots,M.
$$
\end{lemma}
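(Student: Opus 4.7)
The plan is to reduce the $L^\infty(I_m;L^p(\Om))$ bound on $u_k$ to a uniform-in-$m$ stability estimate on the endpoint values $U^{m-1}_q$, and then establish that stability via a Dunford--Taylor integral representation together with the resolvent estimate~\eqref{eq: continuous resolvent}.

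First, by the expansion~\eqref{eq: formular of dGq of u_k on I_m} and the uniform boundedness of the Lagrange basis functions $\psi_l$ on $[0,1]$, I obtain
\[
\|u_k\|_{L^\infty(I_m;L^p(\Om))}\le C\sum_{l=0}^q \|U^m_l\|_{L^p(\Om)}.
\]
The one-step formulas in~\eqref{eq: one step dGr homogenesous} together with Lemma~\ref{lem: raional function estimates} applied to each $r_{l,0}$, which is of the form required by that lemma by~\eqref{eq: rational function rlj}, give $\|U^m_l\|_{L^p(\Om)}\le C\|U^{m-1}_q\|_{L^p(\Om)}$ for $m\ge 2$ and $\|U^1_l\|_{L^p(\Om)}\le C\|u_0\|_{L^p(\Om)}$. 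It therefore suffices to establish the uniform stability
\[
\Big\|\prod_{j=1}^{m-1}r_{q,0}(-k_j\Delta)u_0\Big\|_{L^p(\Om)}\le C\|u_0\|_{L^p(\Om)},
\]
with $C$ independent of $m$ and $k$.

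For this I would use the Dunford--Taylor representation
\[
\prod_{j=1}^{m-1}r_{q,0}(-k_j\Delta)u_0 \;=\; \frac{1}{2\pi i}\int_\Gamma \Big(\prod_{j=1}^{m-1}r_{q,0}(k_j z)\Big)(zI+\Delta)^{-1}u_0\,dz,
\]
choosing $\Gamma$ as the boundary of $\Sigma_{\theta_0}$ with $\gamma<\theta_0<\pi/2$, so that $\Gamma\subset\C\setminus\Sigma_\gamma$ encloses the spectrum $\sigma(-\Delta)\subset[0,\infty)$ and the resolvent estimate~\eqref{eq: continuous resolvent} applies on $\Gamma$, while each $z\in\Gamma$ satisfies $\operatorname{Re} z=|z|\cos\theta_0>0$. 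Since $r_{q,0}$ is a subdiagonal Pad\'e approximation of $e^{-\lambda}$, it is A-stable, i.e.\ $|r_{q,0}(w)|\le 1$ for $\operatorname{Re} w\ge 0$, and strongly A-stable with $|r_{q,0}(w)|=O(1/|w|)$ as $|w|\to\infty$.

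The main obstacle is to bound the resulting contour integral uniformly in $m$ and $k$, since $r_{q,0}(0)=1$ means each individual factor is of modulus one near the origin. I would handle this by splitting $\Gamma$ according to whether $|z|\le 1/t_{m-1}$ or $|z|>1/t_{m-1}$. In the inner region, the consistency property $|r_{q,0}(\lambda)-e^{-\lambda}|=O(|\lambda|^{2q+2})$ from~\eqref{eq: proprties of r_00} together with $\sum_{j=1}^{m-1}k_j=t_{m-1}$ allows me to compare the product with $e^{-t_{m-1}z}$, whose modulus decays like $e^{-t_{m-1}|z|\cos\theta_0}$; combined with the resolvent bound $O(1/(1+|z|))$ this yields a bounded contribution. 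In the outer region, the algebraic decay $|r_{q,0}(w)|=O(1/|w|)$ applied to at least one factor together with $|r_{q,0}(k_jz)|\le 1$ on the remaining ones gives sufficient decay of the integrand for the contour integral to be bounded independently of $m$ and $k$. Combining the two estimates yields the required uniform stability and completes the proof.
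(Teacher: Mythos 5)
The paper itself disposes of this lemma by citing \cite[Thm.~5.1]{ErikssonK_JohnsonC_LarssonS_1998a} and observing that the argument there carries over to $L^p$ once the resolvent estimate \eqref{eq: continuous resolvent} is available in $L^p$; your attempt to reconstruct that argument follows the same general route (reduce to stability of the product $\prod_{j}r_{q,0}(-k_j\Delta)$, then a Dunford--Taylor integral controlled by the resolvent estimate), and the reduction step via \eqref{eq: formular of dGq of u_k on I_m}, \eqref{eq: one step dGr homogenesous} and Lemma~\ref{lem: raional function estimates} is correct. However, the quantitative contour estimates as you describe them do not close, and this is where the real content of the cited proof lies.

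First, the inner region. On the boundary of $\Sigma_{\theta_0}$ with $|z|\le 1/t_{m-1}$ your comparison with $e^{-t_{m-1}z}$ gives $|{\textstyle\prod_j} r_{q,0}(k_jz)|\le Ce^{-ct_{m-1}|z|}$, but combined with the resolvent bound this yields a contribution of order
\[
\int_0^{1/t_{m-1}}\frac{e^{-ct_{m-1}\rho}}{1+\rho}\,d\rho \;\sim\; \ln\Bigl(1+\frac{1}{t_{m-1}}\Bigr),
\]
which is \emph{not} uniformly bounded: $t_{m-1}$ can be small while $m-1$ is large (many tiny steps), so you cannot fall back on applying Lemma~\ref{lem: raional function estimates} factor by factor either. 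The standard repair is to deform the contour away from the origin, replacing the portion $|z|\le 1/t_{m-1}$ by the circular arc $\{t_{m-1}^{-1}e^{i\phi}:|\phi|\le\theta_0\}$ (on which the product is bounded by $e\cdot e^{C}$, the resolvent by $Ct_{m-1}$, and the arc length is $2\theta_0/t_{m-1}$), or equivalently to subtract $e^{-t_{m-1}z}$ inside the integral — using that the difference vanishes like $\sum_j(k_j|z|)^{2q+2}$ near $z=0$ — and to bound $e^{-t_{m-1}\Delta}$ separately by analytic-semigroup theory (which itself needs the same contour deformation). Second, the outer region: bounding one factor by $C/(k_{j_0}|z|)$ and the others by $1$ gives $\int_{1/t_{m-1}}^\infty C(k\rho)^{-1}(1+\rho)^{-1}d\rho\sim Ct_{m-1}/k$, again unbounded as $k\to 0$. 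What is actually needed there is cumulative decay of the whole product, e.g.\ an estimate of the form $|r_{q,0}(w)|\le e^{-c\min(|w|,1)}$ on the rays $\arg w=\pm\theta_0$, so that $\prod_j|r_{q,0}(k_j\rho e^{\pm i\theta_0})|$ decays exponentially in $\rho$ (or in the number of factors) rather than merely algebraically through a single factor. With these two corrections your outline becomes the proof of \cite{ErikssonK_JohnsonC_LarssonS_1998a}; as written, both halves of the key stability bound are gapped.
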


\begin{proof}
The proof is given in \cite[~Thm. 5.1]{ErikssonK_JohnsonC_LarssonS_1998a} for the $L^2(\Om)$ norm, but the proof is valid for the $L^p(\Om)$ norm as well by using the resolvent estimate \eqref{eq: continuous resolvent} with respect to the $L^p(\Om)$ norm.
\end{proof}

\begin{theorem}[Homogeneous smoothing estimate]\label{thm: homogeneous smoothing dG_r}
Let $u_k$ be the solution of \eqref{eq: dGr homogeneous equation} with $u_0 \in L^p(\Omega)$, $1\le p\le \infty$. Then there exists a constant $C$ independent of $k$ such that
$$
\|\Delta u_k\|_{L^\infty(I_m; L^p(\Om))}\le \frac{C}{t_m}\|u_0\|_{L^p(\Om)}, \quad m=1,,2\dots,M.
$$
\end{theorem}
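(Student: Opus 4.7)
The argument proceeds in three stages: (i) reduce the $L^\infty$-in-time norm on $I_m$ to estimates at the temporal nodes $U_l^m$; (ii) represent each $\Delta U_l^m$ via a Dunford--Taylor contour integral built from the rational functions $r_{l,0}$, $r_{q,0}$; and (iii) estimate the resulting scalar integral using \eqref{eq: continuous resolvent} and the Pad\'e-approximation properties. This closely follows the $L^2$ argument given on pp.~1321--1322 of \cite{ErikssonK_JohnsonC_LarssonS_1998a}, with the only substantive change being that the resolvent bound is used in $L^p$ rather than $L^2$.

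\textbf{Reduction to nodal estimates.} Since $u_k|_{I_m}$ has the nodal expansion \eqref{eq: formular of dGq of u_k on I_m} in the fixed Lagrange basis $\{\psi_l\}_{l=0}^q \subset \Ppol{q}([0,1])$, the triangle inequality applied pointwise in $t$ gives
\[
\|\Delta u_k\|_{L^\infty(I_m;L^p(\Om))} \le C_q \max_{0\le l\le q}\|\Delta U_l^m\|_{L^p(\Om)},
\]
with $C_q = \max_{s\in[0,1]}\sum_{l=0}^q|\psi_l(s)|$ depending only on $q$. Hence it suffices to show $\|\Delta U_l^m\|_{L^p(\Om)} \le C t_m^{-1}\|u_0\|_{L^p(\Om)}$ for each $l\in\{0,\ldots,q\}$.

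\textbf{Operator representation.} By \eqref{eq: one step dGr homogenesous},
\[
U_l^m = r_{l,0}(-k_m\Delta)\,E_{k,m-1}\,u_0, \qquad E_{k,m-1} := \prod_{j=1}^{m-1}r_{q,0}(-k_j\Delta),
\]
with $E_{k,0}=I$. Since $-\Delta$ with Dirichlet data has spectrum in $[0,\infty)\subset \Sigma_\gamma$, the Dunford--Taylor calculus along a Hankel-type contour $\Gamma\subset \C\setminus\Sigma_\gamma$ encircling $\Sigma_\gamma$ gives
\[
\Delta U_l^m = -\frac{1}{2\pi i}\int_\Gamma \mu\, r_{l,0}(k_m\mu)\prod_{j=1}^{m-1}r_{q,0}(k_j\mu)\,(\mu+\Delta)^{-1}u_0\,d\mu.
\]
Taking the $L^p$-norm under the integral and invoking the resolvent estimate \eqref{eq: continuous resolvent} reduces the problem to the scalar bound
\[
\|\Delta U_l^m\|_{L^p(\Om)} \le C\|u_0\|_{L^p(\Om)}\int_\Gamma |r_{l,0}(k_m\mu)|\prod_{j=1}^{m-1}|r_{q,0}(k_j\mu)|\,|d\mu|,
\]
where the factor $|\mu|/(1+|\mu|)\le 1$ has been absorbed.

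\textbf{Contour estimate and main obstacle.} Parameterize $\Gamma$ by two rays from the origin at angles $\pm(\pi-\varphi)$ with $0<\varphi<\pi/2-\gamma$. On these rays each $k_j\mu$ lies in a sector of the right half-plane, and A-stability of $r_{q,0}$ together with its subdiagonal Pad\'e-approximation property \eqref{eq: proprties of r_00} yields a uniform estimate of the form $|r_{q,0}(k_j\mu)|\le C\exp(-c k_j|\mu|)$, while $|r_{l,0}(k_m\mu)|$ remains uniformly bounded. Telescoping the product then gives $\prod_{j=1}^{m-1}|r_{q,0}(k_j\mu)|\le C\exp(-c\,t_{m-1}|\mu|)$, and the change of variables $\rho = t_m|\mu|$ collapses the contour integral to $C/t_m$, which combined with the first two steps yields the theorem. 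The genuine technical burden is this last step: verifying the Pad\'e-type exponential decay on the chosen rays uniformly in $k_j$ and accounting for constants in a way that stays independent of $m$. This is exactly the calculation carried out for $p=2$ in \cite[pp.~1321--1322]{ErikssonK_JohnsonC_LarssonS_1998a}; because \eqref{eq: continuous resolvent} supplies the $L^p$ resolvent bound for the full range $1\le p\le \infty$, the same contour estimate transfers verbatim.
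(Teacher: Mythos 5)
Your proposal is correct and follows essentially the same route as the paper: the paper's proof of this theorem is simply a citation of \cite[Thm.~5.1]{ErikssonK_JohnsonC_LarssonS_1998a} together with the remark that the only analytic input, the resolvent estimate, is available in $L^p$ by \eqref{eq: continuous resolvent}, and your three-stage reconstruction (nodal reduction, Dunford--Taylor representation, contour estimate) is precisely the argument of that reference. The one point to watch is $m=1$: there the product over $j$ is empty, so the integrand decays only like $|\mu|^{-1}$ on the rays and the naive scalar bound diverges logarithmically; this case must instead be handled by the splitting $r=r_1+r_2$ of the rational function, as in Lemma~\ref{lem: raional function estimates one more}.
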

\begin{proof}
Again the proof is given in \cite[~Thm. 5.1]{ErikssonK_JohnsonC_LarssonS_1998a} for the $L^2(\Om)$ norm, but the proof is valid for the $L^p(\Om)$ norm as well by using the resolvent estimate \eqref{eq: continuous resolvent} with respect to the $L^p(\Om)$ norm.
\end{proof}
\begin{remark}\label{remark: homogoeneous smoothing termwise}
Notice that the statement of Theorem \ref{thm: homogeneous smoothing dG_r} is equivalent to
\begin{equation}
\|\Delta U_l^m\|_{L^p(\Om)}\le \frac{C}{t_m}\|u_0\|_{L^p(\Om)}, \quad  \quad m=1,2,\dots,M, \quad l=0,1,\dots, q,
\end{equation}
which we will use in the following proofs.
\end{remark}
\begin{remark}\label{remark: homogenous laplaca dG_r}
Let $u_k$ be the solution of \eqref{eq: dGr homogeneous equation}. Then there exists a constant $C$ independent of $k$ such that
$$
\|u^-_{k,m}\|_{L^p(\Om)}+(t_m-t_n)\|\Delta u_{k,m}\|_{L^\infty(I_m; L^p(\Om))}\le C\|u^-_{k,n}\|_{L^p(\Om)},\quad m>n,\quad  \quad n=1,2,\dots,M,
$$
or in terms of nodal values
\begin{equation}
\|U^{m}_{q}\|_{L^p(\Om)}+(t_m-t_n)\|\Delta U_l^m\|_{L^p(\Om)}\le C\|U^{n}_{q}\|_{L^p(\Om)},\quad m>n, \quad  \quad n=1,2,\dots,M, \quad l=0,1,\dots, q.
\end{equation}
\end{remark}

\begin{theorem}[Homogeneous smoothing estimate for jumps]\label{thm: homogeneous smoothing dG_r jumps}
Let $u_k$ be the solution of \eqref{eq: dGr homogeneous equation} with $u_0 \in L^p(\Omega)$, $1\le p\le \infty$. Then there exists a constant $C$ independent of $k$ such that
$$
\left\|\frac{[u_k]_{m-1}}{k_m} \right\|_{L^p(\Om)}\le \frac{C}{t_m}\|u_0\|_{L^p(\Om)}, \quad m=1,2,\dots,M,
$$
where $[u_k]_{0}=U_0^1-u_0$.
\end{theorem}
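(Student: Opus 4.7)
The plan is to express the jump via the rational function $r_{0,0}$ and then split the argument according to whether smoothing on $U_q^{m-1}$ is available ($m\ge 2$) or not ($m=1$). By \eqref{eq: one step dGr homogenesous} we have $U_0^m=r_{0,0}(-k_m\Delta)U_q^{m-1}$ for $m\ge 2$ and $U_0^1=r_{0,0}(-k_1\Delta)u_0$, so
\[
\frac{[u_k]_{m-1}}{k_m} \;=\; \frac{1}{k_m}\bigl(r_{0,0}(-k_m\Delta)-1\bigr)\,v_m, \qquad v_m=\begin{cases}u_0,& m=1,\\ U_q^{m-1},& m\ge 2.\end{cases}
\]
By property \eqref{eq: proprties of r_l0}, $r_{0,0}(z)-1=\dfrac{z\,\tilde p_0(z)}{\hat p(z)}$ with $\deg\tilde p_0=q$ and $\deg\hat p=q+1$.

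For $m=1$ we use that $t_1=k_1$, so a crude $L^p$ bound on the operator $\frac{1}{k_1}(r_{0,0}(-k_1\Delta)-1)$ already suffices. Indeed, applying Lemma~\ref{lem: raional function estimates one more} to the rational function $z\mapsto r_{0,0}(z)-1=\frac{z\tilde p_0(z)}{\hat p(z)}$ (or a direct calculation for $q=0$), we obtain
\[
\left\|\frac{[u_k]_0}{k_1}\right\|_{L^p(\Om)}\;=\;\frac{1}{k_1}\bigl\|(r_{0,0}(-k_1\Delta)-1)u_0\bigr\|_{L^p(\Om)}\;\le\;\frac{C}{k_1}\|u_0\|_{L^p(\Om)}\;=\;\frac{C}{t_1}\|u_0\|_{L^p(\Om)}.
\]

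The main obstacle is the case $m\ge 2$, where the analogous crude bound $C/k_m\cdot\|U_q^{m-1}\|_{L^p}$ is too weak (we want $C/t_m$, not $C/k_m$). The idea is to pull a Laplacian out of $r_{0,0}(-k_m\Delta)-1$ and then invoke the already-established smoothing on $U_q^{m-1}$. Writing $r_{0,0}(z)-1=z\,\frac{\tilde p_0(z)}{\hat p(z)}$ gives
\[
\frac{[u_k]_{m-1}}{k_m} \;=\; \frac{1}{k_m}\bigl(-k_m\Delta\bigr)\frac{\tilde p_0(-k_m\Delta)}{\hat p(-k_m\Delta)}\,U_q^{m-1} \;=\; -\frac{\tilde p_0(-k_m\Delta)}{\hat p(-k_m\Delta)}\,\Delta U_q^{m-1}.
\]
Since $\frac{\tilde p_0}{\hat p}$ has numerator degree $q$ and denominator degree $q+1$, Lemma~\ref{lem: raional function estimates} gives uniform $L^p$-boundedness of this operator, so
\[
\left\|\frac{[u_k]_{m-1}}{k_m}\right\|_{L^p(\Om)}\;\le\;C\,\bigl\|\Delta U_q^{m-1}\bigr\|_{L^p(\Om)}\;\le\;\frac{C}{t_{m-1}}\|u_0\|_{L^p(\Om)},
\]
where the last inequality is Remark~\ref{remark: homogoeneous smoothing termwise}. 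Finally, the mesh assumption~(ii) yields $t_m=t_{m-1}+k_m\le t_{m-1}+\kappa\,k_{m-1}\le(1+\kappa)t_{m-1}$, so $t_{m-1}^{-1}\le(1+\kappa)t_m^{-1}$, and the claim follows. The only non-routine ingredient is the algebraic extraction of $\Delta$ from $r_{0,0}(-k_m\Delta)-1$; everything else reduces to the resolvent-type Lemmas~\ref{lem: raional function estimates}--\ref{lem: raional function estimates one more} and the smoothing estimate already proved.
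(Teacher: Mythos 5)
Your proof is correct and follows essentially the same route as the paper: the jump is written as $(r_{0,0}(-k_m\Delta)-\operatorname{Id})U_q^{m-1}$, a factor of $\Delta$ is extracted via \eqref{eq: proprties of r_l0} (your inline computation is exactly the content of Lemma~\ref{lemma: rational with z}), and the smoothing estimate of Remark~\ref{remark: homogoeneous smoothing termwise} together with mesh condition (ii) finishes the case $m\ge 2$. The only cosmetic difference is at $m=1$, where the paper bounds $\|U_0^1-u_0\|_{L^p}$ by the $L^\infty(I_1;L^p)$-stability of Lemma~\ref{lemma: monotonicity dG_r interior} instead of a rational-operator bound; both are valid.
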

\begin{proof}
For $m>1$, using \eqref{eq: one step dGr homogenesous}, we have
$$
[u_k]_{m-1}=U^m_0-U^{m-1}_q=r_{0,0}(-k_m\Delta)U^{m-1}_q-U^{m-1}_q=(r_{0,0}(-k_m\Delta)-\operatorname{Id})U^{m-1}_q.
$$
Using \eqref{eq: proprties of r_l0} and Lemma \ref{lemma: rational with z},
we obtain
$$
\left\|\frac{[u_k]_{m-1}}{k_m} \right\|_{L^p(\Om)}\le C\left\|\Delta U^{m-1}_q \right\|_{L^p(\Om)}.
$$
Now by Remark \ref{remark: homogoeneous smoothing termwise} and the assumption on the time mesh $(ii)$, we obtain
$$
\left\|\Delta U^{m-1}_q \right\|_{L^p(\Om)}\le \frac{C}{t_{m-1}}\|u_0\|_{L^p(\Om)}\le  \frac{C}{t_{m}}\|u_0\|_{L^p(\Om)}.
$$
That finishes the proof for this case.

For $m=1$, by Lemma \ref{lemma: monotonicity dG_r interior} we have,
$$
\left\|\frac{[u_k]_{0}}{k_1} \right\|_{L^p(\Om)}=\frac{1}{k_1}\| U_0^1-u_0\|_{L^p(\Om)}\le \frac{C}{k_1}\| u_0\|_{L^p(\Om)}=\frac{C}{t_1}\| u_0\|_{L^p(\Om)}.
$$
\end{proof}

Similarly, we can obtain the corresponding result for the time derivative.
\begin{theorem}[Homogeneous smoothing estimate for time derivatives]\label{thm: homogeneous smoothing derivative dGr}
Let $u_k$ be the solution of \eqref{eq: dGr homogeneous equation} with $u_0 \in L^p(\Omega)$, $1\le p\le \infty$. Then there exists a constant $C$ independent of $k$ such that
$$
\|\pa_tu_k \|_{L^\infty(I_m; L^p(\Om))}\le \frac{C}{t_m}\|u_0\|_{L^p(\Om)}.
$$
\end{theorem}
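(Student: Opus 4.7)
The plan is to differentiate the Lagrange representation \eqref{eq: formular of dGq of u_k on I_m} directly, which yields
\[
\pa_t u_k|_{I_m}(t,x) = \frac{1}{k_m}\sum_{l=0}^q U^m_l(x)\,\psi_l'\!\left(\frac{t-t_{m-1}}{k_m}\right),
\]
and then to absorb the resulting $1/k_m$ prefactor by exploiting the fact that the Lagrange basis reproduces constants. This gives $\sum_{l=0}^q \psi_l \equiv 1$, hence $\sum_{l=0}^q \psi_l' \equiv 0$, which allows me to rewrite the sum with $U^m_l - U^m_0$ in place of $U^m_l$. The problem reduces to establishing
\[
\|\pa_t u_k\|_{L^\infty(I_m;L^p(\Om))} \le \frac{C}{k_m}\max_{0\le l\le q}\|U^m_l - U^m_0\|_{L^p(\Om)},
\]
with $C$ depending only on $\max_l\|\psi_l'\|_{L^\infty([0,1])}$, and then bounding the differences on the right.

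For $m\ge 2$, I would use identity \eqref{eq: one step dGr homogenesous} to write $U^m_l - U^m_0 = (r_{l,0}(-k_m\De) - r_{0,0}(-k_m\De))U^{m-1}_q$. Property \eqref{eq: proprties of r_l0} gives $r_{l,0}(0) = r_{0,0}(0) = 1$, so the difference $r_{l,0}-r_{0,0}$ has the form $\lambda\tilde p_l(\lambda)/\hat p(\lambda)$ for some polynomial $\tilde p_l$ of degree at most $q-1$. Lemma \ref{lemma: rational with z} then delivers the crucial extra factor of $k_m$, namely $\|U^m_l - U^m_0\|_{L^p(\Om)} \le Ck_m\|\De U^{m-1}_q\|_{L^p(\Om)}$. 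Applying Remark \ref{remark: homogoeneous smoothing termwise} gives $\|\De U^{m-1}_q\|_{L^p(\Om)} \le (C/t_{m-1})\|u_0\|_{L^p(\Om)}$, and mesh condition~(ii) forces $t_m \le (1+\kappa)t_{m-1}$. Combining these three ingredients produces the claimed bound $(C/t_m)\|u_0\|_{L^p(\Om)}$.

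The case $m=1$ has to be handled separately because there is no prior time step to smooth from. Here $U^1_l - U^1_0 = (r_{l,0}(-k_1\De) - r_{0,0}(-k_1\De))u_0$; the rational function still has the form $\lambda\tilde p_l(\lambda)/\hat p(\lambda)$, so Lemma \ref{lem: raional function estimates one more} applies directly (no $\De$ on $u_0$ needed) and yields $\|U^1_l - U^1_0\|_{L^p(\Om)} \le C\|u_0\|_{L^p(\Om)}$. Since $t_1 = k_1$, this gives $\|\pa_t u_k\|_{L^\infty(I_1;L^p(\Om))} \le (C/t_1)\|u_0\|_{L^p(\Om)}$, finishing the argument.

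The main technical point, and the step I expect to be the essential obstacle, is the conversion of the crude $1/k_m$ factor arising from time differentiation into the smoothing decay $1/t_m$. The entire maneuver hinges on the observation that the partition-of-unity identity $\sum_l \psi_l \equiv 1$ forces $\sum_l \psi_l' \equiv 0$, which permits replacing $r_{l,0}(-k_m\De)$ by $r_{l,0}(-k_m\De) - r_{0,0}(-k_m\De)$. Because both $r_{l,0}$ and $r_{0,0}$ equal $1$ at the origin, this difference carries a factor of $k_m\De$, and only then does the homogeneous smoothing estimate on $\De U^{m-1}_q$ from Theorem \ref{thm: homogeneous smoothing dG_r} furnish the desired $1/t_m$ decay.
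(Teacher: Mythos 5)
Your proposal is correct and follows essentially the same route as the paper: both hinge on the partition-of-unity identity $\sum_l \psi_l' \equiv 0$ together with $r_{l,0}(0)=1$ to extract a factor of $k_m\Delta$ acting on $U^{m-1}_q$, after which Lemma \ref{lemma: rational with z}, Remark \ref{remark: homogoeneous smoothing termwise} and mesh condition (ii) give the $1/t_m$ decay. The only (harmless) cosmetic differences are that the paper collects $\sum_l r_{l,0}(z)\psi_l'$ into a single rational function $z\tilde p_t(z)/\hat p(z)$ with time-dependent coefficients rather than subtracting $U^m_0$ termwise, and for $m=1$ it simply invokes the stability bound $\|U^1_l\|_{L^p(\Om)}\le C\|u_0\|_{L^p(\Om)}$ of Lemma \ref{lemma: monotonicity dG_r interior} instead of Lemma \ref{lem: raional function estimates one more}.
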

\begin{proof}
For $m>1$,
using \eqref{eq: formular of dGq of u_k on I_m} and \eqref{eq: one step dGr homogenesous}, we have
$$
\pa_t u_k|_{I_m}=k_m^{-1}\sum_{l=0}^q U^m_{l}(x)\psi_l'\left(\frac{t-t_{m-1}}{k_m}\right)=k_m^{-1}\sum_{l=0}^q r_{l,0}(-k_m\Delta) \psi_l'\left(\frac{t-t_{m-1}}{k_m}\right)U^{m-1}_{q}(x).
$$
By the fact that  $\sum_{l=0}^q \psi_l\left(\frac{t-t_{m-1}}{k_m}\right)=1$ we have $\sum_{l=0}^q \psi_l'\left(\frac{t-t_{m-1}}{k_m}\right)=0$. Using~\eqref{eq: proprties of r_l0}, i.e., $r_{l,0}(0)=1$ we obtain
$$
\sum_{l=0}^q r_{l,0}(z) \psi_l'\left(\frac{t-t_{m-1}}{k_m}\right)=\frac{z\tilde{p}_t(z)}{\hat{p}(z)},
$$
where $\hat{p}(z)$ is the same polynomial as in \eqref{eq: rational function rlj} and $\tilde{p}_t(z)$ is some polynomial of degree $q-1$ whose coefficients are time dependent, but uniformly bounded on $I_m$.
Thus again by Lemma \ref{lemma: rational with z},
we obtain
$$
\|\pa_t u_k\|_{L^\infty(I_m;L^p(\Om))}\le C\|\Delta U^{m-1}_{q}\|_{L^p(\Om)}.
$$
Remark \ref{remark: homogoeneous smoothing termwise} and the assumption on the time mesh $(ii)$, finishes the proof for $m>1$.

For $m=1$, by Lemma \ref{lemma: monotonicity dG_r interior} we have,
$$
\|\pa_tu_k \|_{L^\infty(I_1; L^p(\Om))}\le Ck_1^{-1}\sum_{l=0}^q \|U^1_l\|_{L^p(\Om)}\|\psi_l'\|_{L^\infty(I_1)}\le \frac{C}{t_1}\|u_0\|_{L^p(\Om)}.
$$
\end{proof}

\subsection{Results for the inhomogeneous problem}
In this section we establish properties of the dG($q$) solution $u_{k}\in X^q_k$ to the inhomogeneous parabolic equation with $u_0=0$, that satisfies,
\begin{equation}\label{eq: dGr nonhomogeneous equation}
B(u_k,\varphi_k)=(f,\varphi_k),\quad \forall \varphi_k\in X_k^q.
\end{equation}
Alternatively, on a single time interval $I_m$, we have
\begin{equation}\label{eq: dG(r) inhomogeneous one step}
\begin{aligned}
U^1_l &= k_1\sum_{j=0}^q r_{l,j}(-k_1\Delta)f^1_j, \quad l=0,1,\dots,q,\\
U^m_l &= r_{l,0}(-k_m\Delta )U^{m-1}_q+k_m\sum_{j=0}^q r_{l,j}(-k_m\Delta)f^m_j,\quad l=0,1,\dots,q,\quad m=2,3,\dots,M,
\end{aligned}
\end{equation}
where
$$
f^m_j(\cdot)=\frac{1}{k_m}\int_{I_m} f(t,\cdot)\psi_j\left(\frac{t-t_{m-1}}{k_m}\right)dt,
$$
and the rational functions
\begin{equation}\label{eq: rational functions r_lj}
r_{l,j}=\frac{p_{l,j}(\lambda)}{\hat{p}(\lambda)}, \quad l,j=0,1,\dots,q,
\end{equation}
are as in the homogenous case with $\hat{p}$ being a polynomial of degree $q+1$ with no roots on the right half complex plane  and $p_{l,j}$, $l,j=0,1,\dots,q$ being polynomials of degree $q$ (cf.~\cite{ErikssonK_JohnsonC_LarssonS_1998a}, page 1322).

Notice that for $m=1,2,\dots,M$,
\begin{equation}\label{eq: estimates for fjm}
\|f^m_j\|_{L^p(\Om)}\le C\|f\|_{L^\infty(I_m; L^p(\Om) )}\quad \text{and}\quad \|f^m_j\|_{L^p(\Om)}\le Ck_m^{-1}\|f\|_{L^1(I_m; L^p(\Om))}.
\end{equation}

\begin{theorem}[Maximal parabolic regularity]\label{thm: maximal parabolic regularity_dGr}
Let $u_k$ satisfy \eqref{eq: dGr nonhomogeneous equation} with $f \in L^s(I;L^p(\Om))$ for $1\le s,p\le \infty$. There exists a constant $C$ independent of $k$ and $f$ such that
$$
\|\Delta u_k\|_{L^s(I;L^p(\Om))}\le C\lk\|f\|_{L^s(I;L^p(\Om))}.
$$
\end{theorem}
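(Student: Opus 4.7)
The plan is to mirror the proof of Theorem~\ref{thm: maximal parabolic regularity}, substituting the nodal recurrence~\eqref{eq: dG(r) inhomogeneous one step} for the scalar one used in the dG($0$) case. Iterating this recurrence with $u_0=0$ gives the discrete Duhamel representation: for each $m\ge 1$ and each $l'\in\{0,1,\dots,q\}$,
\[
U^m_{l'} \;=\; k_m \sum_{j=0}^q r_{l',j}(-k_m\Delta)\, f^m_j \;+\; \sum_{l=1}^{m-1} k_l\, r_{l',0}(-k_m\Delta)\left(\prod_{i=l+1}^{m-1} r_{q,0}(-k_i\Delta)\right) \sum_{j=0}^q r_{q,j}(-k_l\Delta)\, f^l_j.
\]
Applying $\Delta$ and taking $L^p(\Om)$ norms reduces the task to estimating a diagonal ($l=m$) contribution and the off-diagonal ($l<m$) contributions.

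For the diagonal term, note that $z\,r_{l',j}(z) = z\,p_{l',j}(z)/\hat p(z)$ meets the hypothesis of Lemma~\ref{lem: raional function estimates one more}, which yields $\|k_m\,\Delta\, r_{l',j}(-k_m\Delta) f^m_j\|_{L^p(\Om)} \le C\|f^m_j\|_{L^p(\Om)}$. For the off-diagonal terms, first apply Lemma~\ref{lem: raional function estimates} to obtain $\|r_{q,j}(-k_l\Delta) f^l_j\|_{L^p(\Om)} \le C\|f^l_j\|_{L^p(\Om)}$, and then recognise the remaining operator $r_{l',0}(-k_m\Delta)\prod_{i=l+1}^{m-1} r_{q,0}(-k_i\Delta)$ as exactly the homogeneous dG($q$) propagator produced by iterating~\eqref{eq: one step dGr homogenesous} from subinterval $I_{l+1}$ up to node $l'$ on $I_m$, starting from the datum $r_{q,j}(-k_l\Delta)f^l_j$ placed at $t_l$. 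Remark~\ref{remark: homogenous laplaca dG_r} then supplies a smoothing bound of $C/(t_m-t_l)$, and mesh assumption~(ii) upgrades this to $C/(t_m-t_{l-1})$ since $k_l\le \kappa\,(t_m-t_l)$.

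Combining both cases and using $k_m=t_m-t_{m-1}$ to fold the diagonal term into the same pattern produces the dG($q$) analogue of~\eqref{eq: before Holder},
\[
\|\Delta U^m_{l'}\|_{L^p(\Om)} \le C\sum_{l=1}^m \frac{k_l}{t_m-t_{l-1}}\,\max_{0\le j\le q}\|f^l_j\|_{L^p(\Om)}.
\]
Because the Lagrange basis functions $\psi_{l'}$ are uniformly bounded on $I_m$, this upgrades to $\|\Delta u_k\|_{L^s(I_m;L^p(\Om))}\le C\,k_m^{1/s}\max_{l'}\|\Delta U^m_{l'}\|_{L^p(\Om)}$, and from here the argument follows the dG($0$) proof line for line: the endpoint cases $s=\infty$ and $s=1$ are handled via~\eqref{eq: estimates for fjm} together with the logarithmic sum bound~\eqref{eq: estimating sum by integral for log}, and (real or Riesz--Thorin) interpolation closes the remaining range $1<s<\infty$.

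The principal obstacle is the algebraic identification in the off-diagonal step---verifying that $r_{l',0}(-k_m\Delta)\prod_{i=l+1}^{m-1} r_{q,0}(-k_i\Delta)$ is precisely the dG($q$) homogeneous propagator from $t_l$ to the $l'$-th node of $I_m$, which is what legitimises invoking the homogeneous smoothing estimate with data placed at $t_l$ rather than at $t_0$. Once this identification is in place, all of the nontrivial work has already been done in Section~\ref{sec: dG0} and in the preceding lemmas of this section, and only bookkeeping over the indices $l,j,l'$ remains.
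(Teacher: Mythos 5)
Your proposal is correct and follows essentially the same route as the paper: the same discrete Duhamel representation (the paper writes it via $G^m_l=\sum_j r_{l,j}(-k_m\Delta)f^m_j$), the same use of Lemmas~\ref{lem: raional function estimates} and~\ref{lem: raional function estimates one more} for the diagonal and source terms, the same reduction of the off-diagonal sum to the homogeneous smoothing estimate of Remark~\ref{remark: homogenous laplaca dG_r}, and the same endpoint-plus-interpolation conclusion. Your indexing of the propagator product $\prod_{i=l+1}^{m-1}r_{q,0}(-k_i\Delta)$ is in fact the corrected form of the paper's formula~\eqref{eq: nonhomogeneous represnatation dgr}.
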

\begin{proof}
Using \eqref{eq: dG(r) inhomogeneous one step}, we have the following representation
\begin{equation}\label{eq: nonhomogeneous represnatation dgr}
 U_l^m=k_mG^m_l+r_{l,0}(-k_m\Delta)\sum_{n=1}^{m-1}k_n\left(\prod_{j=1}^{m-n-1}r_{q,0}(-k_{m-j-1}\Delta)\right)G^n_q,
\end{equation}
where
$$
G^m_l = \sum_{j=0}^qr_{l,j}(-k_m\Delta)f_j^m, \quad m=1,2,\dots,M.
$$
with the usual convention that $\prod_{j=1}^0$ is an empty product.
The proof now follows along the lines of Theorem \ref{thm: maximal parabolic regularity}. Taking the Laplacian of both sides we obtain
$$
 \Delta U_l^m=k_m\Delta G^m_l+\Delta r_{l,0}(-k_m\Delta)\sum_{n=1}^{m-1}k_n\left(\prod_{j=1}^{m-n-1}r_{q,0}(-k_{m-j-1}\Delta)\right)G^n_q,
$$
and as a result
$$
 \|\Delta U_l^m\|_{L^p(\Om)}\le \|k_m\Delta G^m_l\|_{L^p(\Om)}
+\left\|\Delta r_{l,0}(-k_m\Delta)\sum_{n=1}^{m-1}k_n\left(\prod_{j=1}^{m-n-1}r_{q,0}(-k_{m-j-1}\Delta)\right)G^n_q\right\|_{L^p(\Om)}.
$$
By Lemma \ref{lem: raional function estimates one more}, we have
\begin{subequations}
\begin{equation}\label{eq: estimates for kmDelta Glm in terms of fjm}
\|k_m\Delta G^m_l\|_{L^p(\Om)}\le C\max_{0\le j\le q}\|f^m_j\|_{L^p(\Om)}, \quad l=0,1\dots,q,
\end{equation}
and by Lemma \ref{lem: raional function estimates} we also have
\begin{equation}\label{eq: estimates for Glm in terms of fjm}
\| G^m_l\|_{L^p(\Om)}\le C\max_{0\le j\le q}\|f^m_j\|_{L^p(\Om)}, \quad l=0,1\dots,q.
\end{equation}
\end{subequations}
On the other hand by Remark \ref{remark: homogenous laplaca dG_r} for any $l=0,1,\dots,q$,
since each term in the sum on the right-hand side can be thought of as a homogeneous solution with initial condition $G_q^n$ at $t=t_{n-1}$, we have
\begin{equation}\label{eq: estimates for sum product G_n}
\left\|\Delta r_{l,0}(-k_m\Delta)\sum_{n=1}^{m-1}k_n\left(\prod_{j=1}^{m-n-1}r_{q,0}(-k_{m-j-1}\Delta)\right)G^n_q\right\|_{L^p(\Om)}\le
C\sum_{n=1}^{m-1}\frac{k_n}{t_m-t_{n-1}}\|G^n_q\|_{L^p(\Om)}.
\end{equation}
To establish the result for $s=\infty$, we observe
\begin{align*}
\|\Delta u_k\|_{L^\infty(I;L^p(\Om))}&=\max_{1\le m\le M}\max_{0\le l\le q}\|\Delta U_l^m\|_{L^p(\Om)}\\
&\le C\max_{1\le m\le M}\max_{0\le j\le q}\|f^m_j\|_{L^p(\Om)}
+C\max_{1\le m\le M}\sum_{n=1}^{m-1}\frac{k_n}{t_m-t_{n-1}}\|G^n_q\|_{L^p(\Om)}\\
&\le C\max_{1\le m\le M}\max_{0\le j\le q}\|f^m_j\|_{L^p(\Om)}\left(1+\max_{1\le m\le M}\sum_{n=1}^{m-1}\frac{k_n}{t_m-t_{n-1}}\right)\\
&\le C\ln{\frac{T}{k}}\max_{1\le m\le M}\max_{0\le j\le q}\|f^m_j\|_{L^p(\Om)},
\end{align*}
where in the last step we used \eqref{eq: estimating sum by integral for log}.
Using \eqref{eq: estimates for fjm} we can conclude that for $s=\infty$
$$
\|\Delta u_k\|_{L^\infty(I;L^p(\Om))}\le C\ln{\frac{T}{k}}\max_{1\le m\le M}\|f\|_{L^\infty(I_m;L^p(\Om))}\le C\ln{\frac{T}{k}}\|f\|_{L^\infty(I;L^p(\Om))}.
$$
Similarly, for $s=1$, we have
\begin{align*}
\|\Delta u_k\|_{L^1(I;L^p(\Om))}&\le \sum_{m=1}^Mk_m\max_{0\le l\le q}\|\Delta U_l^m\|_{L^p(\Om)}\\
&\le C\sum_{m=1}^Mk_m\max_{0\le j\le q}\|f^m_j\|_{L^p(\Om)}
+C\sum_{m=1}^Mk_m\sum_{n=1}^{m-1}\frac{k_n}{t_m-t_{n-1}}\|G^n_q\|_{L^p(\Om)}\\
&\le C\sum_{m=1}^Mk_m\max_{0\le j\le q}\|f^m_j\|_{L^p(\Om)}+C\sum_{m=1}^Mk_m\sum_{n=1}^{m-1}\frac{k_n}{t_m-t_{n-1}}\max_{0\le j\le q}\|f^n_j\|_{L^p(\Om)}\\
&\le C\sum_{m=1}^Mk_m\sum_{n=1}^{m}\frac{k_n}{t_m-t_{n-1}}\max_{0\le j\le q}\|f^n_j\|_{L^p(\Om)}.
\end{align*}
Changing the order of summation and using \eqref{eq: estimating sum by integral for log} we obtain,
\begin{align*}
\sum_{m=1}^Mk_m\sum_{n=1}^{m}\frac{k_n}{t_m-t_{n-1}}\max_{0\le j\le q}\|f^n_j\|_{L^p(\Om)}&\le \sum_{n=1}^Mk_n\max_{0\le j\le q}\|f^n_j\|_{L^p(\Om)}\sum_{m=n}^{M}\frac{k_m}{t_m-t_{n-1}}\\
&\le C\lk \sum_{n=1}^Mk_n\max_{0\le j\le q}\|f^n_j\|_{L^p(\Om)}.
\end{align*}
Thus, by using \eqref{eq: estimates for fjm}, we have
$$
\|\Delta u_k\|_{L^1(I;L^p(\Om))}\le C\ln{\frac{T}{k}}\sum_{m=1}^Mk_m\max_{0\le j\le q}\|f^m_j\|_{L^p(\Om)}\le C\ln{\frac{T}{k}}\|f\|_{L^1(I;L^p(\Om))}.
$$
Interpolating between $s=1$ and $s=\infty$ we obtain the result for any $1\le s\le\infty$.
\end{proof}
\begin{remark}
As in the case of dG($0$) the appearance of a logarithmic term is natural, since in contrast to the continuous case the choices $s,p \in \{1,\infty\}$ are allowed. The power of the logarithm can be improved for $p=2$ or $s=2$. In fact, we can obtain the following estimates,
$$
\|\Delta u_{k}\|_{L^s(I;L^2(\Om))}\le C\left(\lk\right)^{\frac{\abs{s-2}}{s}}\|f\|_{L^s(I;L^2(\Om))},
$$
and
$$
\|\Delta u_{k}\|_{L^2(I;L^p(\Om))}\le C\left(\lk\right)^{\frac{\abs{p-2}}{p}}\|f\|_{L^2(I;L^p(\Om))}.
$$

\end{remark}

\begin{theorem}[Maximal parabolic regularity for jumps]\label{thm: maximal parabolic regularity_dGr jumps}
Let $u_k$ satisfy \eqref{eq: dGr nonhomogeneous equation} with $f \in L^s(I;L^p(\Om))$ for $1\le s,p\le \infty$. Then there exists a constant $C$ independent of $k$ and $f$ such that
\begin{align*}
\max_{1\le m \le M}\left\|\frac{[u_k]_{m-1}}{k_m} \right\|_{L^p(\Om)}&\le C\lk\| f\|_{L^\infty(I;L^p(\Om))}, \quad \text{for } s= \infty,\\
\left(\sum_{m=1}^Mk_m\left\|\frac{[u_k]_{m-1}}{k_m} \right\|^s_{L^p(\Om)}\right)^{\frac{1}{s}}&\le C\lk\| f\|_{L^s(I;L^p(\Om))}, \quad \text{for } 1\le s< \infty.
\end{align*}
\end{theorem}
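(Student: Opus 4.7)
The plan is to mirror the strategy of Corollary~\ref{cor: maximal parabolic regularity jumps} by reducing the jump bound to an estimate for $\Delta u_k$, which is already supplied by Theorem~\ref{thm: maximal parabolic regularity_dGr}.

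First, using the one-step formula \eqref{eq: dG(r) inhomogeneous one step} with $l=0$ I would write, for $m\ge 2$,
\begin{equation*}
[u_k]_{m-1} = U^m_0 - U^{m-1}_q = (r_{0,0}(-k_m\Delta)-\Id)U^{m-1}_q + k_m \sum_{j=0}^q r_{0,j}(-k_m\Delta) f^m_j.
\end{equation*}
By property \eqref{eq: proprties of r_l0}, $r_{0,0}(z)-1 = z\tilde p_0(z)/\hat p(z)$, which has exactly the form required by Lemma~\ref{lemma: rational with z}, giving
\begin{equation*}
\|(r_{0,0}(-k_m\Delta)-\Id)U^{m-1}_q\|_{L^p(\Om)} \le C k_m \|\Delta U^{m-1}_q\|_{L^p(\Om)}.
\end{equation*}
Lemma~\ref{lem: raional function estimates} bounds each $\|r_{0,j}(-k_m\Delta)f^m_j\|_{L^p(\Om)}$ by $C\|f^m_j\|_{L^p(\Om)}$. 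Dividing by $k_m$ yields
\begin{equation*}
\left\|\frac{[u_k]_{m-1}}{k_m}\right\|_{L^p(\Om)} \le C\|\Delta U^{m-1}_q\|_{L^p(\Om)} + C\max_{0\le j\le q}\|f^m_j\|_{L^p(\Om)}.
\end{equation*}
For $m=1$ the formula reduces to $[u_k]_0 = U^1_0 = k_1\sum_j r_{0,j}(-k_1\Delta)f^1_j$, so the same inequality holds with the $\Delta U^{m-1}_q$ term dropped.

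For $s=\infty$ I would take the maximum over $m$. Since $U^{m-1}_q = u_{k,m-1}^-$, the first term is controlled by $\|\Delta u_k\|_{L^\infty(I;L^p(\Om))}$, which Theorem~\ref{thm: maximal parabolic regularity_dGr} bounds by $C\lk\|f\|_{L^\infty(I;L^p(\Om))}$. The source piece is absorbed by the first inequality in \eqref{eq: estimates for fjm}. For $1\le s<\infty$ I would raise the pointwise bound to the $s$-th power, multiply by $k_m$, and sum over $m$. The source contribution satisfies $\sum_m k_m \max_j \|f^m_j\|_{L^p(\Om)}^s \le C\|f\|^s_{L^s(I;L^p(\Om))}$ by Hölder's inequality applied to the averaging integral defining $f^m_j$. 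The $\Delta U^{m-1}_q$ contribution I would handle by an index shift combined with the time-mesh regularity (ii), which gives $\sum_{m=2}^M k_m \|\Delta U^{m-1}_q\|^s_{L^p(\Om)} \le \kappa \sum_{m=1}^{M-1} k_m \|\Delta U^m_q\|^s_{L^p(\Om)}$, and then identifying this nodal sum with $\|\Delta u_k\|^s_{L^s(I;L^p(\Om))}$ up to a fixed constant via norm equivalence on $\Ppol{q}([0,1])$; Theorem~\ref{thm: maximal parabolic regularity_dGr} then closes the estimate.

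The only mildly delicate step is the norm equivalence $k_m^{1/s}\max_{0\le l\le q}\|\Delta U^m_l\|_{L^p(\Om)} \sim \|\Delta u_k\|_{L^s(I_m;L^p(\Om))}$, which is purely a statement about the fixed finite-dimensional space $\Ppol{q}([0,1])$ and therefore holds with a constant independent of $k_m$. Taking the $s$-th root then yields both estimates claimed.
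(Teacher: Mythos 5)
Your proposal is correct. The opening is identical to the paper's argument: the same decomposition $[u_k]_{m-1}=(r_{0,0}(-k_m\Delta)-\Id)U^{m-1}_q+k_mG^m_0$ via \eqref{eq: dG(r) inhomogeneous one step}, the same use of \eqref{eq: proprties of r_l0} with Lemma~\ref{lemma: rational with z} and Lemma~\ref{lem: raional function estimates}, and the same intermediate bound $\|k_m^{-1}[u_k]_{m-1}\|_{L^p(\Om)}\le C\|\Delta U^{m-1}_q\|_{L^p(\Om)}+C\max_j\|f^m_j\|_{L^p(\Om)}$. Where you diverge is in how this is summed over $m$. The paper does \emph{not} invoke Theorem~\ref{thm: maximal parabolic regularity_dGr} as a black box; it re-expands $\|\Delta U^{m-1}_q\|_{L^p(\Om)}$ through the representation formula to get the pointwise bound $C\sum_{n=1}^{m}\frac{k_n}{t_m-t_{n-1}}\max_j\|f^n_j\|_{L^p(\Om)}$, handles $s=1$ and $s=\infty$ as in that theorem, and then treats $1<s<\infty$ by a direct H\"older inequality in the summation index (splitting $\lk$ into $(\lk)^{1/s'}$ and $(\lk)^{1/s}$ factors), rather than by interpolation. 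You instead reuse the conclusion of Theorem~\ref{thm: maximal parabolic regularity_dGr} directly, which requires two extra ingredients you correctly supply: the index shift $m\mapsto m-1$ absorbed by the time-mesh condition (ii), and the inverse estimate $\|\Delta U^{m}_q\|_{L^p(\Om)}\le Ck_m^{-1/s}\|\Delta u_k\|_{L^s(I_m;L^p(\Om))}$, which holds with a constant depending only on $q$ since it is a norm equivalence on the fixed finite-dimensional space $\Ppol{q}([0,1])$ (e.g., by representing nodal values through the dual basis on the reference interval and scaling). Your route is the natural dG($q$) analogue of how the paper proves the dG($0$) Corollary~\ref{cor: maximal parabolic regularity jumps} and is somewhat more economical; the paper's route avoids the inverse estimate and keeps the argument self-contained at the level of the representation formula, which is also what makes it transferable verbatim to the general-norm setting of Section~\ref{sec:general_norms}. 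Both yield the stated $\lk$ factor.
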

\begin{proof}
Using \eqref{eq: dG(r) inhomogeneous one step} and \eqref{eq: nonhomogeneous represnatation dgr}, we have the following representation for the jump terms
\begin{align*}
\frac{[u_k]_{m-1}}{k_m}&=\frac{U^m_0-U^{m-1}_q}{k_m}\\
&=G^m_0+k_m^{-1}\left(r_{0,0}(-k_m\Delta)U^{m-1}_q-U^{m-1}_q\right)=G^m_0+k_m^{-1}\left(r_{0,0}(-k_m\Delta)-\operatorname{Id}\right)U^{m-1}_q.
\end{align*}
Using  that $r_{0,0}-1$ satisfies \eqref{eq: proprties of r_l0} and using Lemma \ref{lemma: rational with z}, Lemma \ref{lem: raional function estimates},  and  proceeding similarly to the proof of Theorem \ref{thm: maximal parabolic regularity_dGr}, we have
\begin{equation}\label{eq: after apllying all inequalities}
\begin{aligned}
k_m^{-1}\|[u_k]_{m-1}\|_{L^p(\Om)}&\le C\left(\|G^m_0\|_{L^p(\Om)}+\|\Delta U^{m-1}_q\|_{L^p(\Om)}\right)\\
&\le C\max_{0\le j\le q}\|f^m_j\|_{L^p(\Om)}+C\sum_{n=1}^{m-1}\frac{k_n}{t_m-t_{n-1}}\max_{0\le j\le q}\|f^n_j\|_{L^p(\Om)}\\
&\le C\sum_{n=1}^{m}\frac{k_n}{t_m-t_{n-1}}\max_{0\le j\le q}\|f^n_j\|_{L^p(\Om)}.
\end{aligned}
\end{equation}
Now, the proof of the cases $s=1$ and $s=\infty$ is identical to the one of the  previous Theorem \ref{thm: maximal parabolic regularity_dGr} and we have
\begin{align*}
\max_{1\le m \le M}\left\|\frac{[u_k]_{m-1}}{k_m} \right\|_{L^p(\Om)}&\le C\lk\| f\|_{L^\infty(I;L^p(\Om))}, \quad 1\le p\le \infty,\\
\sum_{m=1}^Mk_m\left\|\frac{[u_k]_{m-1}}{k_m} \right\|_{L^p(\Om)}&\le C\lk\| f\|_{L^1(I;L^p(\Om))}, \quad 1\le p\le \infty.
\end{align*}
For $1<s<\infty$ using the H\"{o}lder inequality with $\frac{1}{s}+\frac{1}{s'}=1$, we obtain,
\begin{equation}\label{eq: after Holder}
\begin{aligned}
\left\|\frac{[u_k]_{m-1}}{k_m} \right\|_{L^p(\Om)}&\le C\sum_{n=1}^{m}\frac{k_n}{t_m-t_{n-1}}\max_{0\le j\le q}\|f^n_j\|_{L^p(\Om)}\\
&\le C\left(\sum_{n=1}^m  \frac{k_n}{t_m-t_{n-1}}\max_{0\le j\le q}\|f^n_j\|^s_{L^p(\Om)}\right)^{1/s}\left(\sum_{n=1}^m \frac{k_n}{t_m-t_{n-1}}\right)^{1/s'}\\
&\le C\left(\lk\right)^{1/s'}\left(\sum_{n=1}^m \frac{k_n}{t_m-t_{n-1}}\max_{0\le j\le q}\|f^n_j\|^s_{L^p(\Om)}\right)^{1/s}.
\end{aligned}
\end{equation}
Hence
$$
\sum_{m=1}^Mk_m\left\|\frac{[u_k]_{m-1}}{k_m} \right\|^s_{L^p(\Om)}\le C\left(\lk\right)^{s/s'}\sum_{m=1}^Mk_m\sum_{n=1}^m \frac{k_n}{t_m-t_{n-1}}\max_{0\le j\le q}\|f^n_j\|^s_{L^p(\Om)}.
$$
Changing the order of summation, we obtain
$$
\begin{aligned}
\sum_{m=1}^Mk_m\left\|\frac{[u_k]_{m-1}}{k_m} \right\|^s_{L^p(\Om)}&\le C\left(\lk\right)^{s/s'}\sum_{n=1}^M k_n\max_{0\le j\le q}\|f^n_j\|^s_{L^p(\Om)} \sum_{m=n}^M \frac{k_m}{t_m-t_{n-1}}\\&\le C\left(\lk\right)^{1+s/s'}\sum_{n=1}^M k_n\max_{0\le j\le q}\|f^n_j\|^s_{L^p(\Om)}=C\left(\lk\right)^{s}\|f\|^s_{L^s(I;L^p(\Om))}.
\end{aligned}
$$
Taking the $s$-root we finish the proof.
\end{proof}

\begin{theorem}\label{thm: time derivative maximal parabolic dGq}
Let $u_k$ satisfy \eqref{eq: dGr nonhomogeneous equation}. Then there exists a constant $C$ independent of $k$ and $f$ such that
$$
\left(\sum_{m=1}^M\|\pa_t u_k\|^s_{L^s(I_m;L^p(\Om))}\right)^{\frac{1}{s}}\le C\ln{\frac{T}{k}}\|f\|_{L^s(I;L^p(\Om))}, \quad 1\le s< \infty, \quad 1\le p\le \infty.
$$
\end{theorem}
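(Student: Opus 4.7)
The plan is to mimic almost verbatim the argument of Theorem~\ref{thm: maximal parabolic regularity_dGr jumps}. On the subinterval $I_m$ I would expand
\[
\pa_t u_k|_{I_m}(t) = k_m^{-1}\sum_{l=0}^q U_l^m\,\psi_l'\!\left(\tfrac{t-t_{m-1}}{k_m}\right)
\]
and insert the one-step representation $U_l^m = k_m G_l^m + r_{l,0}(-k_m\Delta)U_q^{m-1}$ from \eqref{eq: dG(r) inhomogeneous one step} (with the convention $U_q^0:=0$ so that the case $m=1$ is covered). Exactly as in the proof of Theorem~\ref{thm: homogeneous smoothing derivative dGr}, the identities $\sum_l\psi_l'(\cdot)=0$ and $r_{l,0}(0)=1$ from \eqref{eq: proprties of r_l0} force
\[
\sum_{l=0}^q\psi_l'\!\left(\tfrac{t-t_{m-1}}{k_m}\right)r_{l,0}(z) = \frac{z\,\tilde p_t(z)}{\hat p(z)},
\]
with $\tilde p_t$ a polynomial of degree at most $q-1$ whose coefficients are uniformly bounded in $t\in I_m$. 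Lemma~\ref{lemma: rational with z}, the boundedness of $\psi_l'$ on $[0,1]$, and \eqref{eq: estimates for Glm in terms of fjm} then yield
\[
\|\pa_t u_k(t)\|_{L^p(\Om)}\le C\max_{0\le j\le q}\|f_j^m\|_{L^p(\Om)} + C\|\Delta U_q^{m-1}\|_{L^p(\Om)}, \qquad t\in I_m.
\]

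Applying \eqref{eq: estimates for kmDelta Glm in terms of fjm}, \eqref{eq: estimates for Glm in terms of fjm}, and \eqref{eq: estimates for sum product G_n} to the representation \eqref{eq: nonhomogeneous represnatation dgr} of $U_q^{m-1}$ --- exactly as in the proofs of Theorems~\ref{thm: maximal parabolic regularity_dGr} and~\ref{thm: maximal parabolic regularity_dGr jumps} --- produces
\[
\|\Delta U_q^{m-1}\|_{L^p(\Om)} \le C\sum_{n=1}^{m-1}\frac{k_n}{t_{m-1}-t_{n-1}}\max_{0\le j\le q}\|f_j^n\|_{L^p(\Om)},
\]
where the term $n=m-1$ absorbs the source contribution $k_{m-1}\Delta G_q^{m-1}$. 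Combined with the trivial bound $\|\pa_t u_k\|_{L^s(I_m;L^p(\Om))}^s \le Ck_m\sup_{t\in I_m}\|\pa_t u_k(t)\|_{L^p(\Om)}^s$ and with \eqref{eq: estimates for fjm} applied to the source piece, the whole problem reduces to establishing
\[
\sum_{m=2}^M k_m\|\Delta U_q^{m-1}\|_{L^p(\Om)}^s \le C\left(\lk\right)^s\|f\|_{L^s(I;L^p(\Om))}^s.
\]

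This last estimate is a direct replay of the Hölder computation \eqref{eq: after Holder}: I would apply Hölder with exponents $s$ and $s'$ to the inner sum, then invoke the analogues
\[
\sum_{n=1}^{m-1}\frac{k_n}{t_{m-1}-t_{n-1}} \le C\lk \qquad\text{and}\qquad \sum_{m=n+1}^M \frac{k_m}{t_{m-1}-t_{n-1}} \le C\lk
\]
of \eqref{eq: estimating sum by integral for log} --- both follow from \eqref{eq: estimating sum by integral for log} itself after an index shift and the mesh regularity $k_m\le\kappa k_{m-1}$ from assumption~(ii) --- and swap the order of summation. The resulting prefactor $(\lk)^{s/s'+1} = (\lk)^s$ multiplies $\sum_n k_n\max_j\|f_j^n\|_{L^p(\Om)}^s \le C\|f\|_{L^s(I;L^p(\Om))}^s$ by \eqref{eq: estimates for fjm}, and taking $s$-th roots completes the proof. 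The only delicate point is the uniform-in-$t$ factorization of the symbol $\sum_l\psi_l'\,r_{l,0}$ asserted in the first paragraph, which must be checked carefully so that the constant in Lemma~\ref{lemma: rational with z} is independent of $t$; everything else is routine given the machinery already assembled.
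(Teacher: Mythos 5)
Your proposal follows essentially the same route as the paper's proof: expand $\pa_t u_k|_{I_m}$ in the Lagrange basis, substitute the one-step representation, use $\sum_l\psi_l'=0$ together with $r_{l,0}(0)=1$ to factor out a $z$ from the symbol so that Lemma~\ref{lemma: rational with z} gives $\|\pa_t u_k\|_{L^\infty(I_m;L^p(\Om))}\le C\|\Delta U_q^{m-1}\|_{L^p(\Om)}+C\max_j\|f_j^m\|_{L^p(\Om)}$, and then finish exactly as in Theorems~\ref{thm: maximal parabolic regularity_dGr} and~\ref{thm: maximal parabolic regularity_dGr jumps} via the weighted sums, H\"older, and summation exchange. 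The details you flag (uniformity in $t$ of the symbol's coefficients, the $m=1$ convention, and the comparability of $t_{m-1}-t_{n-1}$ with $t_m-t_{n-1}$ under mesh assumption (ii)) all check out, so the argument is correct and matches the paper.
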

\begin{proof}
Similarly to the proof of Theorem \ref{thm: homogeneous smoothing dG_r jumps},
using \eqref{eq: formular of dGq of u_k on I_m} and \eqref{eq: dG(r) inhomogeneous one step}, we have
$$
\begin{aligned}
\pa_t u_k|_{I_m}&=k_m^{-1}\sum_{l=0}^q U^m_{l}(x)\psi_l'\left(\frac{t-t_{m-1}}{k_m}\right)+\sum_{l=0}^q G^m_{l}(x)\psi_l'\left(\frac{t-t_{m-1}}{k_m}\right)\\
&=k_m^{-1}\sum_{l=0}^q r_{l,0}(-k_m\Delta) \psi_l'\left(\frac{t-t_{m-1}}{k_m}\right)U^{m-1}_{q}(x)+\sum_{l=0}^q G^m_{l}(x)\psi_l'\left(\frac{t-t_{m-1}}{k_m}\right).
\end{aligned}
$$
Using \eqref{eq: proprties of r_l0} and  $\sum_{l=0}^q \psi_l'\left(\frac{t-t_{m-1}}{k_m}\right)=0$, we can conclude that
$$
\sum_{l=0}^q r_{l,0}(z) \psi_l'\left(\frac{t-t_{m-1}}{k_m}\right)=\frac{z\tilde{p}_t(z)}{\hat{p}(z)},
$$
where $\hat{p}(z)$ is the same polynomial as in \eqref{eq: rational function rlj} and $\tilde{p}_t(z)$ is some polynomial of degree $q$ whose coefficients are time dependent, but uniformly bounded on $I_m$.
Thus again by Lemma \ref{lemma: rational with z} and Lemma \ref{lem: raional function estimates one more},
we obtain
$$
\|\pa_t u_k\|_{L^\infty(I_m;L^p(\Om))}\le C\|\Delta U^{m-1}_{q}\|_{L^p(\Om)}+C\max_{0\le j\le q}\|f^m_j\|_{L^p(\Om)}.
$$
The rest of the proof is identical to the proof of the previous theorem.
\end{proof}

\subsection{Application to optimal order error estimates.}

As an application of the maximal parabolic regularity, we show optimal convergence rates for the dG($q$) solution. First, we establish that the error is bounded by a certain projection error. A similar result was obtained for the ${L^2(I;L^2(\Om))}$ norm in \cite{DMeidner_BVexler_2007a}. First, we define a projection $\pi_k$ for $u \in C(I,L^2(\Omega))$ with $\pi_k u|_{I_m} \in P_q(L^2(\Omega))$ for $m=1,2,\dots,M$ on each subinterval $I_m$ by
\begin{subequations}\label{eq: projection pi_k}
\begin{align}
(\pi_k u-u,\phi)_{I_m\times \Omega}&=0,\quad \forall \phi\in P_{q-1}(I_m,L^2(\Omega)),\quad q>0,\label{eq: projection pi_k first}\\
\pi_k u(t_m^-)=u(t_m^-) \label{eq: projection pi_k second}.
\end{align}
\end{subequations}
In the case $q = 0$, $\pi_ku$ is defined solely by the second condition.
\begin{theorem}\label{th:semi_discrete_error_proj}
Let $u$ be the solution to \eqref{eq: heat equation} with $u \in C(\bar I; L^p(\Omega))$ and $u_k$ be its dG($q$) approximation~\eqref{eq: semidiscrete heat with RHS}, for $q\geq 0$. Then there exists a constant $C$ independent of $k$ such that
$$
\|u-u_k\|_{L^s(I;L^p(\Om))}\le C\lk \|u-\pi_k u\|_{L^s(I;L^p(\Om))},\quad 1\le s,p<\infty,
$$
where the projection $\pi_k$ is defined above in \eqref{eq: projection pi_k}.
\end{theorem}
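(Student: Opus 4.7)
The plan is a standard duality argument in the spirit of Meidner--Vexler, but using the new fully-discrete maximal parabolic regularity estimate to control the dual side in $L^{s'}(I; L^{p'}(\Om))$ rather than in $L^2(I; L^2(\Om))$. Split the error as $u - u_k = \eta + \xi$ where $\eta = u - \pi_k u$ and $\xi = \pi_k u - u_k \in X_k^q$. By the triangle inequality it suffices to bound $\|\xi\|_{L^s(I;L^p(\Om))}$ by $C\lk\|\eta\|_{L^s(I;L^p(\Om))}$.

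To bound $\xi$, I would use duality: $\|\xi\|_{L^s(I;L^p(\Om))} = \sup_g (\xi,g)_{I\times\Om}$ over $g \in L^{s'}(I;L^{p'}(\Om))$ with norm one. For each such $g$, let $z_k \in X_k^q$ be the dG($q$) solution of the \emph{adjoint} problem
\[
B(\varphi_k, z_k) = (\varphi_k, g)_{I\times\Om} \quad \forall\, \varphi_k \in X_k^q,
\]
which, using the dual representation~\eqref{eq:B_dual}, is precisely a dG($q$) discretisation of the backward parabolic problem with right-hand side $g$ and zero terminal condition. After the time reversal $t \mapsto T - t$, Theorem~\ref{thm: maximal parabolic regularity_dGr} applies and gives
\[
\|\Delta z_k\|_{L^{s'}(I;L^{p'}(\Om))} \le C\lk \|g\|_{L^{s'}(I;L^{p'}(\Om))}.
\]
Now Galerkin orthogonality yields $B(u - u_k, z_k) = 0$, so $(\xi, g)_{I\times\Om} = B(\xi, z_k) = -B(\eta, z_k)$. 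Using the dual representation~\eqref{eq:B_dual} of $B(\eta, z_k)$ together with the two defining properties~\eqref{eq: projection pi_k} of $\pi_k$, namely $\eta(t_m^-) = 0$ for $m=1,\dots,M$ and $(\eta,\phi)_{I_m\times\Om} = 0$ for $\phi \in P_{q-1}(I_m, L^2(\Om))$ (applied with $\phi = \partial_t z_k|_{I_m}$), every term except the gradient one collapses. Thus
\[
B(\eta, z_k) = (\nabla \eta, \nabla z_k)_{I\times\Om}.
\]

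The last step is to integrate by parts in space. Since $z_k(t,\cdot) \in H_0^1(\Om)$ and, by the adjoint maximal parabolic regularity estimate above, $\Delta z_k(t,\cdot) \in L^{p'}(\Om)$ for a.e.\ $t$, we have
\[
(\nabla \eta, \nabla z_k)_{I\times\Om} = -(\eta, \Delta z_k)_{I\times\Om},
\]
interpreted as a $L^p$--$L^{p'}$ pairing. Hölder's inequality in space and time followed by the adjoint maximal parabolic regularity estimate give
\[
(\xi, g)_{I\times\Om} \le \|\eta\|_{L^s(I;L^p(\Om))} \|\Delta z_k\|_{L^{s'}(I;L^{p'}(\Om))} \le C\lk \|\eta\|_{L^s(I;L^p(\Om))} \|g\|_{L^{s'}(I;L^{p'}(\Om))}.
\]
Taking the supremum over $g$ and combining with the triangle inequality, using $k \le T/4$ so that $\lk \ge \ln 4$, finishes the proof.

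\textbf{Main obstacle.} The routine algebra is the $\pi_k$-orthogonality collapsing $B(\eta,z_k)$ to the pure gradient term. The substantive point is the integration by parts in space: it requires $z_k(t,\cdot)$ to have $\Delta z_k \in L^{p'}(\Om)$ uniformly in an integrable-in-time sense, which is exactly the content we obtain by applying Theorem~\ref{thm: maximal parabolic regularity_dGr} to the (time-reversed) adjoint dG problem. One should also verify that Theorem~\ref{thm: maximal parabolic regularity_dGr} really is symmetric under time reversal; this follows since the dG($q$) trial/test space, the bilinear form structure, and the time mesh conditions (i)--(iii) are invariant under $t \mapsto T-t$, so the maximal parabolic regularity for the adjoint problem is the same as for the forward problem.
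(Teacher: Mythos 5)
Your proposal is correct and follows essentially the same duality argument as the paper: the same splitting $u-u_k=\eta+\xi$, the same discrete adjoint problem $B(\varphi_k,z_k)=(\varphi_k,g)$, the same collapse of $B(\eta,z_k)$ via the two defining properties of $\pi_k$, and the same application of Theorem~\ref{thm: maximal parabolic regularity_dGr} to bound $\Delta z_k$. The only cosmetic difference is that the paper dualizes the full error $e$ and splits the pairing into $J_1+J_2$ rather than applying the triangle inequality first; your explicit remark on the time-reversal symmetry of the adjoint problem is a point the paper leaves implicit.
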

\begin{proof}
Put $e:=u-u_k=(u-\pi_k u)+(\pi_ku-u_k):=\eta_k+\xi_k$.
For $1\le s,p<\infty$, we have
$$
\|e\|_{L^s(I;L^p(\Om))}=\sup_{\psi\in L^{s'}(I;L^{p'}(\Om))\atop{\|\psi\|_{L^{s'}(I;L^{p'}(\Om))}=1}}(e,\psi)_{I\times\Om}, \quad \frac{1}{s}+\frac{1}{s'}=1,\quad \frac{1}{p}+\frac{1}{p'}=1.
$$
For each such $\psi$, we consider a dual problem for $z_k \in X_k^q$ satisfying
$$
B(\varphi_k, z_k)=(\varphi_k, \psi)_{I\times\Om} \quad \text{for all }\; \varphi_k \in X_k^q.
$$
Thus, we have
$$
(e,\psi)_{I\times\Om}=(\eta_k,\psi)_{I\times\Om}+(\xi_k,\psi)_{I\times\Om}:=J_1+J_2.
$$
Using the H\"{o}lder inequality, we find
$$
J_1\le \|\eta_k\|_{L^s(I;L^p(\Om))}\|\psi\|_{L^{s'}(I;L^{p'}(\Om))}\le \|\eta_k\|_{L^s(I;L^p(\Om))}.
$$
On the other hand using that $B(u-u_k,\chi_k)=0$ for any $\chi_k\in X_k^q$,  we obtain
\begin{align*}
J_2=B(\xi_k, z_k)=-B(\eta_k, z_k)&=\sum_{m=1}^M (\eta_k,\pa_t z_k)_{I_m\times\Om}-(\na\eta_k,\na z_k)_{I_m\times\Om}+(\eta_{k,m}^-,[z_k]_m)_\Om\\&=-(\na\eta_k,\na z_k)_{I\times\Om},
\end{align*}
where we used that the first sum vanishes due to \eqref{eq: projection pi_k first} and the sum involving jumps due to \eqref{eq: projection pi_k second}. Integrating by parts in space, using the H\"{o}lder inequality and Theorem \ref{thm: maximal parabolic regularity_dGr}, we obtain
\begin{align*}
J_2=-(\na\eta_k,\na z_k)_{I\times\Om}&=(\eta_k,\Delta z_k)_{I\times\Om}\le \|\eta_k\|_{L^s(I;L^p(\Om))}\|\Delta z_k\|_{L^{s'}(I;L^{p'}(\Om))}\\
&\le C\lk\|\eta_k\|_{L^s(I;L^p(\Om))}\|\psi\|_{L^{s'}(I;L^{p'}(\Om))}\le C\lk\|\eta_k\|_{L^s(I;L^p(\Om))}.
\end{align*}
Combining the estimates for $J_1$ and $J_2$ we obtain the result.
\end{proof}

If the exact solution is sufficiently smooth then the above result easily leads to an optimal convergence rate, modulo a logarithmic term.
\begin{corollary}
Let $u\in W^{q+1,s}(I;L^p(\Om))$  be the solution to \eqref{eq: heat equation} and $u_k$ be its dG($q$) approximation for $q\geq 0$. Then there exists a constant $C$ independent of $k$ such that
$$
\|u-u_k\|_{L^s(I;L^p(\Om))}\le Ck^{q+1}\lk \|u\|_{W^{q+1,s}(I;L^p(\Om))},\quad 1\le s,p<\infty.
$$
\end{corollary}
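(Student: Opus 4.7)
The plan is to combine Theorem \ref{th:semi_discrete_error_proj} with a standard approximation estimate for the temporal projection $\pi_k$. Since Theorem \ref{th:semi_discrete_error_proj} already gives
\[
\|u-u_k\|_{L^s(I;L^p(\Om))}\le C\lk\,\|u-\pi_k u\|_{L^s(I;L^p(\Om))},
\]
it suffices to show the projection error estimate
\begin{equation}\label{eq: proj_plan}
\|u-\pi_k u\|_{L^s(I;L^p(\Om))}\le C k^{q+1} \|u\|_{W^{q+1,s}(I;L^p(\Om))}.
\end{equation}

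First, I would work subinterval by subinterval. On $I_m=(t_{m-1},t_m]$, the projection $\pi_k$ is characterized by the $q$ orthogonality conditions \eqref{eq: projection pi_k first} against $P_{q-1}$ test polynomials (void for $q=0$) together with the endpoint interpolation $\pi_k u(t_m^-)=u(t_m^-)$. These are exactly the conditions defining a Gauss--Radau-type projection from $W^{q+1,s}(I_m;L^p(\Om))$ onto $P_q(I_m;L^p(\Om))$. Writing everything on the reference interval $[0,1]$ via the affine map $t\mapsto (t-t_{m-1})/k_m$, the projection is preserved by polynomials of degree $q$, so a standard Bramble--Hilbert argument applied pointwise in $x$ yields
\[
\|u-\pi_k u\|_{L^s(I_m;L^p(\Om))}\le C\,k_m^{q+1}\,\|\pa_t^{q+1} u\|_{L^s(I_m;L^p(\Om))},
\]
with a constant independent of $m$ thanks to the mesh regularity assumption (ii).

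Next, raising the local bound to the $s$-th power and summing over $m$, or taking the supremum when $s=\infty$, gives the global estimate
\[
\|u-\pi_k u\|_{L^s(I;L^p(\Om))}^s
=\sum_{m=1}^M \|u-\pi_k u\|_{L^s(I_m;L^p(\Om))}^s
\le C k^{s(q+1)} \sum_{m=1}^M \|\pa_t^{q+1} u\|_{L^s(I_m;L^p(\Om))}^s
= C k^{s(q+1)}\|\pa_t^{q+1}u\|_{L^s(I;L^p(\Om))}^s,
\]
which is precisely \eqref{eq: proj_plan}. Substituting into the bound from Theorem \ref{th:semi_discrete_error_proj} yields the claimed estimate.

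The only nontrivial step is verifying the Bramble--Hilbert-type estimate for $\pi_k$; but since $\pi_k$ reproduces polynomials of degree $q$ in time and involves only $q+1$ linear conditions that are continuous on $W^{q+1,s}(I_m;L^p(\Om))$ (via the trace at $t_m$ and $L^2$-pairings with polynomial test functions, both of which are bounded by Sobolev embedding in the one-dimensional time variable), this is entirely standard. I do not anticipate a substantive obstacle beyond carrying out this scaling argument cleanly and accounting for the $L^p(\Omega)$-valued setting, which causes no difficulty since the time projection acts pointwise in $x$.
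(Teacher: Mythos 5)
Your proposal is correct and follows exactly the route the paper intends: the corollary is stated as an immediate consequence of Theorem \ref{th:semi_discrete_error_proj} combined with the standard $O(k^{q+1})$ approximation property of the Gauss--Radau-type projection $\pi_k$, which the paper leaves unproved as a standard fact. Your scaling/Bramble--Hilbert argument for the projection error is the right way to fill in that step (and in fact the constant is independent of $m$ by the affine scaling alone, without even needing mesh condition (ii)).
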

\begin{remark}
The above result can be extended to the case of non-homogeneous Dirichlet boundary conditions. Let $g \in C(I;L^2(\Omega))\cap L^2(I;H^1(\Omega))$ be given and consider the equation
\begin{equation*}
\begin{aligned}
\pa_tu(t,x)-\Delta u(t,x) &= f(t,x), & (t,x) &\in \IOm,\;  \\
    u(t,x) &= g(t,x),    & (t,x) &\in I\times\pa\Omega, \\
   u(0,x) &= u_0(x),    & x &\in \Omega.
\end{aligned}
\end{equation*}
It turns out, that it is convenient to use $\pi_k g$ as boundary conditions for the semidiscrete solution, i.e.
\[
u_k \in \pi_k g +X_k^q \quad:\quad B(u_k,\varphi_k)=(f,\varphi_k)_{\IOm}+(u_0,\varphi_{k,0}^+)_\Om \quad \text{for all }\; \varphi_k\in X_k^q.
\]
Then following the lines of the proof of Theorem~\ref{th:semi_discrete_error_proj} and using that $\xi_k = \pi_k u - u_k$ has homogeneous boundary conditions, i.e., $\xi_k \in X_k^q$, we obtain
\[
(\xi_k,\psi)_{I \times \Omega} = - (\nabla \eta_k, \nabla z_k) = (\eta_k, \Delta z_k)_{I \times \Omega} + \int_I \int_{\partial \Omega} (g - \pi_k g) \partial_n z_k \, ds\, dt.
\]
Under an additional assumption on $\Omega$ that for any $v \in H^1_0(\Omega)$ with $\Delta v \in L^{p'}(\Omega)$ the estimate
\[
\norm{\partial_n v}_{L^{p'}(\partial \Omega)} \le c \norm{\Delta v}_{L^{p'}(\Omega)}
\]
holds, we obtain
\[
\|u-u_k\|_{L^s(I;L^p(\Om))}\le C\lk \left(\|u-\pi_k u\|_{L^s(I;L^p(\Om))} + \norm{g-\pi_k g}_{L^s(I;L^p(\partial \Om))}\right),\quad 1\le s,p<\infty.
\]
The above assumption is fulfilled, for example, if on $\Omega$ the $W^{2,p'}$ elliptic regularity holds.
\end{remark}

\section{Fully discrete solutions}\label{sec: fully discrete}

In this section, we consider the fully discrete approximation of the equation \eqref{eq: heat equation}.
 From now on we assume that the  domain $\Om$ is a polygonal/polyhedral convex domain. For $h \in (0, h_0]$; $h_0 > 0$, let $\mathcal{T}$  denote  a quasi-uniform triangulation of $\Om$  with mesh size $h$, i.e., $\mathcal{T} = \{\tau\}$ is a partition of $\Om$ into cells (triangles or tetrahedrons) $\tau$ of diameter $h_\tau$ such that for $h=\max_{\tau} h_\tau$,
$$
\operatorname{diam}(\tau)\le h \le C |\tau|^{\frac{1}{d}}, \quad \forall \tau\in \mathcal{T},\quad d=2,3,
$$
hold. Let $V_h$ be the set of all functions in $H^1_0(\Om)$ that are polynomials of degree $r$ on each $\tau$, i.e., $V_h$ is the usual space of conforming finite elements.
To obtain the fully discrete approximation we consider the space-time finite element space
\begin{equation} \label{def: space_time}
\Xkh=\{v_{kh} :\ v_{kh}|_{I_m}\in \Ppol{q}(V_h), \ m=1,2,\dots,M, \quad q\geq 0,\quad r\geq 1 \}.
\end{equation}
We define a fully discrete analog $u_{kh} \in \Xkh$ of $u_k$ introduced in  \eqref{eq: semidiscrete heat with RHS} by
\begin{equation}\label{eq: semi fully discrete heat with RHS}
B(u_{kh},\varphi_{kh})=(f,\varphi_{kh})_{\IOm}+(u_0,\varphi_{kh}^+)_\Om \quad \text{for all }\; \varphi_{kh}\in \Xkh.
\end{equation}
Moreover, we introduce the discrete Laplace operator $\Delta_h \colon V_h \to V_h$ by
\[
(-\Delta_h v_h,\chi)_{\Om} = (\nabla v_h,\nabla \chi)_{\Om}, \quad \forall \chi\in V_h.
\]
The semidiscrete results from the first part of the paper translate almost immediately to the fully discrete setting provided we have the corresponding resolvent estimate,
\begin{equation}\label{eq: discrete resolvent}
\|(z+\Delta_h)^{-1}\chi\|_{L^p(\Om)}\le  \frac{C}{1+|z|}\|\chi\|_{L^p(\Om)},\quad {\forall z\in \mathbb{C}\backslash \Sigma_{\gamma}}, \quad\forall \chi\in V_h, \quad 1\le p\le \infty,
\end{equation}
with some constant $C$ independent of $h$.
Such a result  was established in \cite{BakaevNY_ThomeeV_WahlbinLB_2003a} for smooth domains. Later it was extended to convex polyhedral domains in \cite{LiB_SunW_2015a} (for some $\gamma>0$)  via stability  and smoothing properties of the  semigroup $E_h(t)=e^{-\Delta_ht}$ and directly for an arbitrary $\gamma>0$ but with logarithmic dependence of the constant $C$ on $h$  in \cite{LeykekhmanD_VexlerB_2015c}.

\subsection{Result for the homogeneous problem}
\medskip
Let $u_{kh}\in \Xkh$ be the fully discrete dG($q$)cG($r$) solution to the parabolic equation with $f\equiv 0$, i.e.
\begin{equation}\label{eq: dGr homogeneous equation fully}
B(u_{kh},\varphi_{kh})=(u_0,\varphi^+_{kh,0}),\quad \forall \varphi_{kh}\in \Xkh.
\end{equation}
\begin{theorem}[Fully discrete homogeneous smoothing estimate]\label{thm: homogeneous smoothing dG_r fully discrete}
Let $u_{kh}$ be a solution of \eqref{eq: dGr homogeneous equation fully} with $u_0 \in L^p(\Omega)$, $1\le p\le \infty$. Then there exists a constant $C$ independent of $k$ and $h$ such that
$$
\|\pa_tu_{kh} \|_{L^\infty(I_m; L^p(\Om))}+\|\Delta_h u_{kh}\|_{L^\infty(I_m; L^p(\Om))}+k_m^{-1}\|[u_{kh}]_{m-1}\|_{L^p(\Om)}\le \frac{C}{t_m}\|u_0\|_{L^p(\Om)},
$$
for $m=1,2,\dots,M$.
\end{theorem}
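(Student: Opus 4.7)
The plan is to observe that the fully discrete dG($q$)cG($r$) solution obeys exactly the same recurrence structure in the nodal values as the semidiscrete one, with the continuous Laplacian $\Delta$ systematically replaced by the discrete Laplacian $\Delta_h$ and the initial datum $u_0$ replaced by its $L^2(\Om)$ projection $P_hu_0$ onto $V_h$. Testing \eqref{eq: dGr homogeneous equation fully} with $\varphi_{kh}=\chi_h\psi_l((t-t_{m-1})/k_m)$ for $\chi_h\in V_h$, and integrating by parts in space using the definition of $\Delta_h$, one obtains, analogously to \eqref{eq: one step dGr homogenesous},
\[
U^1_l=r_{l,0}(-k_1\Delta_h)P_hu_0,\qquad U^m_l=r_{l,0}(-k_m\Delta_h)U^{m-1}_q,\quad m\ge 2,\quad l=0,1,\dots,q,
\]
where $U^m_l\in V_h$ are the nodal coefficients defined as in \eqref{eq: formular of dGq of u_k on I_m}, and $r_{l,0}$ are the same rational functions as in the semidiscrete case.

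The next step is to reestablish the three rational-function lemmas (Lemma~\ref{lem: raional function estimates}, Lemma~\ref{lemma: rational with z}, and Lemma~\ref{lem: raional function estimates one more}) with $\Delta_h$ in place of $\Delta$. Their proofs use only the partial fraction decomposition together with the resolvent bound; thus, inserting the discrete resolvent estimate \eqref{eq: discrete resolvent} in place of \eqref{eq: continuous resolvent} yields, for every $\chi\in V_h$,
\[
\|r(-k\Delta_h)\chi\|_{L^p(\Om)}\le C\|\chi\|_{L^p(\Om)},\qquad \|\tilde r(-k\Delta_h)\chi\|_{L^p(\Om)}\le Ck\|\Delta_h\chi\|_{L^p(\Om)},
\]
with the same constants that appear in the semidiscrete case. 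With these discrete versions in hand, the proofs of Theorem~\ref{thm: homogeneous smoothing dG_r}, Theorem~\ref{thm: homogeneous smoothing dG_r jumps} (via Remark~\ref{remark: homogoeneous smoothing termwise} and the identity $[u_{kh}]_{m-1}=(r_{0,0}(-k_m\Delta_h)-\Id)U^{m-1}_q$), and Theorem~\ref{thm: homogeneous smoothing derivative dGr} (via the identity $\sum_{l=0}^q r_{l,0}(z)\psi_l'=z\tilde p_t(z)/\hat p(z)$) go through verbatim, producing
\[
\|\Delta_hU^m_l\|_{L^p(\Om)}+k_m^{-1}\|[u_{kh}]_{m-1}\|_{L^p(\Om)}+\|\pa_tu_{kh}\|_{L^\infty(I_m;L^p(\Om))}\le \frac{C}{t_m}\|P_hu_0\|_{L^p(\Om)}.
\]

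Finally, to replace $\|P_hu_0\|_{L^p(\Om)}$ by $\|u_0\|_{L^p(\Om)}$ on the right-hand side, we invoke the $L^p$ stability of the $L^2$ projection $P_h$ on quasi-uniform meshes for $1\le p\le\infty$, giving $\|P_hu_0\|_{L^p(\Om)}\le C\|u_0\|_{L^p(\Om)}$. Combining the three contributions and observing that $\|\Delta_hu_{kh}\|_{L^\infty(I_m;L^p(\Om))}\le C\max_{0\le l\le q}\|\Delta_hU^m_l\|_{L^p(\Om)}$ (since $u_{kh}|_{I_m}$ is polynomial in time with uniformly bounded Lagrange basis) yields the claimed bound. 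The main potential obstacle is purely bookkeeping, namely that the intermediate smoothing statements (analogues of Remark~\ref{remark: homogenous laplaca dG_r}) must also be transcribed to $\Delta_h$, and that one must keep track of the fact that the first step carries $P_hu_0$ rather than $u_0$; both are resolved by the $L^p$ stability of $P_h$ and the fact that all identities invoked depend only on the algebraic structure of the dG time stepping, not on the specific operator.
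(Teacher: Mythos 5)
Your proposal is correct and is essentially the paper's own argument: the paper gives no separate proof of this theorem, stating only that the semidiscrete results of Section~\ref{sec: dGq} ``translate almost immediately'' to the fully discrete setting once the discrete resolvent estimate \eqref{eq: discrete resolvent} replaces \eqref{eq: continuous resolvent}, which is exactly the substitution $\Delta\to\Delta_h$, $u_0\to P_hu_0$ you carry out through the rational-function lemmas and the nodal recurrence. Your additional observation that one needs the $L^p$-stability of $P_h$ on quasi-uniform meshes to pass from $\|P_hu_0\|_{L^p(\Om)}$ back to $\|u_0\|_{L^p(\Om)}$ is a correct and useful piece of bookkeeping that the paper leaves implicit (compare the general-norm version, Theorem~\ref{lemma: homogeneous smoothing dG_r fully discrete general}, which keeps $P_hu_0$ on the right-hand side).
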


\subsection{Results for the inhomogeneous problem}
Let $u_{kh}\in \Xkh$ be the dG($q$)cG($r$) solution to the inhomogeneous parabolic equation with $u_0=0$, i.e.
\begin{equation}\label{eq: dGr nonhomogeneous equation fully}
B(u_{kh},\varphi_{kh})=(f,\varphi_{kh}),\quad \forall \varphi_{kh}\in \Xkh.
\end{equation}
\begin{theorem}[Fully discrete maximal parabolic regularity]\label{thm: maximal parabolic regularity_dGr fully discrete}
Let $u_{kh}$ satisfy \eqref{eq: dGr nonhomogeneous equation fully} with $f \in L^s(I;L^p(\Omega))$, $1\le s,p\le \infty$. Then there exists a constant $C$ independent of $k$ and $h$ such that
$$
\left(\sum_{m=1}^M\|\pa_t u_{kh}\|^s_{L^s(I_m;L^p(\Om))}\right)^{\frac{1}{s}}+\|\Delta_h u_{kh}\|_{L^s(I;L^p(\Om))}+\left(\sum_{m=1}^Mk_m\left\|\frac{[u_{kh}]_{m-1}}{k_m} \right\|^s_{L^p(\Om)}\right)^{\frac{1}{s}}\le C\lk\|f\|_{L^s(I;L^p(\Om))},
$$
with obvious notation changes in the case of $s=\infty.$
\end{theorem}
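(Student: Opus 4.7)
The plan is to imitate the semidiscrete arguments from Section~\ref{sec: dGq} line by line, with the Laplacian $\Delta$ systematically replaced by the discrete Laplacian $\Delta_h$. The essential point is that every analytic tool used in Theorems~\ref{thm: maximal parabolic regularity_dGr}, \ref{thm: maximal parabolic regularity_dGr jumps} and~\ref{thm: time derivative maximal parabolic dGq} rested on exactly two ingredients: the resolvent estimate~\eqref{eq: continuous resolvent} and the homogeneous smoothing estimate (Remark~\ref{remark: homogenous laplaca dG_r}). Both ingredients have their fully discrete analogs at our disposal: the discrete resolvent estimate~\eqref{eq: discrete resolvent} and the fully discrete smoothing bound Theorem~\ref{thm: homogeneous smoothing dG_r fully discrete}.

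First, I would rewrite~\eqref{eq: dGr nonhomogeneous equation fully} step by step. Testing against basis functions supported in a single $I_m$ gives, in complete analogy with~\eqref{eq: dG(r) inhomogeneous one step},
\[
U^m_l = r_{l,0}(-k_m\Delta_h)U^{m-1}_q + k_m \sum_{j=0}^q r_{l,j}(-k_m \Delta_h) P_h f^m_j, \qquad l=0,1,\dots,q,
\]
where $P_h:L^2(\Om)\to V_h$ is the $L^2$-projection and the rational functions $r_{l,j}$ are exactly the same as before. Iterating as in~\eqref{eq: nonhomogeneous represnatation dgr} yields the fully discrete representation
\[
U^m_l = k_m G^m_l + r_{l,0}(-k_m\Delta_h)\sum_{n=1}^{m-1} k_n \Bigl(\prod_{j=1}^{m-n-1} r_{q,0}(-k_{m-j-1}\Delta_h)\Bigr) G^n_q,
\]
with $G^m_l=\sum_{j=0}^q r_{l,j}(-k_m\Delta_h)P_h f^m_j$.

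Next I would verify the discrete counterparts of Lemma~\ref{lem: raional function estimates}, Lemma~\ref{lemma: rational with z}, and Lemma~\ref{lem: raional function estimates one more}. Their proofs used only the partial fraction decomposition and the resolvent estimate~\eqref{eq: continuous resolvent}; substituting~\eqref{eq: discrete resolvent} gives the same bounds for $r(-k\Delta_h)$ acting on $V_h$. Combined with the $L^p$-stability of $P_h$ on quasi-uniform meshes for $1\le p\le \infty$, this yields
\[
\|k_m \Delta_h G^m_l\|_{L^p(\Om)} + \|G^m_l\|_{L^p(\Om)} \le C \max_{0\le j\le q}\|f^m_j\|_{L^p(\Om)},
\]
the fully discrete analog of~\eqref{eq: estimates for kmDelta Glm in terms of fjm}--\eqref{eq: estimates for Glm in terms of fjm}. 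Theorem~\ref{thm: homogeneous smoothing dG_r fully discrete} then provides the discrete version of Remark~\ref{remark: homogenous laplaca dG_r}, namely the smoothing bound
\[
(t_m-t_{n-1})\|\Delta_h U^m_l\|_{L^p(\Om)} \le C \|G^n_q\|_{L^p(\Om)}
\]
for each homogeneous summand, since that term is precisely the fully discrete homogeneous solution with initial datum $G^n_q$ imposed at $t=t_{n-1}$.

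With these pieces in place, the estimates for $\|\Delta_h u_{kh}\|_{L^s(I;L^p(\Om))}$ follow verbatim from the proof of Theorem~\ref{thm: maximal parabolic regularity_dGr} by establishing the cases $s=\infty$ and $s=1$ through the kernel inequality $\sum_{n=1}^m k_n/(t_m-t_{n-1})\le C\lk$ and then interpolating. The jump bound
\[
\frac{[u_{kh}]_{m-1}}{k_m} = G^m_0 + k_m^{-1}(r_{0,0}(-k_m\Delta_h)-\Id)U^{m-1}_q
\]
is controlled exactly as in Theorem~\ref{thm: maximal parabolic regularity_dGr jumps} via~\eqref{eq: proprties of r_l0} and the discrete rational-function lemma, and the $\partial_t u_{kh}$ bound follows the argument of Theorem~\ref{thm: time derivative maximal parabolic dGq} using the identity $\sum_l r_{l,0}(z)\psi_l'(\cdot) = z\tilde p_t(z)/\hat p(z)$ applied to $\Delta_h$.

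The only genuinely delicate point, which is the main obstacle, is making sure the constants remain independent of $h$ while we invoke rational-function calculus with $\Delta_h$. This reduces to the $h$-independence of the discrete resolvent estimate~\eqref{eq: discrete resolvent} on a sector $\C\setminus\Sigma_\gamma$ and the $L^p$-stability of $P_h$; both are available on quasi-uniform meshes in convex polyhedra by the cited results. Once these are taken as input, every remaining step is a copy of the corresponding semidiscrete computation and the combined estimate with the logarithmic factor $\lk$ follows.
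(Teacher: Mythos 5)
Your proposal is correct and follows exactly the route the paper intends: the paper gives no separate proof for this theorem, stating only that the semidiscrete arguments of Section~\ref{sec: dGq} carry over verbatim once the discrete resolvent estimate~\eqref{eq: discrete resolvent} replaces~\eqref{eq: continuous resolvent}, which is precisely your plan. Your additional explicit attention to the $L^p$-stability of $P_h$ (needed because the data enters as $P_h f^m_j$ in the fully discrete representation) is a point the paper leaves implicit but is handled correctly here.
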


\subsection{Application to optimal order error estimates.}

Similarly to the semidiscrete case, as an application of the maximal parabolic regularity, we show optimal convergence rates for the dG($q$)cG($r$) solution.
\begin{theorem}
Let $u$  be the solution to \eqref{eq: heat equation} with $u \in C(\bar I; L^p(\Omega))$ and $u_{kh}$ be the dG($q$)cG($r$) solution for $q\geq 0$ and $r\geq 1$. Then there exists a constant $C$ independent of $k$ and $h$ such that for $1\le s,p<\infty$,
$$
\|u-u_{kh}\|_{L^s(I;L^p(\Om))}\le C\lk \left(\|u-\pi_k u\|_{L^{s}(I;L^{p}(\Om))}+\|P_hu-u\|_{L^{s}(I;L^{p}(\Om))}+\|R_hu-u\|_{L^{s}(I;L^{p}(\Om))}\right),
$$
where the projection $\pi_k$ is defined in \eqref{eq: projection pi_k}, $P_h \colon L^2(\Om)\to V_h$ is the orthogonal $L^2$ projection and $R_h \colon H^1_0(\Omega)\to V_h$ is the Ritz projection.
\end{theorem}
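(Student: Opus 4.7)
The proof extends the duality technique of Theorem~\ref{th:semi_discrete_error_proj} to the fully discrete setting, now invoking the fully discrete maximal parabolic regularity of Theorem~\ref{thm: maximal parabolic regularity_dGr fully discrete} for the discrete adjoint state. I would split the error as
\[
u - u_{kh} = (u - \pi_k R_h u) + (\pi_k R_h u - u_{kh}) =: \eta_{kh} + \xi_{kh},
\]
where $\xi_{kh} \in \Xkh$ and, crucially, $\pi_k$ (acting in time) commutes with $R_h$ (acting in space). By duality, it suffices to control $(u - u_{kh}, \psi)_{\IOm}$ for $\psi \in L^{s'}(I;L^{p'}(\Om))$ of unit norm, and for each such $\psi$ introduce $z_{kh} \in \Xkh$ solving $B(\varphi_{kh}, z_{kh}) = (\varphi_{kh}, \psi)_{\IOm}$ for all $\varphi_{kh} \in \Xkh$. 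Write $(u - u_{kh}, \psi) = (\eta_{kh}, \psi) + (\xi_{kh}, \psi)$.

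The first piece is directly bounded by $\|\eta_{kh}\|_{L^s(I;L^p(\Om))}$, which in turn is controlled by $\|u - R_h u\|_{L^s(I;L^p(\Om))} + \|R_h(u - \pi_k u)\|_{L^s(I;L^p(\Om))}$, and $L^p$-stability of the Ritz projection absorbs the latter into $\|u - \pi_k u\|_{L^s(I;L^p(\Om))}$. For the second piece, Galerkin orthogonality $B(u - u_{kh}, z_{kh}) = 0$ yields $(\xi_{kh}, \psi) = -B(\eta_{kh}, z_{kh})$. Expanding via the dual form~\eqref{eq:B_dual}, three structural reductions occur: (i) since $\partial_t z_{kh}|_{I_m} \in \Ppol{q-1}(V_h)$, combining the $\pi_k$-orthogonality~\eqref{eq: projection pi_k first} with $L^2$-orthogonality of $P_h$ against $V_h$ turns each $(\eta_{kh}, \partial_t z_{kh})_{I_m}$ into $(P_h u - R_h u, \partial_t z_{kh})_{I_m}$; (ii) the Ritz property eliminates $(\nabla(u - R_h u), \nabla z_{kh})$, so that $(\nabla \eta_{kh}, \nabla z_{kh}) = -(R_h(u - \pi_k u), \Delta_h z_{kh})_{\IOm}$ after discrete integration by parts; (iii) using~\eqref{eq: projection pi_k second} and $L^2$-orthogonality of $P_h$ against $[z_{kh}]_m \in V_h$, the boundary/jump terms $(\eta_{kh,m}^-, \cdot)$ collapse to $((P_h u - R_h u)(t_m), \cdot)$.

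Each contribution is then estimated by H\"older's inequality combined with Theorem~\ref{thm: maximal parabolic regularity_dGr fully discrete}, which simultaneously supplies the logarithmic factor $\lk$ for $\|\Delta_h z_{kh}\|_{L^{s'}(I;L^{p'}(\Om))}$, for the piecewise norm of $\partial_t z_{kh}$, and for the weighted jump norm $\big(\sum_m k_m \|[z_{kh}]_{m-1}/k_m\|^{s'}_{L^{p'}(\Om)}\big)^{1/s'}$. The right-hand side assembles into a combination of $\|u - \pi_k u\|_{L^s(I;L^p(\Om))}$, $\|P_h u - u\|_{L^s(I;L^p(\Om))}$, and $\|R_h u - u\|_{L^s(I;L^p(\Om))}$, with the last two entering jointly via the triangle inequality $\|P_h u - R_h u\| \le \|P_h u - u\| + \|u - R_h u\|$.

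The main technical obstacle is reducing the pointwise-in-time sums produced in step (iii) to the integrated $L^s(I;L^p(\Om))$ norms appearing in the theorem. A discrete H\"older step pairs $\sum_m k_m \|(P_h u - R_h u)(t_m)\|_{L^p(\Om)}^s$ against the weighted jump norm of $z_{kh}$; passing from the former discrete sum to the continuous norm $\|P_h u - R_h u\|_{L^s(I;L^p(\Om))}^s$ requires replacing the pointwise evaluations by local averages over $I_m$ and invoking Jensen's inequality, a step that relies essentially on the continuity assumption $u \in C(\bar I; L^p(\Om))$ (and on $L^p$-stability of $P_h$ and $R_h$) to ensure the pointwise values are controlled by their local averages up to a universal constant. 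Once this is arranged, the three pieces combine to give the claimed bound.
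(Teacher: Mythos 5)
Your overall strategy (duality, discrete adjoint, fully discrete maximal parabolic regularity) is the right one, but your choice of intermediate function creates a genuine gap. The paper splits $u-u_{kh}=(u-P_h\pi_k u)+(P_h\pi_k u-u_{kh})$, and with $\eta_{kh}=u-P_h\pi_k u$ \emph{both} the time-derivative sum and the jump sum in $-B(\eta_{kh},z_{kh})$ vanish identically: e.g.\ $\eta_{kh,m}^-=u(t_m)-P_h u(t_m)$ is $L^2$-orthogonal to $[z_{kh}]_m\in V_h$, and $(u-P_h\pi_k u,\pa_t z_{kh})_{I_m\times\Om}=(u-\pi_k u,\pa_t z_{kh})_{I_m\times\Om}+(\pi_k u-P_h\pi_k u,\pa_t z_{kh})_{I_m\times\Om}=0$. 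Only the gradient term survives, and it is handled exactly as you describe via the Ritz property and $\Delta_h z_{kh}$. Your choice $\eta_{kh}=u-\pi_k R_h u$ instead leaves the nonzero residuals $((P_h-R_h)u,\pa_t z_{kh})_{I_m\times\Om}$ and $\bigl((P_h-R_h)u(t_m),[z_{kh}]_m\bigr)_\Om$. The first of these can still be absorbed, but the jump residual cannot be closed the way you propose: after H\"older you must bound $\bigl(\sum_m k_m\|(P_h-R_h)u(t_m)\|_{L^p(\Om)}^s\bigr)^{1/s}$ by $C\|(P_h-R_h)u\|_{L^s(I;L^p(\Om))}$, and mere continuity $u\in C(\bar I;L^p(\Om))$ does not give this --- a point value of a continuous (non-polynomial-in-time) function is not controlled by its local $L^s$ average, so the ``local averages plus Jensen'' step you flag as the main obstacle genuinely fails. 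There is no inverse estimate available here because $(P_h-R_h)u$ is not piecewise polynomial in time.

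Two secondary remarks. First, your argument invokes $L^p$-stability of the Ritz projection (in $J_1$ and again to absorb $R_h(u-\pi_k u)$); this is true on convex polyhedra with quasi-uniform meshes but is a nontrivial external result, whereas the paper's decomposition only needs the standard $L^p$-stability of $P_h$. Second, both you and the paper tacitly use that the backward-in-time adjoint solution $z_{kh}$ satisfies the same maximal regularity bounds as the forward problem (via the dual form \eqref{eq:B_dual}); that is fine but worth stating. The clean repair of your proof is simply to switch the intermediate function to $P_h\pi_k u$, after which the problematic jump and time-derivative sums disappear and the theorem follows from the $\Delta_h z_{kh}$ estimate alone.
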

\begin{proof}
Put $e:=u-u_{kh}=(u-P_h\pi_k u)+(P_h\pi_ku-u_{kh}):=\eta_{kh}+\xi_{kh}$.
For $1\le s,p<\infty$, we have
$$
\|e\|_{L^s(I;L^p(\Om))}=\sup_{\psi\in L^{s'}(I;L^{p'}(\Om))\atop{\|\psi\|_{L^{s'}(I;L^{p'}(\Om))}=1}}(e,\psi)_{I\times\Om}, \quad \frac{1}{s}+\frac{1}{s'}=1,\quad \frac{1}{p}+\frac{1}{p'}=1.
$$
For each such $\psi$, consider a dual problem
$$
B(\varphi_{kh}, z_{kh})=(\varphi_{kh}, \psi)_{I\times\Om}.
$$
Thus, we have
$$
(e,\psi)_{I\times\Om}=(\eta_{kh},\psi)_{I\times\Om}+(\xi_{kh},\psi)_{I\times\Om}:=J_1+J_2.
$$
Using the H\"{o}lder inequality, the triangle inequality, the stability of the $L^2$ projection $P_h$ in $L^p(\Om)$ and the approximation properties of $\pi_k$ and $P_h$, we find
\begin{align*}
J_1&\le C\|\eta_{kh}\|_{L^s(I;L^p(\Om))}\|\psi\|_{L^{s'}(I;L^{p'}(\Om))}\le C\|\eta_{kh}\|_{L^s(I;L^p(\Om))}
=C\|u-P_h\pi_k u\|_{L^s(I;L^p(\Om))}\\
&\le C\|u-P_h u\|_{L^s(I;L^p(\Om))}+C\|P_h(u-\pi_k u)\|_{L^s(I;L^p(\Om))}\\
&\le  C\|u-P_h u\|_{L^s(I;L^p(\Om))}+C\|u-\pi_k u\|_{L^s(I;L^p(\Om))}.
\end{align*}
On the other hand, using that $B(u-u_{kh},\chi_{kh})=0$ for any $\chi_{kh}\in \Xkh$, and the properties of the $L^2$ projection and the properties of $\pi_k$,  we obtain
\begin{align*}
J_2&=B(\xi_{kh}, z_{kh})=-B(\eta_{kh}, z_{kh})=\sum_{m=1}^M (\eta_{kh},\pa_t z_{kh})_{I_m\times\Om}-(\na\eta_{kh},\na z_{kh})_{I_m\times\Om}+(\eta_{kh,m}^-,[z_{kh}]_m)_\Om\\
&=\sum_{m=1}^M (u-\pi_k u,\pa_t z_{kh})_{I_m\times\Om}-(\na\eta_{kh},\na z_{kh})_{I_m\times\Om}+(u_m^--(\pi_k u)_m^-,[z_{kh}]_m)_\Om\\
&=-(\na(u-P_h \pi_k u),\na z_{kh})_{I\times\Om}.
\end{align*}
where we used that the first sum vanishes due to \eqref{eq: projection pi_k first} and the sum involving jumps due to \eqref{eq: projection pi_k second}. Using the properties of the Ritz projection, integrating by parts in space, and using the H\"{o}lder inequality and Theorem \ref{thm: maximal parabolic regularity_dGr}, we obtain
\begin{align*}
J_2=-(\na(u-P_h \pi_k u),\na z_{kh})_{I\times\Om}&=-(\na(R_hu-P_h \pi_k u),\na z_{kh})_{I\times\Om}=(R_hu-P_h \pi_k u,\Delta_h z_{kh})_{I\times\Om}\\
&\le C\|P_h(R_hu-\pi_k u)\|_{L^s(I;L^p(\Om))}\|\Delta_h z_{kh}\|_{L^{s'}(I;L^{p'}(\Om))}\\
&\le C\lk\|R_hu-\pi_k u\|_{L^s(I;L^p(\Om))}\|\psi\|_{L^{s'}(I;L^{p'}(\Om))}\\
&\le C\lk\left(\|R_hu- u\|_{L^s(I;L^p(\Om))}+\|u-\pi_k u\|_{L^s(I;L^p(\Om))}\right).
\end{align*}
Combining the estimates for $J_1$ and $J_2$ we obtain the result.
\end{proof}
\begin{corollary}
If the solution $u$ to \eqref{eq: heat equation} satisfies $u\in W^{q+1,s}(I;L^p(\Om))\cap L^{s}(I;W^{r+1,p}(\Om))$  and $\Omega$ such that elliptic $W^{2,p'}$- regularity holds, then there exists a constant $C$ independent of $k$ and $h$ such that
$$
\|u-u_{kh}\|_{L^s(I;L^p(\Om))}\le C\lk \left(k^{q+1}\|u\|_{W^{q+1,s}(I;L^p(\Om))}+h^{r+1}\|u\|_{L^{s}(I;W^{r+1,p}(\Om))}\right),\quad 1\le s,p<\infty.
$$
\end{corollary}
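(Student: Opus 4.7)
The plan is to invoke the previous Theorem (which already decouples the fully discrete error into three projection errors) and then to bound each of the three terms by standard approximation results.

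First, applying the previous theorem gives
\[
\|u-u_{kh}\|_{L^s(I;L^p(\Om))}\le C\lk \Bigl(\|u-\pi_k u\|_{L^{s}(I;L^{p}(\Om))}+\|u-P_hu\|_{L^{s}(I;L^{p}(\Om))}+\|u-R_hu\|_{L^{s}(I;L^{p}(\Om))}\Bigr),
\]
so it only remains to insert three standard estimates into the right-hand side.

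For the temporal term, the projection $\pi_k$ defined in \eqref{eq: projection pi_k} reproduces polynomials of degree $q$ on each subinterval $I_m$ and satisfies a continuity property uniform in $m$. A pointwise-in-space application of the classical one-dimensional approximation estimate for such a projection onto $\Ppol{q}$, integrated in $x$ with Fubini, yields
\[
\|u-\pi_k u\|_{L^s(I_m;L^p(\Om))}\le C\, k_m^{q+1}\|\partial_t^{q+1} u\|_{L^s(I_m;L^p(\Om))},
\]
and summing over $m$ (using the quasi-uniformity-type hypothesis $(ii)$ on the time mesh) gives the desired bound $C k^{q+1}\|u\|_{W^{q+1,s}(I;L^p(\Om))}$.

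For the two spatial terms, I would use the standard $L^p$ approximation properties of the conforming finite element space $V_h$ of degree $r$ on a quasi-uniform mesh. Specifically, the $L^2$-projection $P_h$ is stable in $L^p$ for $1\le p\le \infty$ on quasi-uniform meshes, so
\[
\|u-P_h u\|_{L^p(\Om)}\le C\, h^{r+1}\|u\|_{W^{r+1,p}(\Om)}.
\]
For the Ritz projection $R_h$, the corresponding $L^p$ estimate
\[
\|u-R_h u\|_{L^p(\Om)}\le C\, h^{r+1}\|u\|_{W^{r+1,p}(\Om)}
\]
is classical under the hypothesis $W^{2,p'}$ elliptic regularity on $\Om$ (Schatz--Wahlbin / Rannacher--Scott; in the limiting cases $p=1$ or $p=\infty$ this may introduce a harmless $|\ln h|$ factor, but our bound already carries a $\lk$ in front so we only need a polynomial-in-$h$ rate). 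Raising these two pointwise-in-$t$ estimates to the $s$-th power and integrating in time gives $C h^{r+1}\|u\|_{L^s(I;W^{r+1,p}(\Om))}$ for both terms.

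Combining the three bounds and absorbing constants delivers the claim. The only delicate step is the $L^p$ estimate for the Ritz projection near the endpoints $p \in \{1,\infty\}$, which relies precisely on the $W^{2,p'}$ elliptic regularity assumption singled out in the statement; the temporal estimate for $\pi_k$ and the $L^p$ estimate for $P_h$ are entirely standard on quasi-uniform meshes.
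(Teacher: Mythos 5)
Your proposal is correct and follows exactly the route the paper intends: the corollary is stated without proof as an immediate consequence of the preceding theorem combined with the standard approximation estimates for $\pi_k$, $P_h$, and $R_h$ (the last using the assumed $W^{2,p'}$ elliptic regularity), which is precisely what you spell out. The only superfluous detail is your concern about a $\lh$ factor for the Ritz projection at $p=\infty$, since the corollary restricts to $p<\infty$ anyway.
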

%%%%%%%%%%%%%%%%%%%%%%%%%%%%%%%%%%%%%%%%%%%%%%%%%%%%%%%%%%%%%%%%
\section{Fully discrete results in general norms}\label{sec:general_norms}

For the future references we provide discrete maximal parabolic regularity results in general norms. For example, we use these results to establish pointwise best approximation estimates in~\cite{LeykekhmanD_VexlerB_2015d} for fully discrete Galerkin solutions.

Let $\Om$ be a Lipschitz domain and let $\mathcal{T} = \{\tau\}$ be an arbitrary partition of $\Om$ into cells $\tau$ (triangles, tetrahedrons, quads, or hexahedrons, not necessary quasi-uniform). Let $V_h$ be the set of all functions in $H^1_0(\Om)$ that belong to a certain polynomial space (i.e., $P_r$ or $Q_r$) on each $\tau$.
As before, we define a fully discrete solution $u_{kh} \in \Xkh$  by
\begin{equation}\label{eq: semi fully discrete heat with RHS general}
B(u_{kh},\varphi_{kh})=(f,\varphi_{kh})_{\IOm}+(u_0,\varphi_{kh}^+)_\Om \quad \text{for all }\; \varphi_{kh}\in \Xkh,
\end{equation}
where
\begin{equation} \label{def: space_time general}
\Xkh=\{v_{kh} :\ v_{kh}|_{I_m}\in \Ppol{q}(V_h), \ m=1,2,\dots,M\}, \quad \text{for some }\; q\geq 0,\quad r\geq 1.
\end{equation}
As in the previous section, we introduce the discrete Laplace operator $\Delta_h \colon V_h \to V_h$ by
\[
(-\Delta_h v_h,\chi)_{\Om} = (\nabla v_h,\nabla \chi)_{\Om}, \quad \forall \chi\in V_h,
\]
and the orthogonal $L^2$ projection $P_h \colon L^2(\Om) \to V_h$ by
\[
(P_h v,\chi)_{\Om} = ( v, \chi)_{\Om}, \quad \forall \chi\in V_h.
\]

Let $\vertiii{\cdot}$ be a norm on $V_h$ such that for some $\gamma\in(0,\frac{\pi}{2})$ the following resolvent estimate holds,
\begin{equation}\label{eq: resolvent in Banach space}
\vertiii{(z+\Delta_h)^{-1}\chi} \le \frac{M_h}{\abs{z}} \vertiii{\chi},\quad\text{for}\ z\in \mathbb{C}\setminus \Sigma_{\gamma},
\end{equation}
for all $\chi \in V_h$, where $\Sigma_{\gamma}$ is defined in \eqref{eq: definition of sigma} and
 the constant $M_h$ is independent of $z$.

For quasi-uniform meshes, this assumption is fulfilled for $\vertiii{\cdot}=\norm{\cdot}_{L^p(\Omega)}$ with a constant $M_h \le C$ independent of $h$, see~\cite{LiB_SunW_2015a}, as discussed and exploited above.
For a weighted norm $\vertiii{\cdot}=\norm{\sigma^{\frac{N}{2}}\cdot}_{L^2(\Omega)}$ with the weight $\sigma_{x_0}(x)=\sqrt{|x-x_0|^2+h^2}$ and $M_h \le C \lh$ we established this estimate in~\cite{LeykekhmanD_VexlerB_2015d}, and used the corresponding result to obtain interior (local) pointwise estimates. Moreover, the resolvent estimate \eqref{eq: resolvent in Banach space} is known also to hold in ${L^p(\Omega)}$ norms on a class of non quasi-uniform meshes as well, see~\cite{BakaevNY_CrouzeixM_ThomeeV_2006a}.

\subsection{Smoothing estimates for the homogeneous problem in general norms}

For the homogeneous heat equation~\eqref{eq: heat equation}, i.e.  $f=0$ and its discrete approximation $u_{kh} \in \Xkh$ defined by
\begin{equation}\label{eq: dg(r) homogeneous general}
B(u_{kh},\varphi_{kh}) = (u_0,\varphi_{kh,0}^+) \quad \forall \varphi_{kh} \in \Xkh,
\end{equation}
we have the following  smoothing result.
\begin{theorem}[Fully discrete smoothing estimate in general norms]\label{lemma: homogeneous smoothing dG_r fully discrete general}
Let $\vertiii{\cdot}$ be a norm on $V_h$ fulfilling the resolvent estimate~\eqref{eq: resolvent in Banach space}. Let $u_{kh}$ be the solution  of \eqref{eq: dg(r) homogeneous general}. Then, there exists a constant $C$ independent of $k$ and $h$ such that
$$
\sup_{t\in I_m}\vertiii{\pa_t u_{kh}(t)}+\sup_{t\in I_m}\vertiii{\Delta_h u_{kh}(t)}+k_m^{-1}\vertiii{[u_{kh}]_{m-1}}\le \frac{CM_h}{t_m}\vertiii{P_h u_0},
$$
for $m=1,2,\dots,M$, where $P_h \colon L^2(\Om)\to V_h$ is the orthogonal $L^2$ projection. For $m=1$ the jump term is understood as $[u_{kh}]_0 = u_{kh,0}^+-P_h u_0$.
\end{theorem}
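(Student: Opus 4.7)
The plan is to mirror verbatim the arguments of Theorems \ref{thm: homogeneous smoothing dG_r}, \ref{thm: homogeneous smoothing dG_r jumps}, and \ref{thm: homogeneous smoothing derivative dGr}, replacing the continuous Laplacian $-\Delta$ by its discrete counterpart $-\Delta_h$ and the continuous resolvent estimate \eqref{eq: continuous resolvent} by the abstract resolvent estimate \eqref{eq: resolvent in Banach space}. Since every test function $\varphi_{kh}\in\Xkh$ satisfies $\varphi_{kh,0}^+\in V_h$, we have $(u_0,\varphi_{kh,0}^+)=(P_h u_0,\varphi_{kh,0}^+)$, so $u_{kh}$ depends on $u_0$ only through $P_h u_0$, and we may assume $u_0\in V_h$ throughout the proof.

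With the nodal representation \eqref{eq: formular of dGq of u_k on I_m}, the equation \eqref{eq: dg(r) homogeneous general} becomes the discrete one-step recursion
\begin{equation*}
U^1_l = r_{l,0}(-k_1\Delta_h)P_h u_0,\qquad U^m_l = r_{l,0}(-k_m\Delta_h)U^{m-1}_q,\quad m\ge 2,\quad l=0,1,\dots,q,
\end{equation*}
with the same rational functions $r_{l,0}$ as in \eqref{eq: rational function rlj}. The auxiliary rational-function bounds of Lemmas \ref{lem: raional function estimates}, \ref{lemma: rational with z}, and \ref{lem: raional function estimates one more} carry over with $\Delta$ replaced by $\Delta_h$ and $L^p(\Om)$ replaced by $\vertiii{\cdot}$: their proofs use only a partial-fraction decomposition and one application of the resolvent estimate \eqref{eq: resolvent in Banach space}, so their constants acquire a single factor of $M_h$.

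Each of the three quantities to be estimated is, upon iterating the one-step recursion, of the form $\omega(-\Delta_h)P_h u_0$ for a single composite rational function $\omega$ built from the $r_{l,0}(k_j\cdot)$. The decisive step is the Dunford-Taylor representation
\begin{equation*}
\omega(-\Delta_h) = \frac{1}{2\pi i}\int_\Gamma \omega(-z)(z+\Delta_h)^{-1}\,dz
\end{equation*}
for a suitable Hankel contour $\Gamma$, after which \eqref{eq: resolvent in Banach space} is invoked only once to give
\begin{equation*}
\vertiii{\omega(-\Delta_h)P_h u_0}\le \frac{M_h}{2\pi}\left(\int_\Gamma \frac{|\omega(-z)|}{|z|}|dz|\right)\vertiii{P_h u_0}.
\end{equation*}
The remaining contour integral is bounded by $C/t_m$ via the Eriksson-Johnson-Larsson analysis of $\prod_{j=1}^m r_{q,0}(k_j z)$ referenced in Theorem \ref{thm: homogeneous smoothing dG_r}: for $\omega(-\Delta_h)=\Delta_h u_{kh}(t)$ this is the standard smoothing bound, while for $\omega(-\Delta_h)=k_m^{-1}[u_{kh}]_{m-1}$ one uses property \eqref{eq: proprties of r_l0}, namely $r_{0,0}(\zeta)-1=\zeta\tilde p(\zeta)/\hat p(\zeta)$, to cancel the $k_m^{-1}$, and for $\omega(-\Delta_h)=\pa_t u_{kh}(t)$ one uses $\sum_{l=0}^q\psi_l'(\cdot)=0$ to produce the analogous $\zeta$-factor, exactly as in the proofs of Theorems \ref{thm: homogeneous smoothing dG_r jumps} and \ref{thm: homogeneous smoothing derivative dGr}.

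The main obstacle is precisely avoiding the naive chaining of the rational-function lemmas with the smoothing estimate, which would yield $M_h^2$ instead of $M_h$. By keeping the composite rational function intact and applying the Dunford-Taylor representation only once per quantity, the resolvent estimate \eqref{eq: resolvent in Banach space} enters exactly once, producing the first power of $M_h$ claimed in the statement; the case $m=1$ for the jump is handled directly from $[u_{kh}]_0=(r_{0,0}(-k_1\Delta_h)-\Id)P_h u_0$ together with $t_1=k_1$ and the norm analog of Lemma \ref{lem: raional function estimates one more}.
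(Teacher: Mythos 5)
Your proposal follows the route the paper itself prescribes: the printed ``proof'' of this theorem is a single sentence at the end of Section~\ref{sec:general_norms} stating that the argument is identical to the time-semidiscrete proofs of Section~\ref{sec: dGq} (Theorems~\ref{thm: homogeneous smoothing dG_r}, \ref{thm: homogeneous smoothing dG_r jumps}, \ref{thm: homogeneous smoothing derivative dGr}) with $\Delta$ replaced by $\Delta_h$ and the resolvent estimate~\eqref{eq: resolvent in Banach space} in place of~\eqref{eq: continuous resolvent}, which is exactly your starting point. Where you genuinely diverge is in tracking the power of $M_h$: you correctly observe that a literal transplantation of the semidiscrete proofs of the jump and time-derivative bounds --- which first invoke the general-norm analogue of Lemma~\ref{lemma: rational with z} (one factor of $M_h$, via Lemma~\ref{lem: raional function estimates}) and then the smoothing bound $\vertiii{\Delta_h U_q^{m-1}}\le CM_h t_{m-1}^{-1}\vertiii{P_hu_0}$ (a second factor of $M_h$) --- would deliver $M_h^2$, not the single $M_h$ asserted in the statement. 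Your remedy, assembling each of the three quantities into one composite rational function of $-\Delta_h$ and applying the Dunford--Taylor representation once so that~\eqref{eq: resolvent in Banach space} is invoked exactly once, is sound (the extra factors $\tilde p(k_mz)/\hat p(k_mz)$ and $\tilde p_t(k_mz)/\hat p(k_mz)$ are uniformly bounded on the contour, so the Eriksson--Johnson--Larsson contour analysis goes through unchanged) and is in fact more careful than what the paper offers: it justifies the stated linear dependence on $M_h$, which matters in the intended application where $M_h\sim\lh$. The only caveat is that this extra care is needed only for the jump and $\pa_t$ terms; for $\Delta_h u_{kh}$ the semidiscrete proof already uses a single Dunford--Taylor pass, so there the verbatim transplantation suffices.
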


\subsection{Discrete maximal parabolic estimates for the inhomogeneous problem in general norms}

Now, we consider the inhomogeneous heat equation \eqref{eq: heat equation},  with $u_0=0$ and its discrete approximation $u_{kh} \in \Xkh$ defined by
\begin{equation}\label{eq: dGr nonhomogeneous equation fully2}
B(u_{kh},\varphi_{kh})=(f,\varphi_{kh}),\quad \forall \varphi_{kh}\in \Xkh.
\end{equation}
\begin{theorem}[Discrete maximal parabolic regularity in general norms]\label{lemma: fully discrete_maximal_parabolic general}
Let $\vertiii{\cdot}$ be a norm on $V_h$ fulfilling the resolvent estimate~\eqref{eq: resolvent in Banach space} and let $1 \le s \le \infty$. Let $u_{kh}$ be a solution  of \eqref{eq: dGr nonhomogeneous equation fully2}. Then, there exists a constant $C$ independent of $k$ and $h$ such that
\begin{equation*}
\begin{aligned}
\left(\sum_{m=1}^M\int_{I_m}\vertiii{\pa_t u_{kh}(t)}^sdt\right)^{\frac{1}{s}}+\left(\sum_{m=1}^M\int_{I_m}\vertiii{\Delta_h u_{kh}(t)}^sdt\right)^{\frac{1}{s}}&+\left(\sum_{m=1}^Mk_m\vertiii{k_m^{-1}[u_{kh}]_{m-1}}^s\right)^{\frac{1}{s}}\\
&\le C M_h \lk\left(\int_I\vertiii{P_h f(t)}^sdt\right)^{\frac{1}{s}},
\end{aligned}
\end{equation*}
where $P_h \colon L^2(\Om)\to V_h$ is the orthogonal $L^2$ projection and with obvious notation change in the case of $s=\infty$. For $m=1$ the jump term is understood as $[u_{kh}]_0=u_{kh,0}^+$.
\end{theorem}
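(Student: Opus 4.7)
The plan is to transfer, step by step, the three semidiscrete proofs of Theorems~\ref{thm: maximal parabolic regularity_dGr}, \ref{thm: maximal parabolic regularity_dGr jumps}, and~\ref{thm: time derivative maximal parabolic dGq} into the fully discrete setting with the generic norm $\vertiii{\cdot}$. First, I would exploit the identity $(f,\varphi_{kh})_\Omega=(P_hf,\varphi_{kh})_\Omega$ for $\varphi_{kh}\in V_h$ to recast \eqref{eq: dGr nonhomogeneous equation fully2} in a purely discrete form, so that on each subinterval $I_m$ the nodal values $U_l^m\in V_h$ of $u_{kh}$ satisfy the same recursion as \eqref{eq: dG(r) inhomogeneous one step} with $-\Delta_h$ in place of $-\Delta$ and $(P_hf)_j^m:=k_m^{-1}\int_{I_m}P_hf(t)\psi_j\bigl(\tfrac{t-t_{m-1}}{k_m}\bigr)\,dt$ in place of $f_j^m$. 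Unrolling this recursion yields the closed-form representation
\[
U_l^m=k_mG_l^m+r_{l,0}(-k_m\Delta_h)\sum_{n=1}^{m-1}k_n\!\left(\prod_{j=1}^{m-n-1}r_{q,0}(-k_{m-j-1}\Delta_h)\right)G_q^n,
\]
with $G_l^m=\sum_{j=0}^q r_{l,j}(-k_m\Delta_h)(P_hf)_j^m$, in exact analogy with \eqref{eq: nonhomogeneous represnatation dgr}.

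Next I would establish discrete counterparts of Lemmas~\ref{lem: raional function estimates}, \ref{lemma: rational with z} and~\ref{lem: raional function estimates one more} for $\Delta_h$ acting on $V_h$. Since the partial fraction arguments in those proofs invoke the continuous resolvent estimate \eqref{eq: continuous resolvent} only once per rational function, the same arguments go through with \eqref{eq: resolvent in Banach space} and produce
\[
\vertiii{r(-k\Delta_h)\chi}\le CM_h\vertiii{\chi},\qquad \vertiii{r(-k\Delta_h)\chi}\le CM_h\,k\vertiii{\Delta_h\chi},
\]
each application costing exactly one factor of $M_h$. Crucially, to bound the long chain of resolvents in the formula above I would \emph{not} iterate these lemmas (which would produce $M_h^{m-n}$), but instead interpret
\[
r_{l,0}(-k_m\Delta_h)\prod_{j=1}^{m-n-1}r_{q,0}(-k_{m-j-1}\Delta_h)\,G_q^n
\]
as the nodal value at index $(m,l)$ of the homogeneous discrete solution whose ``initial'' datum at $t_{n-1}^+$ is $G_q^n\in V_h$. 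The fully discrete smoothing result Theorem~\ref{lemma: homogeneous smoothing dG_r fully discrete general}, which has already been proved with a single $M_h$, then furnishes by translation in time
\[
\vertiii{\Delta_h\,r_{l,0}(-k_m\Delta_h)\prod_{j=1}^{m-n-1}r_{q,0}(-k_{m-j-1}\Delta_h)\,G_q^n}\le\frac{CM_h}{t_m-t_{n-1}}\vertiii{G_q^n}.
\]

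Combining this with the discrete rational function bound $\vertiii{G_q^n}\le C\max_j\vertiii{(P_hf)_j^n}$ and the analogous bound $\vertiii{k_m\Delta_h G_l^m}\le C\max_j\vertiii{(P_hf)_j^m}$ gives the termwise estimate
\[
\vertiii{\Delta_hU_l^m}\le CM_h\sum_{n=1}^{m}\frac{k_n}{t_m-t_{n-1}}\max_{0\le j\le q}\vertiii{(P_hf)_j^n},
\]
which is precisely the discrete counterpart of the master inequality in the proof of Theorem~\ref{thm: maximal parabolic regularity_dGr}. The cases $s=\infty$ and $s=1$ are then handled verbatim as in that theorem using \eqref{eq: estimating sum by integral for log} together with the discrete versions of \eqref{eq: estimates for fjm}, namely $\vertiii{(P_hf)_j^n}\le C\sup_{I_n}\vertiii{P_hf(\cdot)}$ and $\sum_n k_n\vertiii{(P_hf)_j^n}^s\le C\int_I\vertiii{P_hf(t)}^s\,dt$; the intermediate $1<s<\infty$ follows by interpolation. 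The jump estimate is obtained from the identity $k_m^{-1}[u_{kh}]_{m-1}=G_0^m+k_m^{-1}(r_{0,0}(-k_m\Delta_h)-\operatorname{Id})U_q^{m-1}$, the property \eqref{eq: proprties of r_l0} for $r_{0,0}-1$, and the discrete version of Lemma~\ref{lemma: rational with z}, reducing to the same sum structure as in Theorem~\ref{thm: maximal parabolic regularity_dGr jumps} and requiring the Hölder trick used there for $1<s<\infty$. The time-derivative estimate follows in the same way via $\sum_{l=0}^q\psi_l'=0$ and $r_{l,0}(0)=1$, exactly as in Theorem~\ref{thm: time derivative maximal parabolic dGq}.

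The main technical obstacle is precisely the one highlighted above: ensuring that $M_h$ enters \emph{linearly} rather than geometrically in the length of the chain of resolvents. Routing the bound for long compositions of rational functions of $\Delta_h$ through the already established fully discrete homogeneous smoothing estimate, rather than through iterated applications of the discrete rational function lemmas, is the essential structural point; once that is done, the remaining computations (index shuffling, the summation $\sum_n k_n/(t_m-t_{n-1})\le C\lk$, and Hölder's inequality for the non-endpoint values of $s$) are structurally identical to the semidiscrete proofs and carry over without modification.
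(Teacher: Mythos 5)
Your proposal is correct and follows essentially the same route as the paper, which simply states that the proof is identical to the semidiscrete arguments of Section~\ref{sec: dGq} once the resolvent estimate~\eqref{eq: resolvent in Banach space} is available: replace $-\Delta$ by $-\Delta_h$, $f_j^m$ by $(P_hf)_j^m$, and $\norm{\cdot}_{L^p(\Om)}$ by $\vertiii{\cdot}$ throughout. Your explicit tracking of how $M_h$ enters only linearly --- by routing the long resolvent chains through the homogeneous smoothing estimate rather than iterating the rational-function lemmas --- is exactly the structure already present in the semidiscrete proofs (via Remark~\ref{remark: homogenous laplaca dG_r}), made appropriately explicit.
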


The proofs of the above two results are identical to the proofs of the corresponding  time discrete results  from Section \ref{sec: dGq}, provided  the resolvent estimate \eqref{eq: resolvent in Banach space} holds.

%%%%%%%%%%%%%%%%%%%%%%%%%%%%%%%%%%%%%%%%%%%%%%%%%%%%%%%%%%%%%%
\begin{acknowledgements}
The authors would like to thank Dominik Meidner and Konstantin Pieper for the careful reading of the manuscript and providing valuable suggestions that help to improve the presentation of the paper.
\end{acknowledgements}

% BibTeX users please use one of
\bibliography{litMaxParReg}
%\bibliography{references}
%\bibliographystyle{spbasic}      % basic style, author-year citations
\bibliographystyle{spmpsci}      % mathematics and physical sciences
%\bibliographystyle{spphys}       % APS-like style for physics
%\bibliography{}   % name your BibTeX data base

% Non-BibTeX users please use
%\begin{thebibliography}{}
%
% and use \bibitem to create references. Consult the Instructions
% for authors for reference list style.
%
%\bibitem{RefJ}
% Format for Journal Reference
%Author, Article title, Journal, Volume, page numbers (year)
% Format for books
%\bibitem{RefB}
%Author, Book title, page numbers. Publisher, place (year)
% etc

%\end{thebibliography}

\end{document}